\numberwithin{equation}{section}
\newcommand{\margincom}[1]{}
\newcommand{\bd}{\begin{displaymath}}
\newcommand{\be}{\begin{equation}}
\newcommand{\ba}{\begin{array}}
\newcommand{\ed}{\end{displaymath}}
\newcommand{\ee}{\end{equation}}
\newcommand{\ea}{\end{array}}
\newcommand{\Prob}{{\rm I\hspace{-0.8mm}P}}
\newcommand{\Exp}{{\rm I\hspace{-0.8mm}E}}
\newcommand{\indicator}[1]{{\mbox{\large\bf$1$}}_{#1}}
\def\N{\mathbb{N}}
\def\Z{\mathbb{Z}}
\def\R{\mathbb{R}}
\newcommand{\hh}{\mbox{{\bf (H)}}}
\newcommand{\h}{{\bf (H)}}
\newtheorem{theorem}{Theorem}[section]
\newtheorem{proposition}[theorem]{Proposition} 
\newtheorem{definition}[theorem]{Definition} 
\newtheorem{lemma}[theorem]{Lemma}
\newtheorem{corollary}[theorem]{Corollary} 
\newtheorem{remark}[theorem]{Remark}
\begin{document}
\title[Flux of TASEP with site disorder]{Quantitative estimates for the flux of TASEP with dilute site disorder}
\author{C. Bahadoran$^1$, T. Bodineau$^2$}
\thanks{We would like to thank A. Sznitman for very useful discussions. We acknowledge the supports of grants ANR-15-CE40-0020-02 and ANR-14-CE25-0011}
\maketitle
$$ \ba{l}
^1\,\mbox{\small Laboratoire de Math\'ematiques Blaise Pascal, Universit\'e Clermont Auvergne, 63178 Aubi\`ere, France} \\
\quad \mbox{\small e-mail:
bahadora@math.univ-bpclermont.fr}\\
^2\,\mbox{\small CMAP, Ecole Polytechnique, CNRS, Universit\'e Paris-Saclay, 91128 Palaiseau, France} \\
\quad \mbox{\small e-mail:
thierry.bodineau@polytechnique.edu}
\ea
$$
\begin{abstract}
We prove that the flux function of the totally asymmetric simple exclusion process (TASEP) with site disorder exhibits a flat segment for sufficiently dilute disorder. 
For high dilution, we obtain an accurate description of the flux.
The result is established under
a decay assumption of the maximum current in finite boxes, which is implied in particular by a sufficiently slow power tail assumption on the disorder distribution near its minimum.
To circumvent the absence of explicit invariant measures, we use an original renormalization procedure and some ideas inspired by homogenization.
\end{abstract}
\noindent\textbf{MSC:} 60K35, 60K37, 82C22\\ \\
\textbf{Keywords:} Totally asymmetric exclusion process; site disorder; flux function; plateau; phase transition; renormalization; homogenization


\section{Introduction}
%
The flux function, also called current-density relation in traffic-flow physics \cite{css}, is the most fundamental object to describe the macroscopic behavior of driven lattice gases.
The paradigmatic model in this class is the totally asymmetric simple exclusion process (TASEP), where particles on the one-dimensional integer lattice hop to the right at unit rate and obey an exclusion rule.
Density $\rho\in(0,1]$ is the only conserved quantity and is associated locally with a 
flux (or current) that is defined as the amount of particles crossing a given site per unit time in a system with homogeneous density $\rho$. For TASEP, the flux function is explicitly given by
\be
\label{f0}
{\color{black} f_0} (\rho)=\rho(1-\rho).
\ee
In the hyperbolic scaling limit \cite{rez}, the empirical particle density field is governed by entropy solutions of the scalar conservation law
\be
\label{burgers_intro}
\partial_t\rho(t,x)+\partial_x  \, {\color{black} f_0}  \big( \rho(t,x) \big)=0,
\ee
with ${\color{black} f_0}$ given by \eqref{f0}. 
%
This kind of result can be extended to a variety of asymmetric models \cite{sep, rez2,bgrs4}, 
but  when the invariant measures are not explicit, little can be said about the flux function. 
Nevertheless, convexity or concavity can be obtained as a byproduct of the variational approach set up in \cite{sep0}, which applies to  totally asymmetric models with state-independent jump rates, like TASEP. However strict convexity or strict concavity, which are related to the absence of a phase transition, require new mathematical ideas to be derived for general driven dynamics.


\medskip

In disordered systems, a phase transition has been first proved  for nearest-neighbor asymmetric site-disordered zero-range processes (ZRP) and its signature is a constant flux on a density interval $[\rho_c,+\infty)$, where $\rho_c$ is the density of the maximal invariant measure. 
The invariant measures  of  the disordered ZRP are explicit so that the flux can be exactly computed and the phase transition precisely located.
The necessary and sufficient condition for the occurrence of a phase transition is a slow enough tail of  the jump rate distribution near its minimum value $r$. Microscopically,  phase transition takes the form of Bose-Einstein condensation \cite{ev}. 
In an infinite system with mean drift to the right, the excess mass is captured by the asymptotically slowest sites at $-\infty$. This was 
proven rigorously in \cite{afgl} for the totally asymmetric ZRP with constant jump rate with respect to the number of particles which is equivalent to TASEP with {\it particlewise} disorder \cite{ks}. This holds also  for nearest-neighbor ZRP with more general  jump rates \cite{bmrs} (see also \cite{fs} for partial results in higher dimension).
The TASEP picture can be interpreted as a traffic-flow model with slow and fast vehicles. The phase transition then occurs on a density interval $[0,\rho_c]$, where the flux is linear with a slope equal to the constant mean velocity of the system. This velocity is imposed by the slowest vehicles at $+\infty$. As one moves ahead, slower an slower vehicles are encountered, followed by a platoon of faster vehicles, and preceded by a gap before the next platoon \cite{ks}. 

\medskip

In this paper, we consider TASEP with i.i.d. site disorder such that the jump rate at each site has a random value
whose distribution is supported in an interval $[r,1]$, with $r\in(0,1)$. 
A flat piece in the flux was observed numerically by physicists  \cite{kru, tb, HS} and interpreted as
the occurence of a phase transition by several heuristic arguments.
Contrary to the disordered ZRP, the invariant measures are no longer explicit in the site-disordered TASEP, which makes the analysis of the flux more challenging.
Before commenting on the  flat segment in the flux, let us mention that 
 the existence of a hydrodynamic limit of the form \eqref{burgers_intro} for TASEP with i.i.d. site disorder was established in \cite{sep}, using last passage percolation (LPP) and variational coupling.
Consequently, the flux function was shown to be concave. More generally, the existence of a limit of the type \eqref{burgers_intro} was obtained in \cite{bgrs4}
for asymmetric attractive systems in ergodic environment, based on the study of invariant measures.
We refer also to \cite{CL, sch1, sch2, sz} for further rigorous results in a different class of disordered SEP.

 \medskip

Recently, Sly gave in \cite{sly}  a short and very elegant proof of the existence of a flat segment 
in the flux for TASEP with general rate distribution. 
The proof in \cite{sly} relies on a clever coupling implemented in the LPP formulation of the TASEP.
In this paper, we develop a different approach, announced in \cite{bb}, based on a renormalization 
method to obtain a precise information on the flux function and on the flat segment  at the price  of 
additional assumptions on the disorder distribution.
We focus on the case of dilute disorder which plays a key role in the physical literature  \cite{kru}
{\color{black} as a sharp transition occurs for any arbitrarily small amount of disorder}.
The jump rate at each site is chosen randomly, according to some dilution parameter $\varepsilon\in[0,1]$, 
so that a site is ``fast'' with probability $1-\varepsilon$,
in which case it has rate $1$, or ``slow'' with probability $\varepsilon$, in which case its rate has some distribution $Q$ with support $(r,1]$ for some $r\in(0,1)$. Under some assumption on the distribution $Q$, and 
for sufficiently diluted disorder, i.e. $\varepsilon$ small enough, we prove 
\textcolor{black}{(Theorem \ref{th_plateau})} 
the existence of a flat segment 
and determine the limiting size of this segment when 
$\varepsilon$ vanishes. Moreover, we prove 
\textcolor{black}{(Theorem \ref{th_dilute_limit})} 
the convergence of the whole flux function to  an explicit function, \textcolor{black}{which exhibits a sharp transition at $\varepsilon=0$.}
We stress the fact that  Sly's argument \cite{sly} does not require any assumption on $Q$ nor on the dilution of the disorder, however the control on the flux in \cite{sly} is less precise for small $\varepsilon$ \textcolor{black}{than the one we provide in Theorems \ref{th_plateau} and \ref{th_dilute_limit}. 
The reason for this is that our renormalization approach is a perturbation of $\varepsilon=0$, while Sly's approach involves a comparison with the homogeneous TASEP corresponding to $\varepsilon=1$. It follows that 
Sly's estimate on the size of the flat segment, is (unlike ours) not optimal for small $\varepsilon$.
}

\medskip

The physical interpretation of the flat segment in the flux \cite{kru} is 
the emergence at different scales of atypical disorder slowing down the particles and leading to traffic jams.
As one moves ahead along the disorder, slowest and slowest regions are encountered, with larger and larger stretches of sites with the minimal rate $r$ (or near this minimal rate). Locally a slow stretch of environment inside a typical region is expected to create a
picture similar to the {\it slow bond} TASEP introduced in \cite{jl}. It is known that a slow bond with an arbitrarily small blockage \cite{bss} restricts the local current. On the hydrodynamic scale \cite{sepslow}, this creates a traffic jam
with a high density of queuing vehicles to the left and a low density to the right, that is an antishock for Burgers' equation.
Renormalization turns the problem into a hierarchy of slow-bond like pictures, where at each scale, the difference between 
the ``typically fast'' and ``atypically slow'' region becomes smaller and smaller.
Slower jams will gradually absorb faster ones so that one expects to see a succession of mesoscopically growing shocks and antishocks.
Some results in this direction were obtained in \cite{GKS} in the case of particle disorder.
Even though, a single slow bond induces a phase transition, it is not clear if the transition will remain in presence of disorder or if the randomness rounds it off  as in equilibrium systems \cite{AW}.

\medskip

Renormalization is often key to analyze multi-scale phenomena in disordered systems; we refer to \cite{Sznitman, vares} for a general overview.
Our renormalization scheme  controls rigorously the multi-scale   slow bond picture described in the  paragraph above. 
A major difficulty compared to the single slow bond is that as one moves to larger scales, the typical maximum current 
associated with a given scale and the maximum current associated with the rare slow regions occurring 
at the same scale converge 
 to the same value $r/4$ (with $r$ the minimal value of the jump rates).
Thus a delicate issue is to show that this small current  difference exceeds the typical order of fluctuations at each scale, so that the slow-bond picture remains  valid at all scales.
To quantify this difference, we rely on an assumption on the decay of the maximum current in a finite box \eqref{condition_h_2}.
This assumption is satisfied  under a condition on the tail of the rate distribution $Q$ near its minimum $r$
 (see Lemma \ref{lemma_assumption}). 
Heuristics suggest that this assumption should be always valid although we have not been able
so far to prove this conjecture.

\medskip

We achieve our renormalization scheme by formulating the problem in wedge LPP framework with columnar disorder and exponential random variables.
In the LPP framework, the phase transition takes the form of a {\it pinning} transition for the optimal path \cite{khh}:
the path gets a better reward  from vertical portions along slow parts of the disorder.
The core of this approach is to obtain a recursion between mean passage times at two successive scales.
Like many shape theorems \cite{mar}, our results partially extends to LPP with more general distributions. 

\medskip

\textcolor{black}{
Another interpretation of our renormalization scheme (see \cite[Section 3.3.2]{bb}) is that it consists in a hierarchy of homogenization problems for scalar conservation laws which approximate the particle system in  blocks of mesoscopic size.
As explained in \cite{bb}, the homogenization of a one-dimensional scalar conservation law with a ``fast'' flux and a ``slow'' flux is easily seen to produce a flat segment as long as the fluxes in each block are bell-shaped (but not necessarily concave).  With this bell-shape assumption on the flux function, the emergence of antishocks, previously established in \cite{sepslow} for TASEP with a slow bond, was shown (see \cite{ba}) to hold in more general asymmetric models with {\it single} localized blockage, {\it even} in the absence of mapping on a percolation problem. Therefore, our renormalization picture {\it suggests} that the emergence of the flat segment should be true for such exclusion-like models with bell-shaped flux. However, we are currently far from being able to implement these ideas microscopically in the absence of a LPP representation. The main reason is that the latter yields fluctuation estimates on the current, which makes the slow-bond (or homogenization) picture effective even when the difference between slow and fast cells tends to zero. 
{\color{black}
Even though the implementation of the renormalization is model dependent, we stress the fact that our renormalization strategy should be useful to study other dynamics in random media. In particular, it was 
implemented in \cite{bt} to control the velocity of interfaces moving in a disordered environnement. 
}
}

\medskip

The paper is organized as follows. In Section \ref{sec:results}, we set up the notation and state our main result.
In Section \ref{sec:lpp}, we formulate the problem in the last passage percolation framework and introduce the reference flux and the passage time functions. In Section \ref{sec:renorm}, we introduce the renormalization procedure and describe the main steps of the proof.
In Section \ref{subsec:outline}, we prove a recurrence which links the passage time bounds of two successive scales. This is the heart of the renormalization argument.
In Section \ref{sec:other_proofs}, we study this recurrence in detail and show that it propagates the bounds we need from one scale to another.
In Section \ref{sec: prop passage time}, we establish an important fluctuation estimate needed in Section \ref{subsec:outline}. Finally, the proofs of our main theorems are completed in Section \ref{sec:completion}.

\section{Notation and results}\label{sec:results}

\subsection{TASEP with site disorder}
Let $\N:=\{0,1,\ldots\}$ (resp. $\N^*:=\{1,2,\ldots\}$) be the set of nonnegative (resp. positive) integers.
The disorder is modeled by  $\alpha=(\alpha(x):\,x\in\Z)\in{\bf A}:=[0,1]^\Z$, \textcolor{black}{an i.i.d.} sequence of positive bounded random variables. The precise distribution of $\alpha$ will be defined in 
Section \ref{sec: results}.
For a given realization of $\alpha$, we consider the TASEP on $\Z$ with site disorder $\alpha$. 
The dynamics is defined as follows.
A site $x$ is occupied by at most one particle which may jump with rate $\alpha(x)$ to site $x+1$ if it is empty.
A particle configuration on $\Z$ is of the form $\eta=(\eta(x):\,x\in\Z)$, where for $x\in\Z$, 
$\eta(x)\in\{0,1\}$ is the number of particles at $x$. The state space is ${\bf X}:=\{0,1\}^\Z$. The generator of the process is given by
\be\label{generator}
L^\alpha\textcolor{black}{\varphi}(\eta)=\sum_{x\in\Z}\alpha(x)\eta(x)[1-\eta(x+1)]\left[
\textcolor{black}{\varphi}\left(\eta^{x,x+1}\right)-\textcolor{black}{\varphi}(\eta)
\right],
\ee
\textcolor{black}{for any function $\varphi$ on $\bf X$ depending on finitely many sites
(the set of such functions, called cylinder functions, is a core for the generator $L^\alpha$),}
where $\eta^{x,x+1}=\eta-\delta_x+\delta_{x+1}$ denotes the new configuration after a particle has jumped from $x$ to $x+1$, \textcolor{black}{and $\delta_x$ is the configuration that is empty outside site $x$ and has a particle at $x$.}
\medskip

\noindent
\textbf{Current and flux function.} 
%
%
%
%
%
%
The macroscopic flux function $f$ can be defined as follows.
\textcolor{black}{For $\eta\in\bf X$}, we denote by $J^\alpha_x(t,\textcolor{black}{\eta})$ the rightward current across site $x$ up to time $t$, that is
the number of jumps from $x$ to $x+1$ up to time $t$,  in the TASEP $(\eta_t^\alpha)_{t\geq 0}$ \textcolor{black}{with generator \eqref{generator}} starting from initial state $\textcolor{black}{\eta}$.
%
%
For $\rho\in[0,1]$, let $\eta^\rho$ be an initial particle configuration with  asymptotic particle density $\rho$ in the following sense:
\be
\label{uniform_profile}
\lim_{n\to\infty} \frac{1}{n} \sum_{x=0}^n \eta^\rho(x)
=
\rho
=\lim_{n\to\infty} \frac{1}{n} \sum_{x=-n}^0 \eta^\rho(x).
\ee
%
%
We then set
\be
\label{flux_current}
f(\rho) := \lim_{t\to\infty}\frac{1}{t}J^\alpha_x(t,\eta^\rho),
\ee
where the limit is understood in probability with respect to the law of the quenched process.
\textcolor{black}{It is indeed shown in \cite{sep} that the function $f$ in \eqref{flux_current} exists for almost every realization of the disorder $\alpha$, and does not depend on the latter, nor on the choice of  initial configurations $\eta^\rho$ satisfying \eqref{uniform_profile}}.
Other definitions of the flux and the proof of their equivalence with the above definition can be found in \cite{bb}.
%
%
%
%

It is shown in \cite{sep} that $f$ is a concave function, see \eqref{flux_legendre} below. 
It was conjectured in \cite{tb} that for i.i.d. disorder, the flux function $f$ exhibits a flat segment, that is an interval $[\rho_c,1-\rho_c]$ (with $0\leq\rho_c<1/2$) on which $f$ is constant (see Figure \ref{fig: flux truncated}). The proof of \cite{sly} uses a comparison with a homogeneous rate $r$ TASEP.
We introduce a different approach, based on renormalization and homogenization ideas, viewing the disordered model as a perturbation of a homogenous rate $1$
TASEP. This yields \textcolor{black}{(see Theorems \ref{th_plateau} and \ref{th_dilute_limit} below)} not only an independent proof of the existence of a flat segment, but also optimal estimates when the density of defects is small enough. 
%
%
%
%
%
\subsection{ The flux and flat segment for rare defects}
\label{sec: results}
From now on, we consider i.i.d. disorder such that the support of the distribution of $\alpha(x)$ is contained in $[r,1]$, where {\color{black} $r \in ]0,1[$} is  the infimum of this support.
Then, \textcolor{black}{as stated in the following proposition}, the flux is bounded from above by $r/4$.
%
%
%
\begin{proposition}
\label{prop_max}{black}
The maximum value of the flux function is given by 
$$\max_{\rho\in[0,1]}f(\rho) = r/4.$$
\end{proposition}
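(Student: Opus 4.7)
The plan is to establish the matching lower and upper bounds for $\max f$ separately.

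\textbf{Lower bound $\max_\rho f(\rho) \geq r/4$.} I would couple the disordered TASEP with a homogeneous rate-$r$ TASEP via a Poisson-thinning graphical construction: at each site $x$, a Poisson clock of rate $\alpha(x) \geq r$ drives the disordered process, and each of its ticks is retained for the rate-$r$ process with probability $r/\alpha(x)$. Starting both from the Bernoulli product measure of density $1/2$, the standard TASEP monotonicity/particle-ordering argument under this coupling shows that the current at any site in the disordered process dominates the one in the homogeneous process almost surely. Since the rate-$r$ homogeneous TASEP admits $\mathrm{Bernoulli}(1/2)$ as invariant measure with stationary current $r/4$, dividing $J_0^\alpha(t)\geq J_0^r(t)$ by $t$ and sending $t\to\infty$ gives $f(1/2)\geq r/4$.

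\textbf{Upper bound $\max_\rho f(\rho) \leq r/4$.} The main content of the proposition is a bottleneck argument using that $r$ is the infimum of the support of $\alpha(0)$. For any $\varepsilon>0$ we have $\mathbb{P}(\alpha(0)\leq r+\varepsilon)>0$, so by the i.i.d. hypothesis and Borel--Cantelli, almost surely there exist arbitrarily long intervals $I_N=[a_N,a_N+N]$ on which $\alpha(x)\leq r+\varepsilon$ for every $x\in I_N$. On such a slow window the disordered dynamics is dominated, bond by bond, by an open-boundary rate-$(r+\varepsilon)$ TASEP with reservoirs tuned to the ambient densities on either side; the classical fact that any open-boundary rate-$\beta$ TASEP supports current at most $\beta/4$ (the maximum-current phase of its phase diagram, itself a consequence of the Bernoulli invariant measures of the infinite-volume homogeneous TASEP) then yields $f(\rho)\leq (r+\varepsilon)/4 + o_N(1)$. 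Letting $N\to\infty$ and then $\varepsilon\to 0$ gives $f(\rho)\leq r/4$ for every $\rho\in[0,1]$.

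\textbf{Main obstacle.} The delicate step is making rigorous the bond-by-bond domination on the slow window $I_N$, since the disordered TASEP has no explicit invariant measure there. One natural route is to couple the infinite disordered process with a finite open rate-$(r+\varepsilon)$ TASEP on $I_N$ via Poisson thinning, absorbing the effect of the random boundary fluxes into the $o_N(1)$ term through hydrodynamic/ergodic control of the ambient density. An alternative, likely closer to the paper's perspective, is to pass through the LPP formulation introduced in Section~\ref{sec:lpp}: the time constant of the LPP restricted to a column of rates $\leq r+\varepsilon$ is bounded below by its homogeneous rate-$(r+\varepsilon)$ analogue, and the Legendre-type relation between this time constant and $f$ translates directly into the bound $f\leq (r+\varepsilon)/4$.
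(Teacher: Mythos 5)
Your lower bound is correct and essentially identical to the paper's: couple with a homogeneous rate-$r$ TASEP, use monotonicity of the current in the rates, and evaluate at density $1/2$.

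For the upper bound you correctly identify the key mechanism --- arbitrarily long "slow windows" where $\alpha \leq r+\varepsilon$, guaranteed a.s. by the i.i.d.\ hypothesis and the fact that $r$ is the essential infimum --- but the paper closes the argument in a way quite different from either of your proposed routes, and this difference is the crux. You propose to compare the disordered process on the slow window to a finite open-boundary TASEP "with reservoirs tuned to the ambient densities," absorbing boundary effects into an $o_N(1)$ term. This is exactly the part that is hard to make rigorous, because the invariant measure of the disordered process is not explicit, so there is no clean description of what the "ambient density" feeding the slow window is; the $o_N(1)$ error is not controlled by any result you cite. The paper sidesteps this entirely. It starts from the identity $f(\rho)=\int j_x^\alpha\, d\nu^\alpha_\rho$ for the (non-explicit) invariant measures $\nu^\alpha_\rho$, spatially averages this identity over the \emph{middle third} $[a_N,b_N]$ of a slow window, and forms the Ces\`aro-averaged measure $\mu_N=\frac{1}{b_N-a_N+1}\sum_{x=a_N}^{b_N}\tau_x\nu^\alpha_\rho$. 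Any weak limit $\mu^\star$ is automatically shift-invariant, and --- because the rates on the middle third are within $\varepsilon_N\to 0$ of the constant $r$ while the window length diverges --- $\mu^\star$ is also stationary for the \emph{homogeneous} TASEP. Liggett's characterization of translation-invariant stationary measures then forces $\mu^\star$ to be a mixture of Bernoulli product measures, giving $\int \tilde j\, d\mu^\star \leq 1/4$ and hence $f(\rho)\leq r/4$. Thus the heavy lifting is done by a soft ergodic-averaging argument plus a classification theorem, with no boundary conditions or reservoir densities to control. Your alternative LPP sketch ("restrict to a vertical column in the slow window") could in principle work, but as stated it glosses over the fact that the slow window sits at a distance $x_N\to\infty$ from the origin of the passage time, so one must also control the cost of reaching it; this is not spelled out and would need an additional argument.
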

This result comes from the fact that the current of the disordered system is limited by atypical large 
stretches  with jump rates close to $r$. On these atypical regions, the system behaves as a homogeneous rate $r$ TASEP
which has maximum current $r/4$.
A detailed proof can be found in Appendix \ref{appendix_max}.
%
%
For our main results, we formulate additional assumptions on the distribution of the environment. 
We assume that the disorder is a perturbation of the homogeneous case with rate 1.
Let 
$Q$ be a probability measure on {\color{black}  $[r,1]$}, such that $r$ is the infimum of the support of $Q$. 
Given $\varepsilon\in(0,1)$ a ``small'' parameter, 
we define the distribution of $\alpha(x)$ by
\be\label{law_disorder}
Q_\varepsilon = (1-\varepsilon) \delta_1 + \varepsilon Q .
\ee
The law of $\alpha = (\alpha(x), x \in \Z)$ is the product measure with marginal $Q_\varepsilon$ at each site
$$
\mathcal P_\varepsilon(d\alpha):=\bigotimes_{x\in\Z}Q_\varepsilon[d\alpha(x)].
$$
Expectation with respect to $\mathcal P_\varepsilon$ is denoted by $\mathcal E_\varepsilon$. 
\textcolor{black}{The decomposition \eqref{law_disorder} has a natural interpretation if the support of $Q$ is bounded away from $1$, which, however, need not be assumed. Namely,}
each site is chosen independently at random to be, 
with probability $1-\varepsilon$, a ``fast'' site with normal rate $1$, \textcolor{black}{and} with probability $\varepsilon$ to be a ``defect'' with rate distribution $Q$. 
Thus $\varepsilon$ is the mean density of defects. 
For example if $Q = \delta_r$, then the defects are slow bonds with rate $r < 1$.\\ \\
Let us denote by $f_\varepsilon$ the flux function \eqref{flux_current} for the disorder distribution
{\color{black} $Q_\varepsilon$}.
%
%
\noindent
\textcolor{black}{
We 
%
then define {\color{black} the edge of the flat segment as}
\be
\label{def_critical_density}
\rho_c(\varepsilon):=\inf\left\{\rho\in\left[0,\frac{1}{2}\right]: 
\qquad
f_\varepsilon  \equiv  \frac{r}{4}\mbox{ on }[\rho,1-\rho]\right\}.
\ee
It follows from Proposition \ref{prop_max} that $\rho_c(\varepsilon)\leq 1/2$. 
It is also known (see \cite{sep}) that $f_\varepsilon$ is symmetric with respect to $\rho=1/2$, i.e.
\be
\label{symmetry_f}
\forall \rho\in[0,1],\quad f_\varepsilon (1-\rho)= f_\varepsilon (\rho).
\ee
Therefore, \eqref{def_critical_density} is equivalent to saying that the flat segment of $f_\varepsilon$ is the interval 
$[\rho_c(\varepsilon),1-\rho_c(\varepsilon)]$. 
\textcolor{black}{
The following monotonicity properties with respect to $\varepsilon$ can be established (see Appendix \ref{app:std}) by standard coupling arguments. 
\begin{proposition}
\label{prop_monotone}
{\color{black} The macroscopic parameters are monotonous with respect to the dilution:}
\begin{itemize}
\item[(i)] The function $\varepsilon\mapsto f_\varepsilon(\rho)$ is nonincreasing; 
\item[(ii)] the function $\varepsilon\mapsto \rho_c(\varepsilon)$ is nondecreasing.
\end{itemize}
\end{proposition}
}
%
%
%
%
%
Our main results (Theorems \ref{th_plateau} and \ref{th_dilute_limit} below)
hold under a general assumption \hh\mbox{ on} the disorder distribution $Q$ which will 
be stated and explained in the next subsection.
Concretely,  as will be shown there, assumption \hh\mbox{ is} easily implied by the following simple tail assumption:
\begin{lemma}\label{lemma_assumption}
Assumption \hh\mbox{ }holds if the following condition is satisfied:
\be
\label{tail_assumption}
\mbox{for some }\kappa>1, \qquad Q \big( [r,r+u) \big)=O(u^\kappa)\mbox{ as }u\to 0^+.
\ee
\end{lemma}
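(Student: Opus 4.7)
The plan is to derive Assumption \hh{} from \eqref{tail_assumption} through a direct probabilistic estimate on the minimum jump rate in a finite box. While the precise form of \hh{} (given in \eqref{condition_h_2} and alluded to in the introduction) is a quantitative decay estimate on the maximum current in a finite box toward the limit value $r/4$ of Proposition \ref{prop_max}, the underlying mechanism is the same in any reasonable formulation: atypical stretches of almost-minimal rates act as bottlenecks, and their rarity under \eqref{tail_assumption} translates into a quantitative gap between the disordered and asymptotic maximum currents.

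First, I would compare, inside a box $B_L$ of size $L$, the site-disordered TASEP with the homogeneous TASEP of rate $r_L\eqdef\min_{x\in B_L}\alpha(x)$. Since $\alpha(x)\geq r_L$ for every $x\in B_L$, the basic monotone coupling for TASEP (equivalently, monotonicity in the weights of the wedge LPP formulation of Section \ref{sec:lpp}) shows that the maximum current in the disordered box is bounded below by that of the homogeneous rate-$r_L$ TASEP, which equals $r_L/4$ up to finite-size corrections.

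Second, I would estimate $r_L-r$ using \eqref{tail_assumption}. Since $Q_\varepsilon=(1-\varepsilon)\delta_1+\varepsilon Q$, we have
\bd
\mathcal P_\varepsilon\big[\alpha(x)\in[r,r+u)\big]=\varepsilon\,Q\big([r,r+u)\big)\leq C\varepsilon u^\kappa.
\ed
A union bound over the $L$ sites of $B_L$ yields $\mathcal P_\varepsilon[r_L<r+u]\leq C\varepsilon L u^\kappa$, so with $u_L\eqdef(2C\varepsilon L)^{-1/\kappa}$ one obtains
\bd
\mathcal P_\varepsilon\big[r_L\geq r+u_L\big]\geq 1/2.
\ed
Consequently, on an event of probability at least $1/2$, the maximum current in $B_L$ exceeds $r/4$ by at least $u_L/4=\Omega(L^{-1/\kappa})$; integrating, or boosting this bound to higher probability by iterating over disjoint subblocks of $B_L$, then yields the averaged lower bound required by \hh.

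The main obstacle is bookkeeping: matching the precise quantitative form of \hh{} in \eqref{condition_h_2}, and in particular verifying that the polynomial rate $L^{-1/\kappa}$ produced above beats both the deterministic finite-size corrections of the homogeneous TASEP and the typical order of fluctuations that appear at each stage of the renormalization scheme of Sections \ref{sec:renorm}--\ref{sec:other_proofs}. The requirement $\kappa>1$ in \eqref{tail_assumption} (rather than merely $\kappa>0$) is presumably exactly what guarantees $1/\kappa<1$, so that the random excess rate $u_L\sim L^{-1/\kappa}$ dominates the homogeneous $O(1/L)$-type corrections and produces a genuine current gap that survives at every scale of the renormalization hierarchy.
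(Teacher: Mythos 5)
Your first two steps coincide with the paper's own argument: set $\alpha^\star := \min_{x\in[0,N]}\alpha(x)$, use the monotone coupling together with \eqref{decay_r} to get $j_{\infty,[0,N]}(\alpha_{[0,N]}) \geq \alpha^\star/4$, and control the minimum by a union bound against the tail of $Q$. The gap is in how you finish. Assumption \hh{} as stated in \eqref{condition_h_2} is a high-probability statement, namely that $\mathcal P_\varepsilon\big(j_{\infty,[0,N]} \leq r/4 + a N^{-b/2}\big) \leq c N^{-\beta}$ for some fixed $b<2$ and $\beta>0$; it is not a median-type or averaged statement. Tuning $u_L := (2C\varepsilon L)^{-1/\kappa}$ so that the union bound equals $1/2$ only yields a probability-$1/2$ event, and your proposed remedy of ``boosting by iterating over disjoint subblocks, or integrating'' does not, as sketched, convert this into a $cN^{-\beta}$ bound (the maximal current of a box is not a simple monotone function of the currents of its disjoint subblocks, so independence of subblocks does not straightforwardly help). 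In fact no boosting is needed: the union bound you already wrote suffices once $u$ is scaled as a power of $N$ rather than tuned to probability $1/2$. Taking $u = a'N^{-b/2}$ for any $b$ with $2/\kappa < b < 2$ gives
\bd
\mathcal P_\varepsilon\big(\alpha^\star < r + a'N^{-b/2}\big) \leq N\,Q\big([r, r+a'N^{-b/2})\big) = O\big(N^{1-\kappa b/2}\big) = O(N^{-\beta}),\qquad \beta := \kappa b/2 - 1 > 0,
\ed
which is exactly \eqref{condition_h_2}. This is also where $\kappa>1$ enters precisely: the admissible interval $(2/\kappa, 2)$ for $b$ is nonempty if and only if $\kappa>1$. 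Your closing heuristic — that $\kappa>1$ makes the random excess $\sim L^{-1/\kappa}$ decay more slowly than the homogeneous $1/L$ finite-size scale, matching the requirement $b<2$ in \hh{} — is pointing in the right direction, but the mechanism is the exponent in the union bound above, not a competition with the deterministic correction in \eqref{decay_r} (that correction is $+C/N$, i.e.\ favorable, so there is nothing to dominate there).
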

}
\begin{theorem}
\label{th_plateau}
%
%
Under assumption \hh, 
there exists $\varepsilon_0>0$ such that $\rho_c(\varepsilon)<\frac{1}{2}$ for every $\varepsilon<\varepsilon_0$. Furthermore, the size of the flat segment is explicit when $\varepsilon$ vanishes:
\be\label{limit_plateau}
\lim_{\varepsilon\to 0}\rho_c(\varepsilon)= \rho_c(0),
\ee
with
\be
\label{def_limit_plateau}
\rho_c(0):=\frac{1}{2}\left(1-\sqrt{1-r}\right).
\ee
\end{theorem}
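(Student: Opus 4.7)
The plan is to split Theorem \ref{th_plateau} into its easy and hard halves. Since $\rho_c(0)=\tfrac{1}{2}(1-\sqrt{1-r})<\tfrac{1}{2}$ whenever $r<1$, the assertion $\rho_c(\varepsilon)<1/2$ will be a consequence of the convergence \eqref{limit_plateau}, so it is enough to establish the two-sided bound $\liminf_{\varepsilon\to 0}\rho_c(\varepsilon)\geq \rho_c(0)$ and $\limsup_{\varepsilon\to 0}\rho_c(\varepsilon)\leq\rho_c(0)$.

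For the lower bound, I would use a basic coupling between the quenched TASEP in environment $\alpha\leq 1$ and the homogeneous rate-$1$ TASEP; this yields $f_\varepsilon(\rho)\leq\rho(1-\rho)$ for every $\rho\in[0,1]$ and every $\varepsilon$. Since $\rho_c(0)$ is by definition the unique point of $[0,1/2]$ at which $\rho(1-\rho)=r/4$, every $\rho\in[0,\rho_c(0))$ satisfies $f_\varepsilon(\rho)<r/4$, and by symmetry \eqref{symmetry_f} the same inequality holds for $\rho\in(1-\rho_c(0),1]$. Therefore the flat segment is always contained in $[\rho_c(0),1-\rho_c(0)]$, which forces $\rho_c(\varepsilon)\geq \rho_c(0)$ for every $\varepsilon$.

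For the upper bound, combining concavity of $f_\varepsilon$ (from \cite{sep}), the symmetry \eqref{symmetry_f}, and Proposition \ref{prop_max}, it suffices to exhibit, for every $\delta>0$, a single $\rho_*\in(\rho_c(0),\rho_c(0)+\delta]$ with $f_\varepsilon(\rho_*)=r/4$ as soon as $\varepsilon$ is small enough: such an equality forces $f_\varepsilon\equiv r/4$ on $[\rho_*,1-\rho_*]$ by concavity and symmetry, hence $\rho_c(\varepsilon)\leq\rho_*$. Through the correspondence between the flux and the wedge LPP shape function developed in Section \ref{sec:lpp}, this amounts to proving a matching upper bound on the mean passage time $\mathcal E_\varepsilon[T^\varepsilon(M,N)]/N$ along a ray $(M,N)$ whose slope is dictated by $\rho_c(0)+\delta$, with the bound coinciding with the value that the rate-$r$ homogeneous LPP shape function assigns to that ray. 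This is precisely the quantitative estimate delivered by the renormalization machinery of Sections \ref{sec:renorm}--\ref{sec: prop passage time}, whose recursion between consecutive scales (Section \ref{subsec:outline}) propagates, under assumption \hh, a rate-$r$ LPP upper bound over an opening of slopes whose width tends to the full interval associated with $[\rho_c(0),1-\rho_c(0)]$ as $\varepsilon\to 0$.

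The main obstacle, and the reason why the present argument is substantially more delicate than the existence proof of \cite{sly}, is that at densities close to $\rho_c(0)$ the typical and atypical maximum currents differ by an amount that shrinks as one moves away from $1/2$. In LPP language, an optimal path with slope just above the critical one must already find it profitable to pin on a nested hierarchy of atypically slow regions whose rates approach $r$, and the advantage it gains at each scale must beat the typical fluctuations of the passage time at that scale. Assumption \hh, which is implied by the polynomial tail condition \eqref{tail_assumption} through Lemma \ref{lemma_assumption}, is exactly what guarantees a quantitative decay of the maximum current in finite boxes strong enough to keep this margin positive through the whole hierarchy of scales; once this is secured, the resulting passage-time estimate is, via the LPP--flux duality, equivalent to the flat identity $f_\varepsilon\equiv r/4$ on $[\rho_c(0)+\delta,1-\rho_c(0)-\delta]$ needed to close the proof.
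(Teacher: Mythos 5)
Your proposal is correct and follows essentially the same strategy as the paper: the lower bound $\liminf_{\varepsilon\to 0}\rho_c(\varepsilon)\geq\rho_c(0)$ comes from the coupling with rate-$1$ TASEP (the paper phrases it as a contradiction at the end of Section \ref{conclude}), and the upper bound comes from the renormalization machinery (Sections \ref{sec:renorm}--\ref{sec:completion}) delivering $\tau_\varepsilon\leq\tau^{\overline{\rho}_c(\varepsilon),r/4}$ with $\limsup_{\varepsilon\to 0}\overline{\rho}_c(\varepsilon)\leq\rho_c(0)$, combined with concavity, symmetry \eqref{symmetry_f} and Proposition \ref{prop_max}. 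One small imprecision in your description of the hard step: the target passage-time bound $\tau^{\rho,r/4}(x,y)=\frac{\rho x^+-(1-\rho)x^-+y}{r/4}$ is not the rate-$r$ homogeneous LPP shape $\frac{1}{r}(\sqrt{x+y}+\sqrt{y})^2$, but rather the tangent to the \emph{rate-$1$} shape $(\sqrt{x+y}+\sqrt{y})^2$ with $\partial_y$-slope $4/r$, i.e.\ the linear envelope corresponding to a flat flux at height $r/4$; this affine function lies strictly below the rate-$r$ shape for all finite $y$, and the two only agree asymptotically as $y\to\infty$, so matching the rate-$r$ shape on a fixed ray is neither what is proved nor what is needed.
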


\begin{remark}
It follows from \eqref{limit_plateau} that the limiting value of the length $1-2\rho_c(\varepsilon)$ of the flat segment is $\sqrt{1-r}$.
The result of \cite{sly} is that $\rho_c(\varepsilon)<1/2$ for any $\varepsilon\in(0,1)$, without requiring 
{\color{black} assumption \hh or} \eqref{tail_assumption}.
The proof of \cite{sly}  yields 
\textcolor{black}{the upper bound 
\be\label{upper_bound_sly}
\rho_c(\varepsilon)\leq\frac{1}{2}-\frac{1}{4}\mu r,\quad\mbox{where}\quad
\mu:=\frac{1}{r}-\Exp\left[\frac{1}{\alpha(0)}\right]
\ee
When the expectation in \eqref{upper_bound_sly} is computed from the disorder distribution \eqref{law_disorder}, it yields (we now denote it with an index $\varepsilon$ to emphasize its dependence on $\varepsilon$)
$$
\Exp_\varepsilon\left[\frac{1}{\alpha(0)}\right]=1-\varepsilon+\varepsilon\int\frac{1}{\alpha}Q(d\alpha)
$$
which converges to $1$ as $\varepsilon\to 0$ to $0$. Thus, in the dilute limit, the quantity $\mu=\mu_\varepsilon$ in \eqref{upper_bound_sly} converges to $(1/r)-1$,  and the upper bound on $\rho_c(\varepsilon)$  converges to $(1+r)/4$, which is strictly bigger than $\rho_c(0)$ in \eqref{limit_plateau}. Correspondingly, in the dilute limit, \eqref{upper_bound_sly} gives a lower bound $(1-r)/2$ on the length of the flat segment, which is strictly smaller that $\sqrt{1-r}$.
}
%
%
%
%
%
\end{remark}
\textcolor{black}{
The next theorem characterizes the dilute limit \cite{kru} of the whole} flux function.
Let
\be
\label{def_d0}
\forall \rho\in[0,1], 
\qquad
\textcolor{black}{f^{0}}(\rho) := \min\left[\rho(1-\rho),\frac{r}{4}\right].
\ee
\begin{theorem}
\label{th_dilute_limit}
%
%
%
Under assumption \hh, uniformly over $\rho\in[0,1]$, one has 
\be
\label{dilute_limit}
\lim_{\varepsilon\to 0}f_\varepsilon(\rho)= \textcolor{black}{f^{0}}(\rho).
\ee
\end{theorem}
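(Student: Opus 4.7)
The plan is to establish the upper bound $f_\varepsilon \le f_0$ uniformly in $\varepsilon$, then the pointwise lower bound $\liminf_{\varepsilon\to 0} f_\varepsilon(\rho) \ge f_0(\rho)$ on $[0,1]$, and finally to upgrade pointwise convergence to uniform convergence by concavity.

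For the upper bound I combine two comparisons. Transporting to the wedge LPP of Section \ref{sec:lpp}, where each column $x$ carries independent exponential weights of parameter $\alpha(x)$, the inequality $\alpha(x)\le 1$ implies that these weights stochastically dominate rate-$1$ exponentials, so the disordered passage times dominate their homogeneous counterparts path by path and $\gamma_\varepsilon \ge \gamma_1$. The shape-flux correspondence used in \cite{sep} then yields $f_\varepsilon(\rho) \le \rho(1-\rho)$. Combined with Proposition \ref{prop_max}, this gives $f_\varepsilon(\rho) \le \min(\rho(1-\rho), r/4) = f_0(\rho)$.

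For the lower bound, the plateau range is immediate from Theorem \ref{th_plateau}: since $\rho_c(\varepsilon)\to\rho_c(0)$, any $\rho\in(\rho_c(0),1-\rho_c(0))$ eventually lies in the flat segment of $f_\varepsilon$, so $f_\varepsilon(\rho)=r/4=f_0(\rho)$. Using the symmetry $f_\varepsilon(\rho)=f_\varepsilon(1-\rho)$ reduces the remaining task to $\rho\in[0,\rho_c(0)]$, where $f_0(\rho)=\rho(1-\rho)$. This is the main obstacle: concavity of $f_\varepsilon$ together with $\rho_c(\varepsilon)\to\rho_c(0)$ delivers only the chord bound $f_\varepsilon(\rho)\ge \rho \, (r/4)/\rho_c(\varepsilon) \to \rho(1-\rho_c(0))$, which is strictly below $\rho(1-\rho)$ on $(0,\rho_c(0))$. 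To close this gap I would reuse the passage-time estimates produced by the renormalization recurrence of Section \ref{subsec:outline}: in the sub-plateau LPP directions (those for which $\rho(1-\rho)<r/4$), optimal paths gain nothing by pinning on rare slow columns, since even the maximal current $r/4$ through such columns already exceeds the free-flow target $\rho(1-\rho)$. Consequently the mean passage-time bounds at every scale collapse to the homogeneous rate-$1$ shape $(\sqrt{x}+\sqrt{y})^2$, and the shape-flux correspondence forces $f_\varepsilon(\rho)\to\rho(1-\rho)$ for $\rho\in[0,\rho_c(0)]$.

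Once pointwise convergence has been established on $[0,1]$, uniform convergence is automatic: the $f_\varepsilon$ are concave on the compact interval $[0,1]$ and the limit $f_0$ is continuous, so a standard Dini-type argument for concave families upgrades pointwise to uniform convergence on $[0,1]$.
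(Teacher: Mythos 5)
Your overall plan is sound and mirrors the paper's strategy at a high level: establish the uniform upper bound $f_\varepsilon\le f_0$, then prove pointwise convergence from below by splitting at $\rho_c(0)$, and finally upgrade to uniform convergence by concavity. Your upper-bound step (monotone coupling with rate-$1$ TASEP gives $f_\varepsilon\le\rho(1-\rho)$, Proposition \ref{prop_max} gives $f_\varepsilon\le r/4$) is correct, and your treatment of the plateau range via Theorem \ref{th_plateau} is in fact a \emph{cleaner shortcut} than the paper's: the paper instead derives the sub-plateau convergence first, observes that $f_\varepsilon\to\rho(1-\rho)$ at the plateau endpoints where $\rho(1-\rho)=r/4$, and then deduces the plateau range from concavity and $\max f_\varepsilon=r/4$. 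Your Dini-type argument for concave families is also standard and correct (pointwise convergence of concave functions on a compact interval to a continuous limit is uniform).

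The substantial gap is in the sub-plateau range $\rho\in(0,\rho_c(0))$, which you correctly identify as the crux. You write that ``the mean passage-time bounds at every scale collapse to the homogeneous rate-$1$ shape'' because sub-plateau paths gain nothing by pinning; this is exactly the \emph{statement} of Proposition \ref{prop:passage_dilute}, namely that $\limsup_{\varepsilon\to 0}\limsup_n g_n(\sigma,y)\le(\sqrt{\sigma+y}+\sqrt{y})^2$ for $y$ below the critical slope $y_1(0)$, combined with the trivial bound $\tau_\varepsilon\ge g_1$. But this does \emph{not} follow automatically from the recursion \eqref{recursion_gen}: each step of \eqref{recursion_gen} adds two non-negative error terms, the vertical-boundary correction $\frac{1+\sigma}{2l_n j_{n+1}}$ and the fluctuation term $\delta_n\varphi(y)$, and one must show that these accumulate to $o(1)$ as $\varepsilon\to 0$ \emph{and} that the maximization over $\bar y$ in \eqref{recursion_gen} keeps returning the identity map rather than drifting toward the pinning regime. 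The paper does this by establishing that $(y_n)$ is increasing and explicitly controlled (Lemma \ref{lemma_yn}), then telescoping the recursion in the regime $y<(1-l_n^{-1})y_n$ along the decreasing sequence $y_{n,N}=\prod_{k>n}\frac{l_k}{l_k-1}\,y$ (equations \eqref{gn_2}, \eqref{recursion}, \eqref{eventually}), and finally controlling $\sum_n \frac{1}{l_n}$ and $\sum_n \delta_n\varphi(y_{n,N})$ in the dilute limit. Your proposal names the right tool but asserts its conclusion; the bulk of the technical work for this theorem sits precisely here and would need to be carried out. One additional remark: your ``maximal current $r/4$ already exceeds the free-flow target $\rho(1-\rho)$'' intuition is pointing in the right direction, but the precise condition determining whether pinning is advantageous is that the slope $y/x$ exceeds $y_1(0)$ (the $y_n$ in \eqref{def_yn}), not a current comparison.
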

\textcolor{black}{
Remark that $f^0\neq f_0$, the latter flux function corresponding to the case $\varepsilon=0$, that is a homogeneous rate $1$ TASEP: \textcolor{black}{recall that} {\color{black} $f_0(\rho):=\rho(1-\rho)$, \textcolor{black}{as} defined in \eqref{f0}.}
The fact that the limit in \eqref{dilute_limit} is $f^0$ and not $f_0$ is the sharp transition announced in the introduction. It can be understood as follows: between highly dilute defects, the system is a homogeneous
rate $1$ TASEP. However, the memory of the defects persists (only) through the maximum flux value $r/4$ instead of 
{\color{black} $1/4 = \max_\rho f_0(\rho)$.}
}\\ \\
It is important to note that, although $\rho_c(0)$ is the lower bound of the flat segment of $f_0$, 
{\color{black} the convergence \eqref{limit_plateau} is not a direct consequence
from \eqref{dilute_limit}.}
Theorem \ref{th_dilute_limit} does not imply the existence of the flat segment for given $\varepsilon$ either. However, the proofs of \eqref{limit_plateau} and \eqref{dilute_limit} are closely intertwined and both follow from our renormalization approach.
%
%
%
%
\subsection{A general assumption}
Let us now state  {\color{black} assumption {\hh} which is used in Theorems \ref{th_plateau} and \ref{th_dilute_limit}. 
For this we first define the maximal current in a finite domain.}


\medskip

Let \textcolor{black}{$B = [x_1,x_2]\cap\Z$ be a nonempty interval in $\Z$, where 
{\color{black} $x_1,x_2\in \Z \cup \{ \pm \infty\}$} with $x_1\leq x_2$.}  In the 
following, $\alpha_B:=(\alpha(x):\ x\in B)$ denotes the environment restricted to $B$.
Consider the TASEP in $B$  with the following boundary dynamics: a particle enters at site $x_1$ \textcolor{black}{(if $x_1>-\infty$)} with rate 
\textcolor{black}{$\alpha(x_1-1)$} if this site is empty;
a particle leaves from site $x_2$ \textcolor{black}{(if $x_2<+\infty$)} with rate $\alpha(x_2)$ if this site is occupied. 
{\color{black}
Note that this process
depends 
on the disorder in the larger box}
\be
\label{def_bdiese}
B^\#:=[x_1-1,x_2]\cap\Z .
\ee
From now, 
\textcolor{black}{we} index all related objects by $B^\#$ (the domain of the relevant disorder variables) rather
than $B$ (the domain where particles evolve).
The generator of this process is given by 
\begin{align}
L^\alpha_{B^\#} \textcolor{black}{\varphi}(\eta) & :=  \sum_{x=x_1}^{x_2-1}\alpha(x)\eta(x)[1-\eta(x+1)]\left[
\textcolor{black}{\varphi}\left(\eta^{x,x+1}\right)-\textcolor{black}{\varphi}(\eta)
\right]\nonumber\\
& +  \textcolor{black}{{\bf 1}_{\Z}(x_1)}\alpha(x_1-1)[1-\eta(x_1)]\left[
\textcolor{black}{\varphi}\left(\eta+\delta_{x_1}\right)-\textcolor{black}{\varphi}(\eta)
\right]\nonumber\\
& +\textcolor{black}{{\bf 1}_{\Z}(x_2)}\alpha(x_2)\eta(x_2)\left[
\textcolor{black}{\varphi}\left(\eta-\delta_{x_2}\right)-\textcolor{black}{\varphi}(\eta)
\right],
\label{generator_open}
\end{align}
where $\eta\pm\delta_x$ denotes the creation/annihilation of a particle at $x$.
{\color{black} If $x_1 = -\infty$ or $x_2 = \infty$, the corresponding boundary term does not exists in \eqref{generator_open}.}
\textcolor{black}{When $B$ is finite, this process has a unique invariant measure. This allows the following
definition of the maximal current for the disordered TASEP restricted to $B$. }
\begin{definition}
\label{H_particle}
\textcolor{black}{Assume $B$ is finite.}
The maximal current $j_{\infty,B^\#}(\alpha_{B^\#})$ is the stationary current in the open system defined above, i.e. (independently of $x=x_1,\ldots,x_2-1$)
\begin{eqnarray}
\label{eq: max flux def}
j_{\infty,B^\#}(\alpha_{B^\#}) & = & \int\alpha(x)\eta(x)[1-\eta(x+1)]d\nu^\alpha_{B^\#}(\eta)\\
& = & 
\int 
\textcolor{black}{ \alpha(x_1-1)}
 [1-\eta(x_1)]d\nu^\alpha_{B\#}(\eta)=\int\alpha(x_2)\eta(x_2)d\nu^\alpha_{B^\#}(\eta) , 
\nonumber
\end{eqnarray}
where $\nu^\alpha_{B^\#}$ is the unique invariant measure for the process on $B$ with generator $L^\alpha_{B^\#}$.
\end{definition}
\begin{remark}
One can see that
the right-hand side of \eqref{eq: max flux def} is independent of $x$ 
by writing that the expectation under $\nu^\alpha_{B^\#}$ of $L^\alpha_{ B^\#}\eta(x)$ for $x\in[x_1,x_2]\cap\Z$ (which yields the difference of two consecutive integrals in \eqref{eq: max flux def}) is zero.
\end{remark}
To simplify notation, we shall at times omit the dependence on $\alpha_{B^\#}$ and write $j_{\infty,B^\#}$.
It is well-known \cite{der} that in the homogeneous case, i.e. when $\alpha(x) = r$ for all $x$
in $[0,N]$  (with $r$ a positive constant), $j_{\infty,[0,N]}$ is no longer a random variable and
\be
\label{eq: courant homogene}
\lim_{N \to \infty} j_{\infty,[0,N]} = \inf_N j_{\infty,[0,N]}  = \frac{r}{4} \, . 
\ee
%
%
%
In fact, explicit computations \cite{der} show that, for some constant $C>0$,
\be\label{decay_r}
j_{\infty,[0,N]}\geq \frac{r}{4}+\frac{C}{N}.
\ee
The quantity $j_{\infty,[0,N]}(\alpha_{[0,N]})$ is a function of the environment which measures the speed of decay 
of the maximum current in a box to $r/4$ as the size of the box increases. 
{\color{black} Assumption {\hh}, stated below, requires that with high probability on the disorder the decay of the maximal current towards $r/4$ is slightly slower than \eqref{decay_r}.}

\medskip

\noindent
{\bf Assumption \hh}.
{\it There exists $b\in(0,2)$, $a>0$, $c>0$ and $\beta>0$ such that, for $\varepsilon$ small enough, the following holds for any $N$}: 
\be
\label{condition_h_2}
\mathcal P_\varepsilon
\left(
j_{\infty,[0,N]} ( \alpha_{[0,N]}  )  
\leq\frac{r}{4}+\frac{a}{N^{b/2}} \right) 
\leq \frac{c}{N^\beta} \, .
\ee
Note that if assumption \hh\, is satisfied for some $b\in(0,1)$, it is satisfied {\it a fortiori} for $b=1$.
Thus, from now on, without loss of generality, we will assume that $b\in[1,2)$.
{\color{black}  We stress the fact that the condition $b<2$ is borderline as 
a simple comparison with the homogeneous case \eqref{decay_r} 
leads to a control of the decay for  $b=2$.}

\medskip

We have not been able to prove that assumption {\hh} is satisfied for Bernoulli disorder $Q=\delta_r$, although we believe this is true. However, as stated in  Lemma \ref{lemma_assumption}, the tail assumption \eqref{tail_assumption} implies {\hh}. 
%
%
%
%
%
\begin{proof}[Proof of Lemma \ref{lemma_assumption}]
Let $\alpha^\star:=\min_{x\in [0,N]}\alpha(x)$. 
It follows from a standard coupling argument 
\textcolor{black}{(see (ii) of Lemma \ref{lemma_j_alpha})} that the flux is monotone with respect to the jump rates:
%
\be
\label{simple_coupling}
j_{\infty,[0,N]}(\alpha_{[0,N]}) \geq                                                            
j_{\infty,[0,N]} \big( \alpha^\star,\ldots,\alpha^\star \big),
\ee
where 
$j_{\infty,[0,N]} \big( \alpha^\star,\ldots,\alpha^\star \big)$ 
stands for the current of a homogeneous TASEP {in $[1,N]$ with bulk, exit and entrance rates
$\alpha^\star$.
By \eqref{decay_r}, it is larger than $\alpha^\star/4$, so that $j_{\infty,[0,N]}(\alpha_{[0,N]})  \geq\alpha^\star/4$.
Thus assumption \mbox{\hh} will be implied by controlling $\alpha^\star$. 
Using  the tail of the distribution $Q$ \eqref{tail_assumption}, we get }
$$
\mathcal P_\varepsilon\left(
\min_{x \in[0,N]}\alpha(x)\leq r+\frac{a'}{N^{b/2}}
\right) \leq N Q \left( [r,  r+\frac{a'}{N^{b/2} } ) \right)     
\leq \frac{c'}{N^\beta},
$$
for some well chosen parameters $a'>0$ , $c'>0$, $b\in(0,2)$, $\beta>0$.
This follows from elementary computations.
\end{proof}
%
%
%
%
%
\section{last passage percolation approach}\label{sec:lpp}
The derivation of Theorems \ref{th_plateau}  and \ref{th_dilute_limit} relies on a reformulation of the problem in terms of last passage percolation.
\subsection{Wedge last passage percolation} 
Let 
%
%
%
%
$Y=(Y_{i,j}:\,(i,j)\in\Z\times\N)$ 
be an i.i.d. family of exponential random variables with parameter 1 independent of the 
environment $(\alpha(i):\ i\in\Z)$. In the following, these variables will sometimes be called {\it service times}, in reference
to the queuing interpretation of TASEP.
The distribution of $Y$ is denoted by $\Prob$ and {\color{black} the} expectation with respect to this distribution by $\Exp$.
Let 
$$
\mathcal W:=\{(i,j)\in\Z^2:\,j\geq 0,\,i+j\geq 0\} \, .
$$ 
Index $i$ represents a site and index $j$ a particle.
Given two points $(x,y)$ and $(x',y')$ in $\Z\times\N$, we denote by  $\Gamma((x,y),(x',y'))$ the set of paths
$\gamma=(x_k,y_k)_{k=0,\ldots,n}$ such that $(x_0,y_0)=(x,y)$, $(x_n,y_n)=(x',y')$, and $(x_{k+1}-x_k,y_{k+1}-y_k)\in\{(1,0),(-1,1)\}$ for every $k=0,\ldots,n-1$.
Note that  $\Gamma((x,y),(x',y'))=\emptyset$ if $(x'-x,y'-y)\not\in\mathcal W$. Given a path $\gamma\textcolor{black}{=(x_k,y_k)_{k=0,\ldots,n}}\in\Gamma((x,y),(x',y'))$, its passage time is defined by
\be\label{def_passage}
T^\alpha(\gamma):=\sum_{k=0}^n 
\frac{Y_{x_k,y_k}}{\alpha(x_k)}.
%
%
\ee
The last passage time between $(x,y)$ and $(x',y')$ is defined by 
\be\label{def_last}
T^\alpha((x,y),(x',y')):=\max\{T^\alpha (\gamma):\,\gamma\in\Gamma((x,y),(x',y'))\}.
\ee
We shall simply write $T^\alpha(x,y)$ for $T^\alpha((0,0),(x,y))$. This quantity has the following particle interpretation.
%
%
For $(t,x)\in[0,+\infty)\times\Z$, let
\begin{eqnarray*}
H^\alpha(t,x) =  \min\{y\in\N: \quad T^\alpha (x,y)>t\}
\quad \text{and} \quad 
\eta_t^\alpha(x)  =  H^\alpha(t,x-1)-H^\alpha(t,x).
\end{eqnarray*}
Then $(\eta_t^\alpha)_{t\geq 0}$ is a TASEP with generator \eqref{generator} and initial configuration 
\textcolor{black}{$\eta^*=\indicator{\Z\cap(-\infty,0]}$}, and 
$H^\alpha$ is its height process.
Besides, if we label particles initially so that  the particle at $x\leq 0$ has label $-x$, 
then for $(x,y)\in\mathcal W$, $T^\alpha(x,y)$ is the time at which particle $y$ reaches site  $x+1$.
Let us recall the following result from \cite{sep}.
%
%
\begin{theorem}
\label{th_sep}
Let $\mathcal W':=\{(x,y)\in\R^2:\,y \geq 0,\,\,x+y\geq 0\}$.
For {\color{black} $\mathcal P$-a.s.} realization of the disorder $\alpha$, the function\margincom{\textcolor{black}{j'aurais mis a.e. ?}}
\be
\label{def_limtime}
(x,y)\in\mathcal W'
\mapsto
\tau(x,y):=\lim_{N\to\infty}  \frac{1}{N}  T^\alpha([Nx],[Ny])
\ee
is well-defined in the sense of a.s. 
convergence with respect to the distribution of $Y$. 
It is finite, positively $1$-homogeneous and superadditive (thus concave). The function
\be
\label{def_limheight}
(t,x)\in[0,+\infty)\times\R\mapsto h(t,x):=\lim_{N\to\infty}\frac{1}{N}H^\alpha([Nt],[Nx])
\ee
is well-defined in the sense of a.s. convergence with respect to the distribution of $Y$. It is finite, positively $1$-homogeneous and subadditive (thus convex). These functions do not depend on $\alpha$ and are related through
\begin{eqnarray}
\label{corr_3} h(t,x) & = & \inf\{y  \in [0,+\infty): \quad  \tau(x,y)>t\},\\
\label{corr_4} \tau(x,y) & = & \inf\{t  \in[0,+\infty): \quad h(t,x)\geq y\}.
\end{eqnarray}
\end{theorem}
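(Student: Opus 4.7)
The plan is to apply Kingman's superadditive ergodic theorem to $T^\alpha$ along integer rays to obtain the limit shape $\tau$, then to extend it to all of $\mathcal{W}'$ via homogeneity and concavity, and finally to derive the height function limit $h$ from an inverse/duality relation. First, I would establish the superadditivity of the last passage time: concatenation of optimal paths yields
$$
T^\alpha((0,0), (x,y)) \geq T^\alpha((0, 0), (x', y')) + T^\alpha((x', y'), (x, y))
$$
whenever $(x', y')$ and $(x - x', y - y')$ lie in $\mathcal{W}$. The joint process $(\alpha, Y)$ is stationary and ergodic under the shift $S_{(x, y)}: (\alpha, Y) \mapsto (\theta_x \alpha, \sigma_{(x, y)} Y)$ for every $(x, y) \in \mathcal{W} \cap \Z^2$ with $x \neq 0$, since both $\alpha$ and $Y$ are i.i.d.\ and the product shift is then Bernoulli. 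Integrability $\Exp[T^\alpha((0, 0), (x, y))] < \infty$ follows from the lower bound $\alpha \geq r > 0$ on the support of the disorder and the fact that any admissible path has at most $x + 2y + 1$ vertices. Kingman's theorem then yields a $(\Prob \otimes \mathcal{P}_\varepsilon)$-a.s.\ deterministic limit $\tau(x, y) = \lim_n T^\alpha(n x, n y)/n$ for these integer points.

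Next, positive rational homogeneity $\tau(q x, q y) = q \tau(x, y)$ for $q \in \N^*$ follows by passing to subsequences, and extends $\tau$ consistently to all rational directions with $x \neq 0$. Superadditivity passes to the limit, and combined with $1$-homogeneity yields concavity on this dense subset of $\mathcal{W}'$. A concave function on a dense subset of a convex cone extends uniquely to a concave, hence locally Lipschitz and continuous, function on the interior of $\mathcal{W}'$; the ray $x = 0$, which is interior to $\mathcal{W}'$, is covered by this continuity argument, bypassing the lack of ergodicity of $S_{(0, y)}$. A standard monotonicity and sandwiching argument (using that $T^\alpha$ is nondecreasing in each coordinate) then upgrades a.s.\ convergence to every real $(x, y)$ in the interior of $\mathcal{W}'$. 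The boundary rays $y = 0$ and $x + y = 0$ are handled separately: on each, the LPP reduces to a one-dimensional sum to which the classical ergodic theorem applies directly.

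For the height function, I would exploit the built-in duality $\{H^\alpha(t, x) \leq y\} = \{T^\alpha(x, y) > t\}$ and define $h(t, x)$ by the right-hand side of \eqref{corr_3}. Strict monotonicity and continuity of $\tau(x, \cdot)$ on its effective domain (consequences of concavity and the linear-in-$y$ lower bound from an explicit right-then-up-left path) ensure that $h$ is continuous in $(t, x)$. The a.s.\ convergence $N^{-1} H^\alpha([Nt], [Nx]) \to h(t, x)$ then follows from the uniform convergence of $N^{-1} T^\alpha([Nx], \cdot)$ to $\tau(x, \cdot)$ on compact intervals, a Dini-type consequence of pointwise convergence of monotone functions to a continuous limit. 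The relation \eqref{corr_4} is the functional inverse of \eqref{corr_3}, valid by strict monotonicity and continuity; positive $1$-homogeneity, subadditivity, and convexity of $h$ are inherited from the corresponding properties of $\tau$ through this inverse.

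The principal obstacle I anticipate is the passage from a.s.\ convergence along the integer sublattice to a.s.\ convergence along $([Nx], [Ny])$ for arbitrary real $(x, y)$, which requires a uniform control on $T^\alpha$ not directly furnished by Kingman's theorem. The remedy is to exploit the concavity (hence local Lipschitz continuity) of $\tau$ to interpolate between nearby rational directions, combined with the monotonicity of $T^\alpha$ in the endpoint coordinates. A secondary subtlety is the treatment of the non-ergodic direction $(0, y)$, where Kingman's theorem alone gives only a random limit measurable with respect to $\sigma(\alpha)$ and one must invoke continuity from nearby ergodic directions to conclude determinism.
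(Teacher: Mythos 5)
The paper does not prove this theorem: it is recalled verbatim from Sepp\"al\"ainen's work \cite{sep} (the text preceding the statement reads ``Let us recall the following result from \cite{sep}''). So there is no internal proof to compare against. Your outline is nonetheless essentially the standard argument used in that reference and in the corner-growth literature more broadly: Kingman's superadditive ergodic theorem along integer rays, homogeneity to pass to rational directions, concavity from superadditivity plus $1$-homogeneity, local Lipschitz continuity of the resulting concave function to interpolate, and monotonicity of $T^\alpha$ in the lattice partial order (i.e.\ whenever the increment lies in $\mathcal{W}$) to squeeze $T^\alpha([Nx],[Ny])/N$ between nearby rational directions and upgrade to all real $(x,y)$. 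Your treatment of the degenerate vertical direction $x=0$ is the correct fix: the joint shift is no longer ergodic (it leaves $\sigma(\alpha)$ invariant), so Kingman alone gives only a random limit, but the lattice monotonicity bounds $T^\alpha(0,[Ny])$ between $T^\alpha(\mp[\varepsilon N],[Ny])$, whose normalized limits are deterministic and converge to a common value as $\varepsilon \to 0$ by continuity of $\tau$ on the interior of $\mathcal{W}'$. The derivation of $h$ from $\tau$ via the pointwise duality $\{H^\alpha(t,x)\le y\}=\{T^\alpha(x,y)>t\}$, upgraded to the macroscopic inverse relations \eqref{corr_3}--\eqref{corr_4} through strict monotonicity and a Dini-type uniform convergence on compacts, is sound; and your observation that convexity and $1$-homogeneity of $h$ are inherited from concavity and $1$-homogeneity of $\tau$ through that inversion is correct. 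The one point worth making more explicit is the integrability hypothesis in Kingman's theorem: you need a bound of the form $\Exp\, T^\alpha(nx,ny)\le Cn$ rather than mere finiteness, which follows here by bounding $T^\alpha$ by $r^{-1}$ times the sum of all $Y_{i,j}$ in the trapezoid $\{0\le j\le ny,\ -j\le i\le nx+ny-j\}$, yielding a linear bound since each path visits at most $nx+2ny+1$ vertices. With that clarification the argument is complete.
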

\textcolor{black}{
\begin{remark}\label{remark_L1}
It is shown in \cite[Lemma 5.2]{sep} that $rT^\alpha(x,y)$ is stochastically dominated  by the sum of $x+2y$
i.i.d. $\mathcal E(1)$ random variables. Tail estimates for this sum imply that the
sequence of random variables $N^{-1}T^\alpha([Nx],[Ny])$ is uniformly integrable. Thus the limit \eqref{def_limtime} also holds in ${\mathbb L}^1$.
\end{remark}
}
By homogeneity, the function $h$ in \eqref{def_limheight} is of the form
\be\label{def_limheight_k}
h(t,x)=t k \left( \frac{x}{t} \right) 
\ee
for some convex function $k:\R\to\R^+$. 
It is known that for homogeneous TASEP (that is $\alpha(x)=1$ for all $x$), we have
\begin{eqnarray}
\label{eq: homogeneous LPP}
\tau(x,y)=(\sqrt{x+y}+\sqrt{y})^2 ,\quad 
k(v)=\frac{(1-v)^2}{4}\indicator{[-1,1]}(v)-v\indicator{(-\infty,-1)}(v) .
\end{eqnarray}

\subsection{Reformulation of Theorems \ref{th_plateau}   and \ref{th_dilute_limit}}

In this section, we are going to rewrite the flux in the last passage framework and show that Theorem \ref{th_plateau} can be deduced from a statement on the passage time.
It is shown in \cite{sep} that the macroscopic flux function $f$ is related to $k$ (defined in \eqref{def_limheight_k}) by the convex duality relation
\be
\label{flux_legendre}
f(\rho):=\inf_{v\in\R}[k(v)+v\rho],\quad\rho\in[0,1]
\ee
which implies concavity of $f$. 
We now
introduce a family of ``reference'' macroscopic flux functions and associated macroscopic passage time and height functions. Let $0\textcolor{black}{<}\rho_c\leq 1/2$ and $J \geq 0$.
For 
$\rho\in[0,1]$, we define (see Figure \ref{fig: flux truncated})
%
%
%
\be
\label{ref_flux}
f^{\rho_c,J}(\rho)  : = 
%
%
J\min\left(
\frac{\rho}{\rho_c},\frac{1-\rho}{\rho_c},1
\right).
\ee

\begin{figure}[htpb]
  \centering
\includegraphics[width=.5\columnwidth]{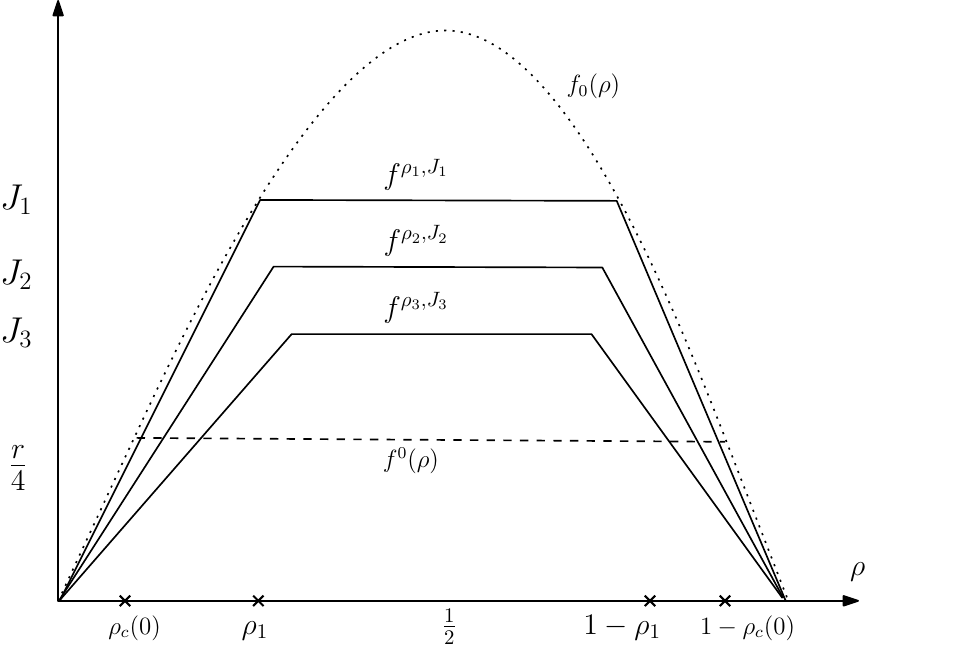}
\caption{\tiny 
{\color{black}
The homogeneous TASEP flux $f_0(\rho) = \rho (1-\rho)$ is represented in dotted line and 3 graphs of modified fluxes $f^{\rho_n,J_n}$ are depicted in plain line.
The renormalization strategy   amounts to bound from below the flux at the scale $n$ by $f^{\rho_n,J_n}$ and to use this information to control the lower bound on the flux at the scale $n+1$. As depicted in the figure, the sequence of fluxes $J_n$ decays to  $r/4$ when the scale $n$ grows. The width of the flat segment $[\rho_n, 1- \rho_n]$ shrinks
also at each step but remains controlled. 
When the dilution $\varepsilon$ tends to 0, the limiting flux $f^0$ defined in \eqref{def_d0} is the flux $f_0(\rho)$ truncated in 
$[\rho_c(0), 1- \rho_c(0)]$ at the level $r/4$ (dashed line). 
For $\varepsilon$ small enough, $J_1$ can be chosen very close to $r/4$ and $\rho_1$ close to $\rho_c(0)$. Furthermore for small $\varepsilon$, 
the flat segment $[\rho_n, 1- \rho_n]$ is almost unchanged at each scale and
this leads to the convergence in Theorem \ref{th_plateau}.}
 } 
  \label{fig: flux truncated} 
\end{figure}

Given Proposition \ref{prop_max}, 
{\color{black} the occurence of a flat segment in Theorem \ref{th_plateau} boils down to proving 
the existence of $\varepsilon_0>0$ and  $\rho \in[0,1/2)$ such 
that the flux remains above $r/4$  for densities in  $[\rho, 1- \rho]$:}
\be
\label{lower_bound_0}
\forall \varepsilon < \varepsilon_0, \qquad 
f_\varepsilon  \geq f^{{\rho},r/4} .
%
\ee
{\color{black}
As $f_\varepsilon \leq r/4$ by Proposition \ref{prop_max}, lower bound \eqref{lower_bound_0} implies that $f_\varepsilon$ equals $r/4$ on $[\rho, 1- \rho]$.
Since $f_\varepsilon$ is concave and symmetric \eqref{symmetry_f}, $\rho_c(\varepsilon)$ in \eqref{def_critical_density} is characterized by:}
%
\be
\label{charac_critical}
\rho_c(\varepsilon)=\inf\left\{
\rho\in[0,1/2]: \quad f\geq f^{\rho,r/4}
\right\}.
\ee
%
%
%
The convex conjugate of  $f^{\rho_c,J}$ through Legendre duality \eqref{flux_legendre} is defined for $x\in\R$ by
\begin{eqnarray}
\label{ref_height_2}
k^{\rho_c,J}(x) & := & (-x)\indicator{(-\infty,-J/\rho_c)}(x)\\
\nonumber & + & [J-(1-\rho_c)x]\indicator{[-J/\rho_c,0)}(x)
+[J-\rho_c x]\indicator{[0,J/\rho_c)}(x).
\end{eqnarray}
Finally, one can associate to $k^{\rho_c,J}$ a  passage time function and a height function, related by \eqref{corr_3}--\eqref{corr_4},  and defined for $x\in\R$ and $y\geq x^-$
by
\begin{eqnarray}
\label{ref_time} 
\tau^{\rho_c,J}(x,y) := \frac{\rho_c x^+ - (1-\rho_c)x^-+y}{J} 
%
\quad \text{and} \quad 
h^{\rho_c,J}(t,x)   :=  t k^{\rho_c,J}(x/t) ,
\end{eqnarray}
where $x^+ = \max \{ x,0 \}$ and $x^- = -\min \{ x,0 \}$.
It follows from \eqref{corr_3}, \eqref{def_limheight_k} and \eqref{flux_legendre} that
\be
\label{inverse_order}
f\geq f^{\rho,J} \quad \Leftrightarrow  \quad \tau\leq\tau^{\rho,J}.
\ee
Hence, the quantity $\rho_c(\varepsilon)$ in \eqref{def_critical_density} can be defined equivalently as follows:
\be
\label{critical_density_2}
{\rho}_c(\varepsilon)=\inf\{\rho\in[0,1/2]:
\quad \tau_\varepsilon  \leq  \tau^{\rho,r/4}\}.
\ee
Thus the lower bound \eqref{lower_bound_0} on the flux  can be rephrased in terms of an upper bound on the last passage time.
%
Theorems \ref{th_plateau}--\ref{th_dilute_limit} are consequences of the following theorems, which will be proved in the next sections.
\begin{theorem}
\label{thm: passage upper}
Let $\tau_\varepsilon$ be the limiting passage time defined by \eqref{def_limtime} when the environment has distribution $\mathcal P_\varepsilon$. Then, under assumption \hh, 
there exist $\varepsilon_0>0$ and $\rho< 1/2$ such that 
\be
\label{lower_bounds}
\textcolor{black}{\forall \varepsilon < \varepsilon_0},
\qquad 
\tau_\varepsilon \leq \tau^{\rho,r/4},
\ee
with $ \tau^{\rho,r/4}$ defined in \eqref{ref_time}.
In particular, $\tau_\varepsilon(.,y)$ has a cusp at $x=0$ 
\textcolor{black}{
and
the optimal value $ {\rho}_c(\varepsilon)$ introduced in \eqref{critical_density_2} converges in the dilute limit:}
\be
\label{limit_rho'}
\lim_{\varepsilon\to 0} {\rho}_c(\varepsilon)=\frac{1}{2}(1-\sqrt{1-r}).
\ee
\end{theorem}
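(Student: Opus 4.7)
By the equivalence \eqref{inverse_order} and the characterization \eqref{critical_density_2}, it is enough to produce, for every sufficiently small $\varepsilon$, some $\rho<1/2$ such that $\tau_\varepsilon\leq\tau^{\rho,r/4}$, and to show that $\rho$ can be chosen to approach $\rho_c(0)=\tfrac{1}{2}(1-\sqrt{1-r})$ as $\varepsilon\to 0$. I would build such a bound through an inductive multi-scale scheme on a geometrically growing sequence of scales $L_n=L_0^{\theta^n}$. The inductive hypothesis at scale $n$ is a quenched bound of the form
\[
T^\alpha\bigl((0,0),(x,y)\bigr)\leq L_n\,\tau^{\rho_n,J_n}(x/L_n,y/L_n)+o(L_n),\qquad (x,y)\in L_n\mathcal W',
\]
valid on an event of $\mathcal P_\varepsilon$-probability at least $1-cL_n^{-\beta'}$ for some $\beta'>0$, where $(\rho_n,J_n)$ is a sequence with $J_n\searrow r/4$ and $\rho_n\nearrow\rho_\infty(\varepsilon)<1/2$. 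Passing $N\to\infty$ via Theorem~\ref{th_sep} and then $n\to\infty$ yields \eqref{lower_bounds} with $\rho=\rho_\infty(\varepsilon)$.

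\textbf{One-step recursion.} The heart of the scheme is the recursion of Section~\ref{subsec:outline} relating scale $n$ to scale $n+1$. I would partition a box of size $L_{n+1}$ into sub-boxes of size $L_n$ and classify each sub-box as \emph{fast} if its restricted maximal current $j_{\infty,\cdot}$ from Definition~\ref{H_particle} exceeds $r/4+a/L_n^{b/2}$, and \emph{slow} otherwise. Assumption~\hh{} with \eqref{condition_h_2} ensures that a given sub-box is slow with probability at most $cL_n^{-\beta}$, so a Borel--Cantelli / concentration argument (combined with the fluctuation estimate of Section~\ref{sec: prop passage time} to pass from annealed to quenched) controls the fraction of slow sub-boxes along any wedge-path. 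On a fast sub-box, the inductive bound applies with parameters $(\rho_n,J_n)$; on a slow sub-box, the passage time is bounded by the crude homogeneous reference $\tau^{\rho_c(0),r/4}$ furnished by Proposition~\ref{prop_max}. Optimizing the geodesic across this patchwork, using superadditivity of $\tau$ and the concavity of $\tau^{\rho,J}$ in its parameters, produces an updated pair $(\rho_{n+1},J_{n+1})=(\rho_n+\eta_{n+1},\,r/4+\delta_{n+1})$ whose increments $\delta_{n+1},\eta_{n+1}$ are controlled by the slow-block density $cL_n^{-\beta}$ and the current gap $a/L_n^{b/2}$ provided by~\hh. Section~\ref{sec:other_proofs} would then verify that this recursion propagates the required bounds between scales.

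\textbf{Main obstacle and dilute limit.} The hard part, and the one flagged in the introduction, is verifying that the recursion does not degenerate: one must show that $\delta_n$ remains positive and that $\eta_n$ is summable, so that $J_n\to r/4$ and $\rho_\infty(\varepsilon)<1/2$. This is exactly where the condition $b<2$ is critical: the current gap $a/L_n^{b/2}$ has to exceed both the natural $1/L_n$-order fluctuations seen in the homogeneous regime \eqref{decay_r} and the parametric shift imposed by slow sub-blocks at each level, which is what makes the slow-bond picture survive through renormalization. Choosing $L_0$ large and $\theta>1$ close to $1$ makes $\sum_n L_n^{-\beta}<\infty$ and $\sum_n\eta_n<\infty$, giving the desired convergence. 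For \eqref{limit_rho'}, I would observe that for small $\varepsilon$ the initial pair $(\rho_1,J_1)$ can be taken arbitrarily close to $(\rho_c(0),r/4)$, since below the plateau $f_\varepsilon$ is close to the homogeneous flux $\rho(1-\rho)$ by a direct coupling with the $\varepsilon=0$ system, and since the cumulative drift $\sum_n\eta_n$ can be made uniformly small. This gives $\limsup_{\varepsilon\to 0}\rho_c(\varepsilon)\leq\rho_c(0)$. The matching lower bound $\rho_c(\varepsilon)\geq\rho_c(0)$ holds without any assumption: the coupling $f_\varepsilon(\rho)\leq\rho(1-\rho)$ together with $\rho(1-\rho)<r/4$ for $\rho<\rho_c(0)$ prevents the flux from reaching the plateau below $\rho_c(0)$, yielding \eqref{limit_rho'}.
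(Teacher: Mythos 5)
Your high-level architecture — doubly-exponential scales, classification of sub-boxes via their maximal current, use of $\hh$ to make bad boxes rare, and a two-sided dilute-limit argument ending with the matching lower bound $\rho_c(\varepsilon)\geq\rho_c(0)$ — matches the paper's strategy, but the one-step recursion as you describe it would not close. The paper's good-block notion is \emph{recursive}: a level-$(n+1)$ block is good if it contains at most one bad level-$n$ subblock \emph{and} its own maximal current exceeds $j_{n+1}$. Your per-box fast/slow classification is not recursive, so the induction hypothesis cannot actually be applied: to bound a ``fast'' sub-box you need it to be a good block at level $n$, which depends on its sub-sub-boxes, and a compounding fraction of slow boxes at every level is precisely what the paper's definition rules out (Lemma \ref{lemma_good} shows $q_n\to 0$ because each good block tolerates exactly one bad subblock). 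Your bound on slow sub-boxes is also unjustified: Proposition \ref{prop_max} concerns the macroscopic flux, not finite-block passage times, and does not furnish a $\tau^{\rho_c(0),r/4}$-type bound inside a slow block. What the paper actually uses in \eqref{crude_vertical_bound} is an extension of the path inside the single allowed bad sub-block $B_{i_0}$ to a vertical path, bounded by $1/j_{\infty,B_{i_0}}$, which is in turn $\leq 1/j_{n+1}$ by the monotonicity $j_{\infty,B_{i_0}}\geq j_{\infty,B}\geq j_{n+1}$ that holds because $B_{i_0}$ sits inside a good block whose current is controlled. That monotonicity is the mechanism, not a homogeneous reference.

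A second, more conceptual point: your description of the fluctuation estimate's role is off, and with it the logical level at which the recursion lives. The recursion \eqref{recursion_gen} is on \emph{expected} restricted passage times $\tau^\alpha_{n,B}(\sigma,y)=\Exp(T^\alpha_B/K_n)$ — averaged over the service times $Y$, deterministic in $\alpha$ once $B$ is known to be good — not on quenched passage times holding with high probability. Proposition \ref{prop:fluct} controls the gap $\Exp[\max_{\tilde\gamma}(\cdot)]-\max_{\tilde\gamma}[\Exp(\cdot)]$ arising from exchanging maximum and expectation over path skeletons; it is a concentration/entropy estimate, not a passage from annealed to quenched, and it is not a Borel--Cantelli argument over slow sub-boxes. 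The randomness of $\alpha$ only re-enters in Section \ref{sec:completion}: there the deterministic bound over good blocks is converted into a bound on the limiting passage time $\tau_\varepsilon$ (via the $L^1$ version of Theorem \ref{th_sep}, Corollary \ref{cor_block} for unrestricted paths, and a Cauchy--Schwarz split over the bad-environment event whose probability vanishes by Lemma \ref{lemma_good}). Keeping the recursion at the level of expectations over $Y$, with good blocks as a deterministic class, is what makes the scheme tractable; a quenched-with-high-probability inductive hypothesis would require controlling a union over all scales and positions, which the paper deliberately avoids.
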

\begin{theorem}
\label{th:passage_dilute}
\textcolor{black}{The  passage time function $\tau_\varepsilon$ converges in the dilute limit:}
\textcolor{black}{
\be
\label{def_tau0}
\lim_{\varepsilon\to 0}\tau_\varepsilon(x,y)=\tau_0(x,y):=\left\{
\ba{lll}
%
%
(\sqrt{x+y}+\sqrt{y})^2 &  \mbox{if} & y\leq x^+ y_1^1(0)-x^-y_1^{-1}(0)\\ \\
%
\tau^{\rho_c(0),r/4}(x,y) & \mbox{if} & y> x^+ y_1^1(0)-x^-y_1^{-1}(0)
%
%
\ea
\right.
\ee
}
\textcolor{black}{where $\tau_0$ is the counterpart of the flux function $f_0$
defined in \eqref{dilute_limit} and}
\margincom{{\color{black} Il y a 2 valeurs de $y_1(0)$ selon $\sigma = \pm 1$ ?\\
et aussi proposition 4.2 ?
}\textcolor{black}{en effet, j'ai note $y_1^\sigma$, cf. preuve prop 4.5}}
\be
\label{def_y1}
y_1^{\textcolor{black}{1}}(0):=\frac{\rho_c(0)^2}{1-2\rho_c(0)}\in[0,+\infty],\quad
\textcolor{black}{
y_1^{-1}(0):=\frac{[1-\rho_c(0)]^2}{1-2\rho_c(0)}\in[0,+\infty]
}
\ee
where $\rho_c(0)$ was introduced in \eqref{def_limit_plateau}.
\end{theorem}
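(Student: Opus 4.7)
The plan is to bracket $\tau_\varepsilon$ between matching upper and lower bounds, each coinciding with $\tau_0$ on one of the two prescribed regions, and to close the remaining gap using Proposition~\ref{prop_max} and the concavity of $\tau_\varepsilon$.

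First, Theorem~\ref{thm: passage upper} yields $\tau_\varepsilon \leq \tau^{\rho_c(\varepsilon),r/4}$ for all sufficiently small $\varepsilon$, with $\rho_c(\varepsilon)\to\rho_c(0)$ by \eqref{limit_rho'}. Since $\tau^{\rho,r/4}(x,y)$ depends affinely on $\rho$ (see \eqref{ref_time}), this gives $\limsup_{\varepsilon\to 0}\tau_\varepsilon(x,y)\leq\tau^{\rho_c(0),r/4}(x,y)$. Symmetrically, the pointwise bound $\alpha(x)\leq 1$ gives $T^\alpha(\gamma)\geq T^1(\gamma)$ for every path $\gamma$, and passing to the limit with \eqref{eq: homogeneous LPP}, $\liminf_{\varepsilon\to 0}\tau_\varepsilon(x,y)\geq\tau_1(x,y):=(\sqrt{x+y}+\sqrt{y})^2$. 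A direct computation---this is what defines $y_1^{1}(0)$ and $y_1^{-1}(0)$---shows that $\tau_1$ and $\tau^{\rho_c(0),r/4}$ coincide in value and gradient on the interface $y=x^+y_1^1(0)-x^-y_1^{-1}(0)$, so $\tau^{\rho_c(0),r/4}\geq\tau_1$ globally. Therefore these two natural bounds already match $\tau_0$ on the correct region: the upper bound on Region~2 and the lower bound on Region~1.

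To close the remaining gap on Region~2, we invoke Proposition~\ref{prop_max}: since $\max f_\varepsilon=r/4$ exactly, Legendre duality \eqref{flux_legendre} gives $k_\varepsilon(0)=r/4$, and hence $\tau_\varepsilon(0,y)=4y/r=\tau^{\rho_c(0),r/4}(0,y)$. This boundary equality, together with $\tau_\varepsilon\leq\tau^{\rho_c(\varepsilon),r/4}$, the concavity of $\tau_\varepsilon$ from Theorem~\ref{th_sep} and the sharpness of $\rho_c(\varepsilon)$ as an infimum in \eqref{critical_density_2} (which forces $\tau_\varepsilon$ to touch the affine $\tau^{\rho_c(\varepsilon),r/4}$ at least once in the interior), imposes the pointwise identity $\tau_\varepsilon=\tau^{\rho_c(\varepsilon),r/4}$ on the full ``pinning cone'' $\{y\geq x^+y_1^1(\varepsilon)-x^-y_1^{-1}(\varepsilon)\}$, which converges to Region~2 as $\varepsilon\to 0$. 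To close the gap on Region~1 we argue that in the non-pinning regime the LPP geodesic behaves essentially as in the homogeneous rate-$1$ model: a modification of the homogeneous geodesic gives a competing path $\gamma$ with $T^\alpha(\gamma)\leq T^1(\gamma)+o(N)$, and the fluctuation estimates of Section~\ref{sec: prop passage time} together with the recurrence of Sections~\ref{subsec:outline}--\ref{sec:other_proofs} provide the quantitative control needed to conclude $\tau_\varepsilon\leq\tau_1+o_\varepsilon(1)$ on Region~1.

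The most delicate step is the Region~1 upper bound. Defect columns carry larger service times $Y/\alpha$ and could in principle attract the max-geodesic away from the homogeneous one; the renormalization framework is precisely what quantifies that this attraction vanishes in the dilute limit, as long as one stays away from the pinning cone. Once both regional limits are established, the explicit value $\rho_c(0)=(1-\sqrt{1-r})/2$ identifies the interface via \eqref{def_y1}, completing the proof.
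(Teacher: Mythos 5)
Your global plan — bracket $\tau_\varepsilon$ between $g_1$ (rate-$1$ comparison) and $\tau^{\rho_c(\varepsilon),r/4}$ (Theorem~\ref{thm: passage upper}), check that the two envelopes are tangent along the interface, and then close the gap separately on each region — has the right ingredients, and your Region~1 step correctly points to Proposition~\ref{prop_dilute_limit} / Proposition~\ref{prop:passage_dilute} (the dilute limit of the $g_n$ recursion) as what is actually needed there; this is exactly the paper's mechanism. However, your Region~2 argument has a genuine gap. You deduce $\tau_\varepsilon(0,y)=4y/r$ correctly from Proposition~\ref{prop_max} and $k_\varepsilon(0)=\max f_\varepsilon$, but the subsequent ``touching'' argument does not localize the pinning cone where you claim. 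The infimum characterization \eqref{critical_density_2} only forces $\tau_\varepsilon$ and $\tau^{\rho_c(\varepsilon),r/4}$ to agree at \emph{some} interior point $(\pm 1,y^\star(\varepsilon))$ (since the agreement along $\{x=0\}$ is trivially true for every $\rho$ and hence non-binding). Concavity and homogeneity then force equality only on the cone spanned by $\{x=0\}$ and the ray through $(\pm 1,y^\star(\varepsilon))$; there is no reason for $y^\star(\varepsilon)$ to equal $y_1^{\pm 1}(\varepsilon)$, a quantity defined through the renormalization recursion \eqref{def_yn}, and you neither prove $\limsup_\varepsilon y^\star(\varepsilon)\le y_1^{\pm1}(0)$ nor supply any other control. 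In short, the cone on which you establish equality could a priori be strictly smaller than Region~2.

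The cleanest way to repair this is to \emph{use} the Region~1 convergence you have already established: since $\tau_\varepsilon(\pm 1,y)\to g_1(\pm 1,y)$ up to and including the boundary values $y=y_1^{\pm1}(0)$, and $g_1=\tau^{\rho_c(0),r/4}$ at those tangency points, you know $\lim_\varepsilon\tau_\varepsilon$ equals the limiting affine function both on $\{x=0\}$ and on the two boundary rays of Region~2. Concavity and homogeneity of $\tau_\varepsilon$ then give $\liminf_\varepsilon\tau_\varepsilon\ge\tau^{\rho_c(0),r/4}$ throughout Region~2 by interpolating any $(x_0,y_0)$ with $y_0>x_0^+y_1^1(0)-x_0^-y_1^{-1}(0)$ between a point on $\{x=0\}$ and a point on the boundary ray; together with $\limsup_\varepsilon\tau_\varepsilon\le\tau^{\rho_c(0),r/4}$ this closes Region~2 with no need to identify the touching point. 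The paper instead performs the analogous squeeze at the dual level: from the Region~1 convergence it deduces $k_\varepsilon(v)\to(1-v)^2/4$ on $[1-2\rho_c(0),1]\cup[-1,2\rho_c(0)-1]$, hence via Legendre duality $f_\varepsilon(\rho)\to\rho(1-\rho)$ outside $[\rho_c(0),1-\rho_c(0)]$, and then uses concavity of $f_\varepsilon$ and $\max f_\varepsilon=r/4$ (Proposition~\ref{prop_max}) to force $f_\varepsilon\to r/4$ on the middle interval. Both routes rest on the same interpolation idea; the point you are missing is that concavity must be applied to a segment whose endpoints are already under control (either $\{x=0\}$ and the Region~1 boundary ray, or $\rho_c(0)$ and $1-\rho_c(0)$), not to the tangency point coming from the definition of $\rho_c(\varepsilon)$, which is not localized.
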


Theorem \ref{thm: passage upper} can be partially extended to LPP with general service-time distribution and heavier tails.
In this case the particle interpretation is less standard, though the process can be viewed as a non-markovian TASEP (see e.g. \cite{khg}  and \cite{sep}).
Our approach (and the extension just explained) also applies to other LPP models with columnar disorder (in the wedge picture) or diagonal disorder (in the square picture), like for instance the $K$-exclusion process \cite{sep}.

\subsection{Last passage reformulation of assumption \hh}\label{subsec:reform}

We  will  reformulate condition \hh\mbox{ in }the last passage setting. To this end, we define restricted passage times. Let $B=[x_1,x_2]\cap\Z$ (where $x_1,x_2\in\Z$) be a finite interval of $\Z$. If $(x,y)$ and $(x',y')$ are such that $x$ and $x'$ lie in $B$, we define $\Gamma_B((x,y),(x',y'))$ as the subset of $\Gamma((x,y),(x',y'))$ consisting of paths
$\gamma$ that lie entirely inside $B$ in the sense that $x_k\in B$ for every $k=0,\ldots,n$. We then define 
\be\label{restricted_time}
T^\alpha_B((x,y),(x',y')):=\max \Big\{T^\alpha (\gamma):\,\gamma\in\Gamma_B((x,y),(x',y')) \Big\}.
\ee
%
The counterpart of Definition \ref{H_particle} is
\begin{lemma}
\label{lemma_reform}
\textcolor{black}{ $B=[x_1,x_2]\cap \Z$ be a nonempty interval of $\Z$, with $x_1,x_2\in   \Z \cup\{\pm \infty\}$ such that
$x_1\leq x_2$.}
The limit
\be\label{def_mintime}
T_{\infty,B}(\alpha_B):=\lim_{m\to\infty} 
%
%
{\frac{1}{m}}  T^\alpha_{B}((x_0,0),(x_0,m))
=\sup_{m\in\N^*}   
%
%
\Exp\left[{\frac{1}{m}}  T^\alpha_{B}((x_0,0),(x_0,m))\right]
\ee
exists $\Prob$-a.s. for $x_0\in B$, does not depend on the choice of $x_0$, and defines a random
variable depending only on the disorder restricted to $B$.
Besides, \textcolor{black}{if $B$ is finite,} we have
\be\label{max_current_lpp}
T_{\infty,B}(\alpha_B)= \frac{1}{j_{\infty,{B}}(\alpha_{B})}  ,
\ee
where 
$j_{\infty,B}(\alpha_{B})$ is the stationary current \eqref{eq: max flux def} in the open system restricted to 
\be
\label{def_bprime}
B':=[x_1+1,x_2]\cap\Z.
\ee
%
%
\end{lemma}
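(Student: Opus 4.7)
My plan is to prove the three assertions---existence of the a.s.\ limit $T_{\infty,B}$, the supremum characterization, and the identification with $1/j_{\infty,B}$---via an approximate super-additive structure together with a coupling to the open TASEP on $B'$. For the first two claims, I would start from the observation that concatenating an optimal path from $(x_0,0)$ to $(x_0,m)$ with one from $(x_0,m)$ to $(x_0,m+n)$ gives a valid path in $\Gamma_B$ whose passage time counts the vertex $(x_0,m)$ twice, so
\[
T^\alpha_B((x_0,0),(x_0,m+n)) \;\geq\; T^\alpha_B((x_0,0),(x_0,m)) + T^\alpha_B((x_0,m),(x_0,m+n)) - \frac{Y_{x_0,m}}{\alpha(x_0)}.
\]
Setting $\widetilde T_m := T^\alpha_B((x_0,0),(x_0,m)) - Y_{x_0,0}/\alpha(x_0)$ upgrades this to the genuine super-additive inequality $\widetilde T_{m+n} \geq \widetilde T_m + \widetilde T_n \circ \theta^m$, where $\theta^m$ shifts the $Y$-array by $m$ in the $y$-direction. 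Since $(Y_{x,y})_{y\geq 0}$ is i.i.d.\ for each $x \in B$ and $\alpha_B$ is fixed, this produces a stationary ergodic super-additive sequence; integrability $\mathbb{E}|\widetilde T_1|<\infty$ holds since admissible length-one paths are bounded. Kingman's super-additive ergodic theorem then yields a.s.\ and $L^1$ convergence of $\widetilde T_m/m$, hence of $T^\alpha_B((x_0,0),(x_0,m))/m$, to a deterministic limit $T_{\infty,B}$ equal to $\sup_m \mathbb{E}[T^\alpha_B((x_0,0),(x_0,m))/m]$, the boundary correction of order $1/(m\alpha(x_0))$ being absorbed in the supremum as $m\to\infty$.

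Independence of $x_0$ would follow from a simple ``hook'' argument: for any $x_0, x_0' \in B$, short zig-zag sub-paths of length $O(|B|)$ can be prepended to $(x_0',0)$ and appended at $(x_0,m)$ to produce admissible paths, so that $|T^\alpha_B((x_0,0),(x_0,m)) - T^\alpha_B((x_0',0),(x_0',m))| = O_B(1)$ uniformly in $m$, negligible after division by $m$. The fact that $T_{\infty,B}$ depends only on $\alpha_B$ is immediate from the restriction in the definition of $\Gamma_B$.

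The crux, identification of $T_{\infty,B}(\alpha_B)$ with $1/j_{\infty,B}(\alpha_B)$, would come from the standard queuing/particle interpretation of wedge LPP. Via a Harris graphical construction driven by the exponential clocks associated to the $Y_{x,y}/\alpha(x)$, I would couple the restricted wedge LPP in $B$ with the open TASEP on $B' = [x_1+1, x_2]$ having entrance rate $\alpha(x_1)$ and exit rate $\alpha(x_2)$---exactly the process whose stationary current is $j_{\infty,B}$, per Definition \ref{H_particle}. Under this coupling, $T^\alpha_B((x_0,0),(x_0,m))$ should equal, modulo an additive transient of order $O(|B|)$ independent of $m$, the time of the $m$-th exit from $B'$ in the open TASEP started from the empty configuration. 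Ergodicity of the open TASEP then yields a.s.\ convergence of the exit time divided by $m$ to $1/j_{\infty,B}(\alpha_B)$, matching the limit obtained in the first step and giving \eqref{max_current_lpp}. The main obstacle is making this coupling rigorous: one must check that the boundary rates emerge precisely as $\alpha(x_1)$ and $\alpha(x_2)$ (which is exactly why $B$, rather than $B^\#$, is the relevant disorder set here), identify which bond's jump count corresponds to the LPP, and control the transient uniformly in the disorder so that it disappears after division by $m$.
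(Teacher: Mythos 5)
Your overall plan---existence via a (corrected) super-additive ergodic argument, $x_0$-independence, then identification through a coupling to the open TASEP---is aligned with the paper's, and the third step is precisely what the paper does in Step~2 of its Appendix~B, including the correct entrance/exit rates $\alpha(x_1)$, $\alpha(x_2)$ for the system on $B'$. Your Kingman argument is a legitimate and more self-contained route to the a.s.\ limit than the paper's, which obtains existence implicitly from ergodicity of the finite open TASEP. However, two steps need repair.

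First, the ``hook'' argument for $x_0$-independence is geometrically impossible as stated: admissible increments are only $(1,0)$ and $(-1,1)$, so there is no path from $(x_0,m)$ to $(x'_0,m)$ at constant height when $x'_0<x_0$, nor any way to return to $(x_0,m)$ once one has moved right. The paper's Step~1 uses a sandwich that shifts the target height: extending a path from $(x'_0,0)$ to $(x'_0,m)$ by $k=|x'_0-x_0|$ right-steps at one end and $k$ up-left steps at the other produces an admissible path in $B$ from $(x_0,0)$ to $(x_0,m+k)$, and a symmetric trimming gives the matching lower bound; stationarity of $Y$ in the $j$-direction then translates the index shift away. You need that height-shifting sandwich, not a constant-height detour.

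Second, ``the boundary correction being absorbed in the supremum'' is too quick. Kingman applied to $\widetilde T_m=T_m-Y_{x_0,0}/\alpha(x_0)$ gives $\lim_m T_m/m=\sup_m\Exp[\widetilde T_m/m]$, and since $\Exp[T_m/m]=\Exp[\widetilde T_m/m]+1/(m\alpha(x_0))$ the two suprema differ in general: for a two-site block with $\alpha\equiv 1$ the only admissible path is the zig-zag with $2m+1$ vertices, so $\Exp[T_m]=2m+1$ and $\sup_m\Exp[T_m/m]=3$ (at $m=1$), while the a.s.\ limit and $1/j_{\infty,B}$ both equal $2$. The supremum characterization therefore holds for the corrected process $\widetilde T$ (equivalently as the $m\to\infty$ limit of $\Exp[T_m/m]$), and your argument should be formulated that way rather than for $T$ itself.
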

\textcolor{black}{
\begin{remark}
Since $T^\alpha_B((x_0,0),(x_0,m))\leq T^\alpha((x_0,0),(x_0,m))$, by Remark \ref{remark_L1} above, 
$T^\alpha_B((x_0,0),(x_0,m))$ is dominated stochastically by the sum of $2m$ i.i.d. $\mathcal E(1)$ random variables. Thus, as in Theorem \ref{th_sep}, the limit in \eqref{def_mintime} also holds in $L^1$.
\end{remark}
}
\noindent
\textcolor{black}{
Note that in \eqref{eq: max flux def},  $j_{\infty,B^\#}(\alpha_{B^\#})$ was defined
as the maximum current for the TASEP in $B$.  
By \eqref{def_bdiese} and \eqref{def_bprime},
$(B')^\#=B$ so that  the above lemma is  consistent with \eqref{eq: max flux def}.
}
The proof of  Lemma \ref{lemma_reform} is postponed to Appendix \ref{appendix_lpp}.
%
%
%
%
%
%

\medskip

To simplify notation, we shall at times omit $\alpha_B$ and write $T_{\infty,B}$,  $j_{\infty,B}$.
We can now restate  condition \hh\, {\color{black} in terms of last passage time}:

\smallskip

\noindent
{\bf Assumption \h}. {\it There exists $b\in(0,2)$, $a>0$, $c>0$ and $\beta>0$ such that, for $\varepsilon$ small enough, one has for any $N\in\N^*$:}
\be
\label{condition_h}
\mathcal P_\varepsilon\left(
T_{\infty,[0,N]}  ( \alpha_{[0,N]} ) 
\geq\frac{4}{r}-\frac{a}{N^{b/2}}
\right)\leq \frac{c}{N^\beta} \, .
\ee
The constants $a,c$ in \eqref{condition_h_2} are different from those in \eqref{condition_h}, but $b$ and $\beta$ are the same.

\section{Renormalization scheme}\label{sec:renorm}

From now, we are going to focus on the last passage percolation model in order to prove Theorems  \ref{thm: passage upper} and \ref{th:passage_dilute}.
We first describe a renormalization procedure to show that a bound of the form \eqref{lower_bounds}
holds with high probability at every scale (see Proposition \ref{prop_block}  below).

%

\subsection{Definition of blocks}\label{sec:def_blocks}
Let $n\in\N^*$ be the renormalization ``level'' and $K_n=K_n(\varepsilon)$ the size of a renormalized block of level $n$ (by block we mean a finite subinterval of $\Z$). For $n=1$, we initialize $K_1=K_1(\varepsilon)$ and define a block $B$ of order $1$ to be good if it contains no defect, i.e. $\alpha(x)=1$ for every $x\in B$. Otherwise, the block is said to be bad.

\medskip

For $n\geq 1$, we set 
\be\label{def_l_gamma} 
{\color{black} K_{n+1}=l_n K_n, \qquad \text{with} \quad } 
l_n= \lfloor K_n^\gamma \rfloor\textcolor{black}{=l_n(\varepsilon)}\ee
with $\gamma\in(0,1)$. For $n\geq 1$, a block \textcolor{black}{$B$} of order $n+1$ has size $K_{n+1}$ and is partitioned  into $l_n$ disjoint blocks of level $n$ \textcolor{black}{whose size is $K_n$}. This block is called ``good''
if it contains at most one bad subblock of level $n$, and if condition \eqref{condition_good} below  on the maximum current in the block holds:
\be
\label{condition_good} 
j_{\infty,B_{n+1}} 
\geq j_{n+1}
\quad \text{with} \quad 
j_{n+1} : =  \frac{r}{4}+\frac{a}{K_{n+1}^{b/2}} ,
\ee
where the constants $a,b$ were defined in 
\textcolor{black}{\eqref{condition_h_2}}.
Otherwise $B_{n+1}$ is said to be bad. We stress the fact that the status (good or bad) of $B_{n+1}$ depends only on the disorder variables $\alpha_{B_{n+1}}$ in $B_{n+1}$ and not on the exponential times $Y_{i,j}$.

\medskip

The renormalization is built such that large blocks are good with high probability.
Let $\textcolor{black}{q_n=}q_n(\varepsilon)$ denote the probability under $\mathcal P_\varepsilon$ that 
the block $[0,K_{n}-1]\cap\Z$, at level $n$, is bad. \textcolor{black}{Since the distribution of $\alpha$ 
is invariant with respect to space shifts, $q_n(\varepsilon)$ is also, for any $x\in\Z$, the probability that 
$[x,x+K_{n}-1]\cap\Z$ is a bad block. In the rest of this paper, quantities $K_n$, $l_n$, $q_n$ will be written with or without explicit dependence on $\varepsilon$, depending on necessity.}
\begin{lemma}
\label{lemma_good}
Suppose that assumption \hh\,  holds and set 
$$
K^*(\varepsilon):=\left( \frac{2c}{\varepsilon} \right)^{\frac{1}{\beta+1}}, 
\quad
K_*:= 2+ (4c)^{ \frac{1}{\beta-\gamma(\beta+2)} },
$$
\begin{equation}
\label{eq: parameters}
\gamma_0:=\frac{\beta}{\beta+2}, 
\quad
\varepsilon_0:=\min\left\{1,2^{-\beta}c,
(2c)\left[3+(4c)^{-\frac{1}{\beta-\gamma(\beta+2)}} \right]^{\beta+1}
\right\},
\end{equation}
with the constants $c,\beta$ appearing in  \eqref{condition_h_2} and \eqref{condition_h}.
Then 
for all  $\gamma\in(0,\gamma_0)$ and $\varepsilon \leq \varepsilon_0$,  
{\color{black} there is an integer $K_1 (\varepsilon)$ in}
the interval $[K_*, K^*(\varepsilon)]$ such that 
%
\be
\label{conclusion_lemma_good}
\forall \varepsilon < \varepsilon_0, \qquad \lim_{n\to\infty}q_n(\varepsilon)=0 
\quad \text{and furthermore} \quad 
%
\lim_{\varepsilon\to 0}K_1(\varepsilon)=+\infty.
\ee 
\end{lemma}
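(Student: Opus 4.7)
The plan is to prove inductively on $n\geq 1$ the bound $q_n(\varepsilon) \leq 2c/K_n^\beta$, from which both conclusions of \eqref{conclusion_lemma_good} follow readily. The central step is a recursive inequality between successive scales, obtained by decomposing the failure modes of a level-$(n+1)$ block. Since the $l_n$ level-$n$ sub-blocks composing a level-$(n+1)$ block $B_{n+1}$ occupy disjoint intervals of $\Z$, the events ``the $i$-th sub-block is bad'' are independent, so a union bound yields $\mathcal P_\varepsilon(B_{n+1} \text{ contains at least two bad sub-blocks}) \leq \binom{l_n}{2} q_n(\varepsilon)^2 \leq \tfrac{1}{2} K_n^{2\gamma}\, q_n(\varepsilon)^2$. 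On the other hand, Assumption \h, in the form \eqref{condition_h_2} applied with $N=K_{n+1}$, directly bounds the probability that the maximum-current condition \eqref{condition_good} fails by $c/K_{n+1}^\beta$. Combining the two contributions gives
\[
q_{n+1}(\varepsilon) \;\leq\; \frac{K_n^{2\gamma}}{2}\, q_n(\varepsilon)^2 \;+\; \frac{c}{K_{n+1}^\beta}.
\]

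To propagate the inductive bound, I would substitute $q_n\leq 2c/K_n^\beta$ into the recursion: the first term becomes $2c^2/K_n^{2\beta-2\gamma}$. Since $K_{n+1}=l_n K_n \leq K_n^{1+\gamma}$, one has $c/K_n^{\beta(1+\gamma)} \leq c/K_{n+1}^\beta$, so $q_{n+1} \leq 2c/K_{n+1}^\beta$ will follow as soon as $2c^2/K_n^{2\beta-2\gamma} \leq c/K_n^{\beta(1+\gamma)}$, i.e.\ $2c \leq K_n^{\beta-\gamma(\beta+2)}$. The exponent $\beta-\gamma(\beta+2)$ is strictly positive precisely when $\gamma<\gamma_0=\beta/(\beta+2)$, which is exactly the assumption on $\gamma$. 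Since $K_n$ is non-decreasing in $n$, it suffices to require $K_1 \geq K_*$ with $K_*$ chosen so that $K_*^{\beta-\gamma(\beta+2)} \geq 4c$: this is exactly how $K_*$ is defined in \eqref{eq: parameters}, the factor $4c$ instead of $2c$ providing the slack needed to close the induction.

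For the base case $n=1$, a level-$1$ block is bad iff it contains at least one defect, so $q_1(\varepsilon) = 1-(1-\varepsilon)^{K_1} \leq \varepsilon K_1$. The requirement $\varepsilon K_1 \leq 2c/K_1^\beta$ is equivalent to $K_1 \leq (2c/\varepsilon)^{1/(\beta+1)} = K^*(\varepsilon)$. The induction can therefore be triggered provided $[K_*,K^*(\varepsilon)]$ contains an integer, i.e.\ as soon as $K^*(\varepsilon) \geq K_*+1$; this amounts exactly to the third term in the minimum defining $\varepsilon_0$, while the two other terms enforce minor compatibility constraints (such as $q_1<1$ together with a sanity condition used in the iteration).

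Once the uniform bound $q_n \leq 2c/K_n^\beta$ is established, both conclusions follow. Picking $K_1(\varepsilon)$ close to the upper end $K^*(\varepsilon) = (2c/\varepsilon)^{1/(\beta+1)}$ of the admissible interval yields $K_1(\varepsilon)\to\infty$ as $\varepsilon\to 0$. Moreover, once $K_n^\gamma \geq 2$, one has $l_n \geq 2$ and the scales grow at least geometrically, giving $q_n(\varepsilon) \leq 2c/K_n^\beta \to 0$. The main technical obstacle is the sharpness of the recursion above: the threshold $\gamma<\beta/(\beta+2)$ is tight, leaving no room for cruder estimates on $\binom{l_n}{2}$ or on the failure probability of \eqref{condition_good}, and it is for this reason that the specific algebraic forms of $K_*$, $K^*(\varepsilon)$ and $\varepsilon_0$ in \eqref{eq: parameters} are prescribed exactly so that the quadratic gain in $q_n^2$ just outpaces the combinatorial cost $l_n^2$ at each scale.
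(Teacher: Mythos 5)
Your proposal is correct and follows essentially the same route as the paper: the paper writes $\zeta_n=c/K_n^\beta$, derives the recursion $q_{n+1}\leq (l_nq_n)^2+\zeta_{n+1}$ together with $q_1\leq K_1\varepsilon$, and propagates the invariant $q_n\leq 2\zeta_n$ by checking $\zeta_{n+1}\geq 4l_n^2\zeta_n^2$, which is exactly your condition $K_n^{\beta-\gamma(\beta+2)}\geq 4c$; the choices $K_1\in[K_*,K^*(\varepsilon)]$ and $K_1(\varepsilon)=\lfloor K^*(\varepsilon)\rfloor$ then close the argument precisely as you describe. The only deviation is cosmetic: you use $\binom{l_n}{2}q_n^2$ instead of $(l_nq_n)^2$, gaining a harmless factor of two, and you correctly identify $2^{-\beta}c$ as enforcing $K^*(\varepsilon)\geq 2$ and the third term of $\varepsilon_0$ as enforcing $K^*(\varepsilon)\geq K_*+1$.
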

\begin{proof} 
For $n\geq 1$, let $\zeta_{n}=\frac{c}{K_{n}^{\beta}}$ be the upper bound in \eqref{condition_h}. 
Then, by definition of good blocks and independence of the environment, 
one obtains the recursive inequality
\begin{eqnarray*}
q_1 & \leq & K_1\varepsilon , \\
q_{n+1} & \leq & (l_n q_n)^2 + \zeta_{n+1},
\quad n\geq 1.
\end{eqnarray*}
\textcolor{black}{
Indeed, the first inequality is a union bound over the $K_1$ sites of the block for the probability $\varepsilon$ of each site
being a defect. The second inequality is a union bound over the $l_n(l_n-1)/2\leq l_n^2$ pairs of subblocks 
for the probability $q_n^2$ of both subblocks being bad at order $n$.
{\color{black} The last term $\zeta_{n+1}$ estimates from above the probability that the maximum current in the whole block does not satisfies  \eqref{condition_good} (see \eqref{condition_h_2} and \eqref{condition_h}).}
}

Note that if for some $n$, we have $q_n\leq 2 \zeta_n$ and $\zeta_{n+1}\geq 4 l_n^2 \zeta_n^2$, then $q_{n+1}\leq 2 \zeta_{n+1}$.
Thus if we have $q_1\leq 2\zeta_1$ and $\zeta_{n+1}\geq 4 l_n^2 \zeta_n^2$ 
for all $n\geq 1$, then $q_n\leq 2 \zeta_n$ 
for all $n\geq 1$, implying $q_n(\varepsilon)\to 0$ as $n\to\infty$  provided $K_1\geq 2$.

On the one hand, $q_1\leq 2 \zeta_1$ follows from {\color{black} $K_1\leq K^*(\varepsilon)$.}
%
%
On the other hand, 
$\zeta_{n+1}\geq 4 l_n^2 \zeta_n^2$ is equivalent to
$$
K_n^{\beta-(\beta+2)\gamma}\geq 4 c,\quad\forall n\geq 1.
$$
Assuming $0\leq\gamma<\frac{\beta}{\beta+2}$, 
since $K_n$ is increasing in $n$,  the above inequality holds for all $n\geq 1$ if it holds for $n=1$, which is equivalent to 
\be
\label{condition_2}
K_1\geq K':= (4c)^{\frac{1}{\beta-\gamma(\beta+2)}}.
\ee
Finally,  setting $K_*:=2+K'$, we have $1+K_* \leq K^*(\varepsilon)$  if  $\varepsilon \leq \varepsilon_0$.
{\color{black}
Thus \eqref{conclusion_lemma_good} is satisfied by choosing the sequence $K_1(\varepsilon):=\lfloor K^*(\varepsilon)\rfloor$. }
%
%
%
%
\end{proof}

\subsection{Mean passage time in a block}

The strategy to prove Theorem \ref{th_plateau} is now as follows. 
To each block 
$B=[x_0, x_1 := x_0+K_n-1]\cap\Z$ of  level $n$, we associate finite-size macroscopic restricted passage time functions (in the left and right directions) taking as origin either extremity of the block:
\begin{eqnarray}
\label{eq: path direction}
\begin{cases}
\tau^\alpha_{n,B}(1,y)  =  \Exp \left( \frac{1}{K_n} T_{B}^\alpha((x_0,0),(x_1, \lfloor K_n y\rfloor)) \right)
,\quad y\geq 0, \\
\tau^\alpha_{n,B}(-1,y) = \Exp \left( \frac{1}{K_n} T_{B}^\alpha((x_1,0),(x_0,\lfloor K_n y\rfloor)) \right)
,\quad y\geq 1,
\end{cases}
\end{eqnarray}
which depend only on the disorder $\alpha_{\textcolor{black}{B}}$.
To keep compact notation, we will write both functions in the form 
$\tau^\alpha_{\textcolor{black}{B}}(\sigma,y)$ with $\sigma = \pm 1$ and $y \geq \sigma^- = - \min\{\sigma,0\}$.

\medskip

The main step towards Theorem \ref{thm: passage upper}, stated in Proposition \ref{prop_block} below, is to prove that the mean passage time at each level $n$ remains bounded by 
the reference function \eqref{ref_time} with parameters $\rho_n,J_n$ appropriately controlled to ensure that the flat segment is preserved at each order.
\begin{proposition}
\label{prop_block}
For small enough $\varepsilon$, there exist sequences 
$(\rho_n=\rho_n(\varepsilon))_{n \geq 1}\in[0,1]^{ \N^*}$ 
and $(J_n=J_n(\varepsilon))_{n \geq 1}\in [0,+\infty)^{ \N^*}$
such that: 

\smallskip

\noindent
(i) {\color{black} Uniformly over 
good blocks $B$ at level $n$ and for every $\sigma\in\{-1,1\}$}\margincom{\textcolor{black}{$\alpha$ est fixe et B varie, donc plutot over good block ?}}
%
\be
\label{bound_on_good_blocks}
\forall y\geq \sigma^-, \qquad 
{\color{black} \sup_{\text{good} \ B}} 
\tau^\alpha_{n,B}(\sigma,y) \leq 
\tau^{\rho_n,J_n}(\sigma,y),
\ee
(ii) $\lim_{n\to\infty} J_n=r/4$ and $J_n>r/4$ for all $n\in\N^*$, \\
(iii) $\limsup_{n\to\infty}\rho_n<1/2$ \textcolor{black}{
and with the definition \eqref{def_limit_plateau} of $\rho_c(0)$
$$
\limsup_{\varepsilon\to 0}\limsup_{n\to+\infty}
\rho_n(\varepsilon)\leq \rho_c(0)= \frac{1}{2}(1-\sqrt{1-r}).
$$
}
%
%
%
\end{proposition}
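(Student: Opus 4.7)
I would prove the three items (i)--(iii) of Proposition~\ref{prop_block} simultaneously by induction on the renormalization level $n$, constructing explicit sequences $(\rho_n(\varepsilon),J_n(\varepsilon))$ via a deterministic recursion $(\rho_n,J_n)\mapsto(\rho_{n+1},J_{n+1})$. The input to the recursion at level $n{+}1$ comes from two places: the inductive cusp bound \eqref{bound_on_good_blocks} on level-$n$ good subblocks, and the good-block constraint \eqref{condition_good}, i.e.\ $j_{\infty,B_{n+1}}\ge r/4+a/K_{n+1}^{b/2}$, which controls the rare slow substretch inside $B_{n+1}$. In last-passage language via Lemma~\ref{lemma_reform}, the latter becomes an upper bound $T_{\infty,B_{n+1}}\le 4/r-a'/K_{n+1}^{b/2}$ and plays the role of the ``slow bond'' current.

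\textbf{Base case.} For $n=1$, a good block contains only rate-$1$ sites, so $\tau^\alpha_{1,B}$ is the restriction to a block of size $K_1$ of the homogeneous TASEP passage time given by \eqref{eq: homogeneous LPP}. A direct cusp fit $\tau^{\rho_1,J_1}(\sigma,y)\ge (\sqrt{K_1+\lfloor K_1 y\rfloor}+\sqrt{\lfloor K_1 y\rfloor})^2/K_1$ for all $y\ge\sigma^-$ can be arranged by taking $J_1$ just below the finite-size maximum current $1/T_{\infty,[0,K_1]}$ (which differs from $1/4$ by $O(1/K_1)$) and $\rho_1$ just above the density at which the parabola $\rho(1{-}\rho)$ meets the level $J_1$. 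By Lemma~\ref{lemma_good}, $K_1(\varepsilon)\to\infty$ as $\varepsilon\to 0$, so $J_1(\varepsilon)\to 1/4$ and $\rho_1(\varepsilon)\to\rho_c(0)$ in the dilute limit.

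\textbf{Inductive step.} Assuming the level-$n$ bound, I would decompose a good block $B_{n+1}$ into its $l_n$ level-$n$ subblocks, at most one of which is bad. Any path contributing to $T^\alpha_{B_{n+1}}$ traverses these subblocks in order, with some vertical allocation $y=\sum_i y_i$. On the good subblocks the induction hypothesis yields a cusp bound with parameters $(\rho_n,J_n)$; on the bad subblock one uses the slow-bond-type bound deduced from the good-block condition on $B_{n+1}$. Summing the bounds, optimizing over the allocation $(y_i)$ using concavity of each cusp, and absorbing the cost of patching segments together via the fluctuation estimate of Section~\ref{sec: prop passage time}, one obtains a cusp bound at level $n{+}1$ with
\begin{equation*}
J_{n+1}\ \ge\ J_n - C\,K_n^{-b/2},\qquad \rho_{n+1}\ \le\ \rho_n + C\,K_n^{-b/2},
\end{equation*}
for a constant $C$ independent of $n$. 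Since $K_n$ grows super-exponentially ($K_{n+1}=\lfloor K_n^\gamma\rfloor K_n$), the correction terms $K_n^{-b/2}$ are summable, so $J_n\downarrow J_\infty$ and $\rho_n\uparrow\rho_\infty$ with $J_\infty\ge r/4$ and $\rho_\infty<1/2$ provided $\rho_1$ stays uniformly bounded away from $1/2$, the latter being guaranteed for $\varepsilon\le\varepsilon_0$. The good-block lower bound $J_{n+1}\ge r/4+a/K_{n+1}^{b/2}$ (inherited from \eqref{condition_good}) both ensures $J_n>r/4$ at every finite stage and forces $J_\infty=r/4$, giving (ii); summing the $\rho$-increments gives (iii), with the dilute-limit statement following from $\rho_1(\varepsilon)\to\rho_c(0)$ and $\sum_n K_n^{-b/2}\to 0$ as $\varepsilon\to 0$ (because $K_1(\varepsilon)\to\infty$).

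\textbf{Main obstacle.} The delicate point is exactly the borderline issue stressed in the introduction: the maximum current of a good block at scale $n{+}1$ and the maximum current of the bad subblock of scale $n$ inside it both approach $r/4$ at the \emph{same} polynomial rate. The recursion is therefore only non-degenerate because the exponent $b$ in assumption $\mathbf{(H)}$ is strictly less than $2$, which is just enough to keep the ``slow-bond gap'' $j_{n+1}-r/4\asymp K_{n+1}^{-b/2}$ strictly larger than the pasting and fluctuation cost at the same scale. Making this quantitative, and propagating it through infinitely many scales without $J_n$ drifting through $r/4$ or $\rho_n$ blowing up to $1/2$, is the technical heart of the argument, and is where the renormalization analysis of Section~\ref{sec:other_proofs} together with the fluctuation estimate of Section~\ref{sec: prop passage time} is essential.
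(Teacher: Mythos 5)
Your overall scheme (induction on the renormalization level, skeleton decomposition into $l_n$ subblocks, crude bound on the one possibly bad subblock via the level-$(n{+}1)$ good-block constraint, fluctuation/entropy cost absorbed as an error term) does match the paper's strategy, and you correctly identify the borderline $b<2$ issue as the technical heart. But there is a critical error in your base case that would make the iteration fail.

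You take $J_1$ just below the level-$1$ maximum current $1/T_{\infty,[0,K_1]}\approx 1/4$, and then claim $\rho_1(\varepsilon)\to\rho_c(0)$. These two statements are incompatible. The cusp function $\tau^{\rho_1,J_1}(\sigma,y)=(\rho_1^\sigma-\sigma^-+y)/J_1$ is a line of slope $1/J_1$, and for it to dominate the concave parabola $g_1(\sigma,y)=(\sqrt{\sigma+y}+\sqrt{y})^2$ for all $y$, it must be the tangent line with that slope. The intercept forces $\rho_1$ to be the smaller root of $\rho(1-\rho)=J_1$. With $J_1\approx 1/4$ that gives $\rho_1\approx 1/2$, not $\rho_c(0)$; and since your $\rho$-recursion only increases $\rho_n$, you would immediately lose (iii). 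The paper's (essential) trick is to take $J_n:=j_{n+1}=r/4+aK_{n+1}^{-b/2}$, which is \emph{far below} the natural maximum current of a level-$n$ good block. One deliberately uses a much looser tangent bound at level $n$ because its slope $1/j_{n+1}$ is exactly what the bad subblock at level $n{+}1$ will force anyway; the payoff is that $\rho_1$ solves $\rho(1-\rho)=j_2\to r/4$, hence $\rho_1(\varepsilon)\to\rho_c(0)$. Your cusp fit optimized to the level-$1$ block throws away exactly this slack.

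A secondary point: your additive recursion $\rho_{n+1}\le\rho_n+CK_n^{-b/2}$ misses the structure the paper actually derives, namely the contractive form
$$\rho_{n+1}\ \le\ \frac{j_{n+2}}{j_{n+1}}\Bigl[\bigl(1-\tfrac{1}{l_n}\bigr)\rho_n+\tfrac{1}{l_n}+\Delta_n\Bigr],$$
coming from the fact that exactly one of the $l_n=\lfloor K_n^\gamma\rfloor$ subblocks is sacrificed. This yields increments of order $(1-\rho_n)K_n^{-\gamma}+\Delta_n$ (with $\Delta_n$ a more delicate fluctuation term), not $K_n^{-b/2}$; in particular the dominant rate is controlled by the free coarse-graining exponent $\gamma$, not by $b$. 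The iteration then gives $\rho_n\le\rho_1\prod_i a_i+(1-\prod_i a_i)+\sum_i\Delta_i$ with $a_i=1-1/l_i$, and what one actually needs is not merely summability but that $\prod a_i\to 1$ and $\sum\Delta_i\to 0$ as $\varepsilon\to 0$ (Lemma~\ref{lemma_induction}), which is what makes $\limsup_n\rho_n(\varepsilon)\to\rho_c(0)$. Your additive-increment bookkeeping is too coarse to produce this.

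Finally, the paper splits the argument into a nonparametric recursion $\tau^\alpha_{n,B}\le g_n$ (Proposition~\ref{prop:induction_gen}) followed by an explicit parametrization $J_n:=j_{n+1}$, $\rho_n^\sigma:=\sup_y\{j_{n+1}g_n(\sigma,y)-y\}+\sigma^-$; you fold these into one step. That organizational difference is fine, but the choice of $J_n$ and the contractive $\rho$-recursion are not optional.
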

Once Proposition \ref{prop_block} is established, completing the proof of Theorem \ref{thm: passage upper} (and thus Theorem \ref{th_plateau}) is a relatively simple task,
which boils down to obtain a similar bound on {\it unrestricted} passage times (see  Section \ref{sec:completion}). 
\textcolor{black}{The upper bound $\tau^{\rho_n,J_n}$ is the counterpart, in the last passage percolation setting, of the modified flux $f^{\rho_n,J_n}$  depicted Figure \ref{fig: flux truncated}.}

{\color{black} 
The  derivation of Theorem \ref{th_dilute_limit} relies on  a refined version of \eqref{bound_on_good_blocks}
in the dilute limit.}
%
\begin{proposition}\label{prop_dilute_limit}
For every $\sigma\in\{-1,1\}$ and \textcolor{black}{$\sigma^-\leq y<\sigma^+y_1^1(0)-\sigma^-y_1^{-1}(0)$}
\be\label{dilute_g}
\limsup_{\varepsilon\to 0}\limsup_{n\to+\infty}
{\color{black} \sup_{\text{good} \ B }} \;
\tau^\alpha_{n,B}(\sigma,y) 
\leq(\sqrt{\sigma+y}+\sqrt{y})^2.
\ee
\margincom{\textcolor{black}{idem}} where  $y_1(0)$ was defined in \eqref{def_y1}.
\end{proposition}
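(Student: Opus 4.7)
The strategy is to rerun the scale-by-scale renormalization underlying Proposition~\ref{prop_block}, but with a \emph{sharper} induction hypothesis valid in the dilute limit: for $y$ strictly below the kink $y_1^\sigma(0)$, the linear reference bound $\tau^{\rho_n,J_n}(\sigma,y)$ is replaced by the homogeneous rate-$1$ wedge LPP value $(\sqrt{\sigma+y}+\sqrt{y})^2$. The base case is automatic, since by definition a good block at level $n=1$ contains no defect, so $\alpha\equiv 1$ on $B$ and $T^\alpha_B$ is the standard rate-$1$ wedge LPP; the shape theorem \eqref{eq: homogeneous LPP} then yields
\[
\frac{1}{K_1}\Exp\bigl[T^{\alpha\equiv 1}_B\bigl((x_0,0),(x_1,\lfloor K_1 y\rfloor)\bigr)\bigr] \;\longrightarrow\; (\sqrt{1+y}+\sqrt{y})^2 \quad\text{as }K_1\to\infty,
\]
and by Lemma~\ref{lemma_good} we can choose $K_1(\varepsilon)\to\infty$ as $\varepsilon\to 0$; the case $\sigma=-1$ is symmetric.

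For the inductive step $n\to n+1$ I would apply the recursion from Section~\ref{subsec:outline}: on a good $B_{n+1}$ partitioned into its $l_n$ level-$n$ subblocks $B^{(i)}$, of which at most one, $B^{(i^*)}$, is bad, one obtains
\[
\tau^\alpha_{n+1,B_{n+1}}(\sigma,y)\;\le\;\sup_{(\Delta y_i):\,\sum_i \Delta y_i=l_n y}\;\frac{1}{l_n}\sum_{i=1}^{l_n}\tau^\alpha_{n,B^{(i)}}(\sigma,\Delta y_i)+(\text{fluctuation}),
\]
where the fluctuation correction comes from the estimates of Section~\ref{sec: prop passage time}. On good subblocks I apply the inductive homogeneous bound; on $B^{(i^*)}$ I apply the reference bound $\tau^{\rho_n,J_n}(\sigma,\cdot)\to\tau^{\rho_c(0),r/4}(\sigma,\cdot)$ provided by Proposition~\ref{prop_block}. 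The resulting deterministic optimization reduces to a Lagrange problem involving a concave function (whose derivative decreases monotonically from $+\infty$ at $0$ and passes through $4/r$ precisely at $y=y_1^\sigma(0)$, as one checks by a short computation using the identity $\rho_c(0)(1-\rho_c(0))=r/4$) and an affine function with constant slope $4/r$ that is tangent to it at the kink. For $y<y_1^\sigma(0)$ the constrained supremum is attained on the boundary where the bad subblock contributes no vertical height ($\Delta y_{i^*}=0$), and the good subblocks equidistribute; the value is $(1-1/l_n)(\sqrt{\sigma+y}+\sqrt{y})^2$ plus a residual $O(1/l_n)$ coming from the nonzero intercept of the bad-subblock bound.

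\textbf{Main obstacle.} The slack $4/r-\partial_y(\sqrt{1+y}+\sqrt{y})^2$ degenerates to zero as $y\uparrow y_1^\sigma(0)$, so the ``no vertical height in the bad subblock'' strategy is robustly optimal only \emph{strictly} below the kink. Moreover, the recursion picks up an additive error of order $1/l_n$ at each step plus the fluctuation correction, and these must not accumulate across the infinitely many scales. Since $K_n$ grows doubly exponentially via $K_{n+1}=l_n K_n$ with $l_n=\lfloor K_n^\gamma\rfloor$, both $\sum_n 1/l_n$ and $\sum_n K_n^{-b/2}$ (with $b\in[1,2)$ from \hh) are summable, and their total tends to zero as $K_1(\varepsilon)\to\infty$. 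The induction therefore yields $\tau^\alpha_{n,B}(\sigma,y)\le(\sqrt{\sigma+y}+\sqrt{y})^2+\eta_n(\varepsilon)$ uniformly over good $B$, with $\lim_{\varepsilon\to 0}\limsup_{n\to\infty}\eta_n(\varepsilon)=0$, and \eqref{dilute_g} follows by passing to the double limit.
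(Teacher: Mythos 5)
Your overall strategy matches the paper's: for $y$ strictly below the dilute kink, the renormalization recursion stays in the regime where all vertical height is allocated to the good subblocks, producing telescoping factors $(1-1/l_n)$ and $\frac{l_n}{l_n-1}$ that converge to $1$, while the residuals $\sum 1/l_n$ and the fluctuation corrections $\sum\delta_n\varphi$ vanish as $\varepsilon\to 0$. The paper implements this through the already-established concave recursion of Proposition~\ref{prop:induction_gen}: it checks (Proposition~\ref{prop:passage_dilute}, proved in Section~\ref{sec:other_proofs}) that when $y<y_1$ the supremum in \eqref{recursion_gen} is achieved at $\bar y = \tfrac{l_n}{l_n-1}y$, so \eqref{recursion_gen} reduces to $g_{n+1}(y)=(1-l_n^{-1})g_n\bigl(\tfrac{l_n}{l_n-1}y\bigr)+\tfrac{1}{l_n j_{n+1}}+\delta_n\varphi(y)$, and then iterates this back to $g_1$.

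There is one genuine gap in your inductive step. You invoke the bound $\tau^{\rho_n,J_n}(\sigma,\cdot)$ of Proposition~\ref{prop_block} on the bad subblock $B^{(i^*)}$, but that proposition is stated uniformly over \emph{good} blocks at level $n$, and $B^{(i^*)}$ is by definition not good, so the bound is not available. The paper handles the bad subblock differently: via the crude vertical comparison \eqref{crude_vertical_bound}, the horizontal crossing of $B^{(i^*)}$ is dominated (by artificially prolonging the path) by a vertical crossing of height $\tilde y_{i^*}+\tfrac{1+\sigma}{2}K_n$, whose per-unit-height cost is $1/j_{\infty,B^{(i^*)}}$; and $j_{\infty,B^{(i^*)}}\geq j_{\infty,B_{n+1}}\geq j_{n+1}$ because the maximal current is nonincreasing under enlargement of the box and $B_{n+1}$ is good. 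So the quantity that controls the bad subblock is the goodness of the \emph{parent} block, not any property of $B^{(i^*)}$ itself. With this correction your residual is still $O\bigl(1/(l_n j_{n+1})\bigr)$ per scale, and the conclusion survives. A smaller remark: the base case should be stated as the exact inequality $\tau^\alpha_{1,B}(\sigma,y)\leq(\sqrt{\sigma+y}+\sqrt{y})^2$ valid for every finite $K_1$ (restricted $\leq$ unrestricted passage times plus superadditivity), rather than only as a $K_1\to\infty$ limit: the recursion at fixed $\varepsilon$ runs with $K_1(\varepsilon)$ fixed, and the exact bound propagates cleanly whereas the asymptotic version would introduce a $K_1$-dependent error that must then be tracked.
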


\subsection{Coarse-graining and recursion}
\label{subsec:coarse}


The strategy of the proof of Propositions \ref{prop_block} and \ref{prop_dilute_limit} is based on a coarse-graining procedure.
\textcolor{black}{
We will first show a general estimate for any good block $B$ at level $n \geq 1$ and 
$\sigma\in\{-1,1\}\textcolor{black}{:}$\margincom{\textcolor{black}{idem}}\textcolor{black}{
\be
\label{nonparbound}
\forall y\geq\sigma^-, \qquad 
\sup_{\text{good} \ B}\;
 \tau^\alpha_{n,B}(\sigma,y)\leq g_n(\sigma,y),
\ee
}
where $g_n(\sigma, \cdot)$ is a sequence of concave functions defined recursively in Proposition \ref{prop:induction_gen}. Then Propositions \ref{prop_block} and \ref{prop_dilute_limit} will be deduced from \eqref{nonparbound}.
}
\begin{proposition}
\label{prop:induction_gen}
{\color{black} Fix $C$ a large enough constant and set}
\be
\label{eq: delta n}
j_{n+1}:=\frac{r}{4}+\frac{a}{K_{n+1}^{b/2}}
,\quad l_n := \lfloor K_n^\gamma \rfloor \quad 
\mbox{ and }\quad
\delta_n:= C\frac{(\log K_{n+1})^{3/2}}{\sqrt{K_n}}\textcolor{black}{.}
\ee
%
%
\textcolor{black}{Then,} the sequence $(g_n)_{n \geq 1}$ defined on $[\sigma^-,+\infty)$ by 
\margincom{CH 31/10 nouveau $g_1$ de la limite dilu\'ee}
\be\label{level_1}
g_1(\sigma,y):=(\sqrt{\sigma+y}+\sqrt{y})^2
\ee
and 
\begin{align}
g_{n+1}(\sigma,y)  :=   \sup_{ \sigma^- \leq\bar{y}\leq \frac{l_n}{(l_n-1)} y}
\left\{
\left(1-\frac{1}{l_n}\right)\left[
g_n(\sigma,\bar{y})- \frac{\bar{y}}{j_{n+1}}
\right] \right\} 
+ \frac{y}{j_{n+1}}+  \frac{1+\sigma}{2l_nj_{n+1}}\label{recursion_gen}\\
\textcolor{black}{+\delta_n \varphi(y),}\nonumber
%
\end{align}
where
\begin{equation}
\label{eq: phi}
\varphi(y):=\sqrt{\frac{\sigma}{2}+y}\left[2 +\log (1+y)\right]^{3/2},
\end{equation}
satisfies the bound  \eqref{nonparbound}
for any good block $B$ and $n \geq 1$.
\end{proposition}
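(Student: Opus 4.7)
The plan is to prove Proposition~\ref{prop:induction_gen} by induction on the renormalization level $n$.

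\textbf{Base case $n=1$.} A good block $B$ at level $1$ contains no defect, so $T^\alpha_B$ is a homogeneous exponential LPP passage time inside $B$. Fekete's lemma applied to the superadditive quantity $\Exp[T^\alpha((0,0),(Nx,Ny))]$ gives $\Exp[T_B^\alpha/K_1]\leq\tau(\sigma,y)=(\sqrt{\sigma+y}+\sqrt{y})^2=g_1(\sigma,y)$.

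\textbf{Inductive step.} Fix a good block $B_{n+1}$ at level $n+1$ and partition it into $l_n$ subblocks $B^{(1)},\dots,B^{(l_n)}$ of level $n$; by goodness, at most one subblock (say $B^{(i_0)}$) is bad and $j_{\infty,B_{n+1}}\geq j_{n+1}$. Consider $\sigma=+1$ (the case $\sigma=-1$ is analogous). Every path from $(x_0,0)$ to $(x_1,m)$ inside $B_{n+1}$ crosses each subblock exactly once, entering $B^{(i)}$ at height $h_{i-1}$ and exiting at $h_i$, giving the max-plus decomposition
\begin{equation*}
T^\alpha_{B_{n+1}}\bigl((x_0,0),(x_1,m)\bigr)=\max_{0=h_0\leq\cdots\leq h_{l_n}=m}\sum_{i=1}^{l_n}T^{(i)}(h_{i-1},h_i),
\end{equation*}
where $m=\lfloor K_{n+1}y\rfloor$ and $T^{(i)}(a,b)$ is the passage time across $B^{(i)}$ at entry/exit heights $a,b$. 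Since the $B^{(i)}$ occupy disjoint columns, the $T^{(i)}$ are independent. The crucial \emph{swap of max and expectation} is provided by the fluctuation estimate of Section~\ref{sec: prop passage time}: concentration on each $T^{(i)}$ combined with a union bound over the $O(m^{l_n-1})$ height vectors yields
\begin{equation*}
\Exp\Bigl[\max_{\vec h}\sum_i T^{(i)}(h_{i-1},h_i)\Bigr]\leq\max_{\vec h}\sum_i\Exp\bigl[T^{(i)}(h_{i-1},h_i)\bigr]+K_{n+1}\,\delta_n\,\varphi(y),
\end{equation*}
the factor $(\log K_{n+1})^{3/2}$ in $\delta_n$ being the price of the union bound.

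\textbf{Subblock bounds and optimization.} For each good $B^{(i)}$, the induction hypothesis gives $\Exp[T^{(i)}(h_{i-1},h_i)]\leq K_n\,g_n\bigl(+1,(h_i-h_{i-1})/K_n\bigr)$. For the bad subblock, any path inside $B^{(i_0)}$ from its left edge to its right edge may be extended within $B_{n+1}$ by $K_n-1$ additional $(-1,1)$ steps, producing a purely vertical path of $B_{n+1}$; by nonnegativity of the service times,
\begin{equation*}
T^{(i_0)}(h_{i_0-1},h_{i_0})\leq T^\alpha_{B_{n+1}}\bigl((a^{(i_0)},h_{i_0-1}),(a^{(i_0)},h_{i_0}+K_n-1)\bigr),
\end{equation*}
whose expectation is at most $(h_{i_0}-h_{i_0-1}+K_n)/j_{n+1}$ by stationarity in the $j$-coordinate, superadditivity in $m$, identity \eqref{max_current_lpp}, and $j_{\infty,B_{n+1}}\geq j_{n+1}$. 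For $\sigma=-1$, the extension uses $(1,0)$ steps at no vertical cost; this is the reason the correction $(1+\sigma)/(2l_n j_{n+1})$ disappears for $\sigma=-1$. Finally, set $\bar y_i:=(h_i-h_{i-1})/K_n$ and divide by $K_{n+1}$. By concavity of $g_n(\sigma,\cdot)$ (inherited inductively from $g_1$ via the concavity-preserving sup in \eqref{recursion_gen}), the sum $\sum_{i\neq i_0}g_n(\sigma,\bar y_i)$ under the constraint $\sum_i \bar y_i=m/K_n$ is maximized when the $l_n-1$ variables $\bar y_i$ ($i\neq i_0$) share a common value $\bar y$. Then $\bar y_{i_0}=l_n y-(l_n-1)\bar y$, the constraint $\bar y_{i_0}\geq\sigma^-$ becomes $\bar y\leq \tfrac{l_n}{l_n-1}y$, and substitution recovers exactly \eqref{recursion_gen}.

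\textbf{Main obstacle.} The delicate step is the swap of max and expectation: one needs concentration for each subblock passage time that is sharp and uniform in the entry/exit heights $(h_{i-1},h_i)$, so that a union bound over the exponentially many $\vec h$ still produces an error of only $O(\delta_n\varphi(y))$ per unit length. This precise fluctuation estimate is the content of Section~\ref{sec: prop passage time} and is the technical heart of the renormalization scheme — any looser bound would propagate catastrophically through the recursion and prevent the survival of the flat segment at all scales.
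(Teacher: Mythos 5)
Your proposal follows the same overall strategy as the paper: induction on the renormalization level, a skeleton/height decomposition across subblocks, the inductive bound on good subblocks, the crude vertical bound for the (at most one) bad subblock via $j_{\infty,B_{n+1}}\geq j_{n+1}$, concavity to reduce the optimization to a single variable $\bar y$, and the fluctuation estimate of Section~\ref{sec: prop passage time} to swap max and expectation. The base case, the bad-block bound, the origin of the term $(1+\sigma)/(2l_nj_{n+1})$, and the appearance of the constraint $\bar y\leq\tfrac{l_n}{l_n-1}y$ are all correctly identified and match the paper.

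However, there is a genuine gap in the path decomposition. Your starting identity
\[
T^\alpha_{B_{n+1}}\bigl((x_0,0),(x_1,m)\bigr)=\max_{0=h_0\leq\cdots\leq h_{l_n}=m}\sum_{i=1}^{l_n}T^{(i)}(h_{i-1},h_i)
\]
and the claim that ``every path crosses each subblock exactly once'' are not correct. With increments $(1,0)$ and $(-1,1)$, a path can oscillate back and forth across a subblock boundary (e.g.\ $lK_n-1\to lK_n\to lK_n-1\to\cdots$), so the portion of a path in $B^{(i)}$ need not be a single sub-path with well-defined entry/exit heights. The right-hand side above is therefore in general a \emph{lower} bound for $T^\alpha_{B_{n+1}}$, while the proposition requires an \emph{upper} bound, so an estimate of the right-hand side does not control the left. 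The paper avoids this precisely by splitting each subblock's contribution into a horizontal crossing $U^\alpha_{B,l}$ (from the left edge of $B_l$ to the column $\sigma(lK_n-2)$, strictly before the boundary column) and a vertical piece $V^\alpha_{B,l}$ in the boundary column $\sigma(lK_n-1)$, where $V^\alpha_{B,l}$ is defined as a passage time in the whole block $B$ so that it absorbs any back-and-forth across the boundary; the ordered renewal times $k_{2i+1}<k_{2i+2}$ (first/last visit of the boundary column) make $T^\alpha_B(\gamma)\leq U+V$ an exact inequality for every path, yielding \eqref{double_max}. This same $U/V$ separation, via disjoint column ranges for the $U$'s and disjoint height ranges for the $V$'s, is also what guarantees the independence needed for Lemma~\ref{cor_max} and Proposition~\ref{prop:fluct}; if one lumps all of each subblock into a single $T^{(i)}$ one cannot simultaneously retain exactness of the decomposition and independence of the pieces. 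You should repair the decomposition along these lines; the rest of your argument then goes through as written.
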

The proof of this proposition is postponed to Section \ref{subsec:outline}.
The recursion  \eqref{recursion_gen} between $g_n$ and $g_{n+1}$ is obtained by decomposing
  a path at level $n+1$  
into subbpaths contained in  subblocks of size $K_n$. We then express the total passage time as a maximum of a sum of the partial passage times in each subblock, where the maximum is over all possible intermediate heights of the path at the interfaces. 
The \textcolor{black}{term $\delta_n \varphi$ on the last line of \eqref{recursion_gen}}
is a fluctuation estimate (see Proposition \ref{prop:fluct} below) on the difference between the expectation of the maximum of partial times and the maximum of the expectations. The  \textcolor{black}{first line of the r.h.s. of \eqref{recursion_gen}} comes from approximating each partial passage time with its mean and using the induction hypothesis \eqref{nonparbound}.

\bigskip

{\color{black}
\textcolor{black}{After proving Proposition \ref{prop:induction_gen}, to pursue the proof of Proposition \ref{prop_block},
we will} bound the functions $g_n$ in terms of the reference function $\tau^{\rho_n,J_n}$\textcolor{black}{:}
\begin{equation}
\label{eq: parametric form}
g_n(\sigma, y) \leq \frac{\rho_n^\sigma - \sigma^- +y}{J_n}=\tau^{\rho_n^\sigma,J_n}(\sigma,y)\leq \tau^{\rho_n,J_n}(\sigma,y),
\end{equation}
where
\be
\label{better_J} 
J_n :=j_{n+1},
\quad 
\rho_n^\sigma :=\sup_{y\geq \sigma^-} \Big\{
j_{n+1} \,g_n(\sigma, y)-y
\Big\} + \sigma^-,\quad
\rho_n:=\max(\rho_n^1,\rho_n^{-1}).
\ee
\textcolor{black}{(note that \eqref{eq: parametric form} is a trivial consequence of \eqref{better_J}).}
We must then show that $\rho_n$ and $J_n$ satisfy (ii) and (iii) of Proposition \ref{prop_block}.
To this end, in Section \ref{sec:other_proofs}, we will prove Propositions \ref{prop:estimation_gen} and \ref{prop: estimation suites} below
(recall that $l_n$, $\Delta_n$ and $\rho_n$ actually depend on $\varepsilon$):

\begin{proposition}
\label{prop:estimation_gen}
Assume $\hh$ with $b\in[1,2)$.
Then for $\varepsilon$ small enough,
%
%
the sequence $(\rho_n^\sigma)_{n\in\N^*}$ defined in \eqref{better_J} satisfies
\be
\label{better_recursion}
\rho_{n+1}^\sigma \leq \frac{j_{n+2}}{j_{n+1}}\left[
\left( 1-\frac{1}{l_n}  \right)
\rho_n^\sigma +\frac{1}{l_n}+\Delta_n
\right],
\ee
where $\Delta_n=\Delta_n(\varepsilon)$ has the following property: there exist $\varepsilon_1>0$ and $C>0$ such that for every $0<\varepsilon\leq\varepsilon_1$ and $n\geq 1$
\be
\label{def_Delta_n}
\Delta_n \leq C \, j_{n+1}\frac{\delta_n^2}{2(j_{n+2}^{-1}-j_{n+1}^{-1})}
\left[\log\left(
\frac{\delta_n}{j_{n+2}^{-1}-j_{n+1}^{-1}}
\right)\right]^{3},
\ee
with $\delta_n$ as in \eqref{eq: delta n}.
\end{proposition}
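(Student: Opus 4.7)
The plan is to start from the definition
$$\rho_{n+1}^\sigma - \sigma^- = \sup_{y\geq \sigma^-}\bigl\{ j_{n+2}\, g_{n+1}(\sigma,y) - y\bigr\}$$
and substitute the recursion \eqref{recursion_gen}. First I would drop the constraint $\bar y\leq l_n y/(l_n-1)$ in the inner supremum, which only weakens the bound and lets me identify
$$\sup_{\bar y\geq \sigma^-}\bigl[g_n(\sigma,\bar y) - \bar y/j_{n+1}\bigr] = \frac{\rho_n^\sigma - \sigma^-}{j_{n+1}}$$
directly from \eqref{better_J}. After multiplying by $j_{n+2}$ and subtracting $y$, the only $y$-dependence left is a linear term $(\lambda-1)y$, where $\lambda := j_{n+2}/j_{n+1}<1$, plus the fluctuation term $j_{n+2}\delta_n\varphi(y)$.

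Taking the sup in $y\geq \sigma^-$ and writing $\mu := 1-\lambda = j_{n+2}(j_{n+2}^{-1}-j_{n+1}^{-1})>0$, I would obtain
$$\rho_{n+1}^\sigma - \sigma^- \leq \lambda\Bigl(1-\tfrac{1}{l_n}\Bigr)(\rho_n^\sigma - \sigma^-) + \lambda\frac{1+\sigma}{2l_n} + S,\qquad S := \sup_{y\geq \sigma^-}\bigl\{ j_{n+2}\delta_n \varphi(y) - \mu y\bigr\}.$$
Matching this with the target form \eqref{better_recursion} via straightforward algebra gives the requirement $\lambda\Delta_n \geq S + \sigma^-(1-\lambda)$; since $(1-\lambda) = O(K_{n+1}^{-b/2})$ is much smaller than the claimed right-hand side of \eqref{def_Delta_n}, the $\sigma=-1$ correction is harmless and the whole problem reduces to estimating $S$.

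The heart of the proof is therefore the calculus estimate on $S$. Since $\varphi(y) = \sqrt{\sigma/2+y}\,[2+\log(1+y)]^{3/2}\sim \sqrt{y}(\log y)^{3/2}$ for large $y$, the maximizer $y^*$ of $j_{n+2}\delta_n\varphi(y) - \mu y$ satisfies, to leading order,
$$\frac{j_{n+2}\delta_n (\log y^*)^{3/2}}{2\sqrt{y^*}} \;\sim\; \mu,\qquad \text{hence}\qquad \sqrt{y^*} \;\sim\; \frac{j_{n+2}\delta_n (\log y^*)^{3/2}}{2\mu},$$
so $\log y^* = O\bigl(\log\bigl(\delta_n/(j_{n+2}^{-1}-j_{n+1}^{-1})\bigr)\bigr)$. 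The first-order condition forces $j_{n+2}\delta_n\varphi(y^*)=2\mu y^* + (\text{lower order})$, and plugging back gives
$$S \;\leq\; C\,\frac{j_{n+2}^2\delta_n^2}{\mu}\bigl[\log\bigl(\delta_n/(j_{n+2}^{-1}-j_{n+1}^{-1})\bigr)\bigr]^{3} \;=\; C\,\frac{j_{n+2}\,\delta_n^2}{j_{n+2}^{-1}-j_{n+1}^{-1}}\bigl[\log\bigl(\delta_n/(j_{n+2}^{-1}-j_{n+1}^{-1})\bigr)\bigr]^{3}.$$
Dividing by $\lambda$ introduces the factor $j_{n+1}$ in front, yielding \eqref{def_Delta_n}.

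The main obstacle I anticipate is the careful tracking of the logarithmic prefactors in this calculus step: the subleading $\sqrt{\sigma/2+y}$ factor of $\varphi$ near $y=\sigma^-$, the additive $2$ inside $[2+\log(1+y)]^{3/2}$, and the fact that $\log y^*$ itself depends on $y^*$. These force a small-$\varepsilon$ hypothesis: once $\varepsilon\leq\varepsilon_1$ is small enough, $\delta_n/\mu$ is large (since $\mu\to 0$ at a definite rate by assumption \hh\ with $b<2$), placing $y^*$ in the regime where the asymptotic expansion of $\varphi'$ is valid and monotone. All of the looseness is absorbed into the constant $C$ in \eqref{def_Delta_n}, and the $\sigma=-1$ extra term $(1-\lambda)/\lambda$ is dominated by the same right-hand side for $n\geq 1$.
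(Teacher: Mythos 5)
Your proof is correct, and it takes a route that is close in spirit to the paper's but genuinely simpler in one respect. The paper first proves Lemma~\ref{lemma_yn} to show that the maximizers $y_n$ of the Legendre transform defining $\rho_n^\sigma$ are nondecreasing, and hence lie in the regime where $g_{n+1}$ is computed from \eqref{gn_1} rather than \eqref{gn_2}; it then evaluates $\rho_{n+1}=j_{n+2}g_{n+1}(y_{n+1})-y_{n+1}$ directly, discards the negative term $(\lambda-1)y_{n+1}$, and arrives at $\Delta_n=j_{n+1}\delta_n\varphi(y_{n+1})$. You instead drop the constraint $\bar y\leq l_n y/(l_n-1)$ in \eqref{recursion_gen} (an honest upper bound), decouple the double supremum so that the $\bar y$-sup gives exactly $(\rho_n^\sigma-\sigma^-)/j_{n+1}$, and reduce everything to the calculus estimate of $S=\sup_y\{j_{n+2}\delta_n\varphi(y)-\mu y\}$. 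The first-order condition $\varphi'(y^*)=\mu/(j_{n+2}\delta_n)$ you solve is precisely $\varphi'(y^*)=t_{n+1}$, the same equation as in Lemma~\ref{lemma_yn}; your bound $S\lesssim \mu y^*$ in fact keeps the negative $-\mu y^*$ term the paper drops, so your estimate is marginally tighter, while both match \eqref{def_Delta_n} up to constants. What your route buys is that no monotonicity lemma is needed for this proposition, and the $\sigma=\pm1$ cases are handled uniformly via the $\sigma^-(1-\lambda)$ bookkeeping, which you correctly show is dominated by the right-hand side of \eqref{def_Delta_n} since $1-\lambda=O(K_{n+1}^{-b/2})$. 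The one caveat worth flagging: the paper's Lemma~\ref{lemma_yn} is also used elsewhere (notably in the proof of Proposition~\ref{prop:passage_dilute}), so while your shortcut is valid here, it does not render that lemma dispensable in the overall argument.
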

 Assumption \hh\, ensures that the decay of $j_n$ to $r/4$ is slow enough so that the additional fluctuations of order $\Delta_n$ do not hinder property (ii) of Proposition \ref{prop_block}. 
\textcolor{black}{
Implicit in the statement of Proposition \ref{prop:estimation_gen} is the parameter $\gamma\in(0,1)$ defined by \eqref{def_l_gamma}, which regulates the speed of growth 
of our renormalization blocks. Recall that \eqref{eq: parameters} requires $\gamma$ small enough for good blocks to be typical.
}
For \textcolor{black}{Proposition \ref{prop:estimation_gen}} to be useful, $\gamma$ has to be chosen \textcolor{black}{possibly even closer} to 0, so that the upper bound \eqref{def_Delta_n}
vanishes in the limit $n \to \infty$. \textcolor{black}{This is the content of the next proposition,
which} will imply Proposition \ref{prop_block}.}
\begin{proposition}
\label{prop: estimation suites}
Assume $\hh$ with $b\in[1,2)$ and $\gamma <\inf \{ \gamma_0, \frac{2}{b} -1 \}$ where $\gamma_0$ is 
introduced in \eqref{eq: parameters}. 
Then  for small enough $\varepsilon$,  
the sequences $(\rho_n=\rho_n(\varepsilon))_{n\geq 1}$  and $(J_n=J_n(\varepsilon))_{n\geq 1}$ defined in \eqref{better_J}
satisfy the statements of Proposition \ref{prop_block}.
%
%
\end{proposition}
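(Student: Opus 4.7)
My plan splits naturally into three steps: dispatching property (ii), iterating the linear recurrence \eqref{better_recursion}, and controlling the residual error terms (which I expect to be the main obstacle).

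\textbf{Step 1 (property (ii) and initial value $\rho_1^\sigma$).} Property (ii) is immediate: $J_n = j_{n+1} = r/4 + a K_{n+1}^{-b/2}$ and, under the scheme $K_{n+1} = l_n K_n$ with $l_n = \lfloor K_n^\gamma\rfloor$ and $K_1 \geq K_*$ from Lemma \ref{lemma_good}, the sequence $(K_n)$ grows super-exponentially, so $J_n > r/4$ for every $n$ and $J_n \downarrow r/4$. To prepare (iii), I would compute $\rho_1^\sigma$ explicitly. Since $g_1(\sigma,y) = (\sqrt{\sigma+y}+\sqrt{y})^2$ is the homogeneous wedge-LPP passage time, $\rho_1^\sigma$ is the Legendre conjugate at slope $1/j_2$ of the homogeneous flux $\rho \mapsto \rho(1-\rho)$. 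Substituting $u = \sqrt{1+y}+\sqrt{y}$ (so that $y = (u^2-1)^2/(4u^2)$ and $\sqrt{y(1+y)} = (u^4-1)/(4u^2)$), the critical equation $j_2 u^2 = \sqrt{y(1+y)}$ reduces to $(1-4j_2)u^4 = 1$, which yields after simplification
\[
\rho_1^\sigma \;=\; \tfrac{1}{2}\bigl(1-\sqrt{1-4j_2}\bigr) \qquad \text{for both } \sigma = \pm 1
\]
(the case $\sigma=-1$ follows from $\sigma=+1$ by the shift $y = y'+1$). Since $K_2 \to \infty$ as $\varepsilon\to 0$ by Lemma \ref{lemma_good}, $j_2 \to r/4$ and $\rho_1^\sigma \to \rho_c(0)$.

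\textbf{Step 2 (iteration).} I would then write \eqref{better_recursion} as $\rho_{n+1}^\sigma \leq a_n \rho_n^\sigma + b_n$ with $a_n = (j_{n+2}/j_{n+1})(1-1/l_n) < 1$ (both factors are $<1$ since $j_{n+2} < j_{n+1}$) and $b_n = (j_{n+2}/j_{n+1})(1/l_n + \Delta_n)$. The telescoping $\prod_{k=1}^n a_k = (j_{n+2}/j_2)\prod_k(1-1/l_k)$ yields
\[
\rho_{n+1}^\sigma \;\leq\; \frac{j_{n+2}}{j_2}\,\rho_1^\sigma + j_{n+2}\sum_{k=1}^n \frac{1/l_k + \Delta_k}{j_{k+1}} \;\leq\; \rho_1^\sigma + \frac{4 j_2}{r}\,\Sigma(\varepsilon),
\]
where $\Sigma(\varepsilon) := \sum_{k\geq 1}(1/l_k + \Delta_k)$. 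The whole argument thus reduces to showing $\Sigma(\varepsilon) < \infty$ and $\Sigma(\varepsilon) \to 0$ as $\varepsilon \to 0$.

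\textbf{Step 3 (bounding $\Sigma(\varepsilon)$ — the main obstacle).} The delicate point is the control of $\sum_k \Delta_k$ via \eqref{def_Delta_n}. Since $j_{k+1} - j_{k+2} \asymp K_{k+1}^{-b/2}$ and $j_{k+1}, j_{k+2} \geq r/4$, one has $j_{k+2}^{-1} - j_{k+1}^{-1} \asymp (16 a/r^2)\, K_{k+1}^{-b/2}$, while $\delta_k^2 \asymp (\log K_{k+1})^3/K_k$. Plugging these into \eqref{def_Delta_n} and using $K_{k+1} = l_k K_k$ with $l_k \asymp K_k^\gamma$ yields
\[
\Delta_k \;=\; O\!\left(K_k^{-(1-b(1+\gamma)/2)}\,(\log K_k)^{c_0}\right)
\]
for some absolute constant $c_0$. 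The hypothesis $\gamma < 2/b - 1$ makes the exponent $1 - b(1+\gamma)/2$ strictly positive; combined with the super-exponential growth of $K_k$ (which also gives $\sum 1/l_k = \sum K_k^{-\gamma} < \infty$), both series are summable with tails uniformly controlled in $K_1$. Since $K_1(\varepsilon)\to\infty$ as $\varepsilon\to 0$ by Lemma \ref{lemma_good} and every $K_n$ is monotone in $K_1$, this forces $\Sigma(\varepsilon) \to 0$. Substituting back into Step 2 gives $\limsup_n \rho_n^\sigma \leq \rho_1^\sigma + (4j_2/r)\,\Sigma(\varepsilon)$, which is strictly less than $1/2$ for $\varepsilon$ small enough and tends to $\rho_c(0)$ as $\varepsilon\to 0$, establishing (iii) and completing the plan.
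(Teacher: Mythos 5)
Your proposal is correct and follows essentially the same route as the paper's proof in Section 6.2. Both reduce (iii) to a telescoped iteration of \eqref{better_recursion} plus the summability/decay of $\sum 1/l_k$ and $\sum \Delta_k$, using the super-exponential growth $K_n \approx K_1^{(1+\gamma)^{n-1}}$, the hypothesis $\gamma < 2/b - 1$ to make the exponent $1 - b(1+\gamma)/2$ strictly positive in the bound $\Delta_n = O(K_n^{-(1-b(1+\gamma)/2)}(\log K_n)^{6})$, and the divergence $K_1(\varepsilon)\to\infty$ together with dominated convergence to drive the sums to $0$; the only cosmetic difference is that the paper drops the ratios $j_{i+1}/j_i \leq 1$ before iterating and keeps $1-\prod a_i$ in telescoped form (Lemma \ref{lemma_induction}), while you carry the $j$-ratios through the product and bound $1-\prod a_i$ by $\sum 1/l_k$.
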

%
%
\textcolor{black}{
The following result established in Section \ref{sec:other_proofs} will
lead to Proposition \ref{prop_dilute_limit}.}
\begin{proposition}
\label{prop:passage_dilute}
{\color{black}
The dilute limit \eqref{dilute_g} holds as the sequence $g_n$ (defined in Proposition \ref{prop:induction_gen}) satisfies
\margincom{\color{black} Il faudrait un $y_1^\sigma$ ?\textcolor{black}{OK}}
\be
\label{dilute_g bis}
\textcolor{black}{\forall y \in [\sigma^- , \sigma^+y_1^1(0)-\sigma^-y_1^{-1}(0)[}, 
\qquad 
\limsup_{\varepsilon\to 0}\limsup_{n\to+\infty} g_n (\sigma,y) 
\leq(\sqrt{\sigma+y}+\sqrt{y})^2.
\ee
Recall that $g_n$ depends on $\varepsilon$ through the coarse graining scale.
}
\end{proposition}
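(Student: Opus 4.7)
My plan is to show that $\tau_\star(\sigma,y):=(\sqrt{\sigma+y}+\sqrt{y})^2$ is an approximate fixed point of the recursion \eqref{recursion_gen}, with errors that sum to a bounded quantity vanishing as $\varepsilon\to 0$. Denote by $T_n$ the operator defined by the right-hand side of \eqref{recursion_gen}, so that $g_{n+1}=T_n[g_n]$ with $g_1=\tau_\star$ by \eqref{level_1}. An induction on $n$ combined with this approximate stationarity will then yield \eqref{dilute_g bis}.

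The first step is to locate the supremum in $T_n[\tau_\star]$. The map $\bar y\mapsto\tau_\star(\sigma,\bar y)-\bar y/j_{n+1}$ is concave, with a unique interior maximizer $\bar y^\star_n$ solving $\partial_y\tau_\star(\sigma,\bar y^\star_n)=1/j_{n+1}$. Solving this equation explicitly and using $j_{n+1}\to r/4$ together with \eqref{def_limit_plateau} shows that $\bar y^\star_n\to y_1^\sigma(0)$ as $n\to\infty$. Hence for $y<y^\sigma_\ast:=\sigma^+y_1^1(0)-\sigma^-y_1^{-1}(0)$ and $n$ sufficiently large, $yl_n/(l_n-1)<\bar y^\star_n$, so the supremum in $T_n[\tau_\star]$ is attained at the right endpoint $\bar y=yl_n/(l_n-1)$. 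A first-order Taylor expansion, or equivalently the 1-homogeneity identity $(1-1/l_n)\tau_\star(\sigma,yl_n/(l_n-1))=\tau_\star(\sigma(1-1/l_n),y)$, then yields on any compact $K\subset[\sigma^-,y^\sigma_\ast)$ (with a neighborhood of $y=1$ excluded when $\sigma=-1$)
\[
T_n[\tau_\star](\sigma,y)-\tau_\star(\sigma,y)\ \le\ \frac{C_K}{l_n}+\delta_n\,\varphi(y),
\]
where $C_K$ controls $y\partial_y\tau_\star(\sigma,y)-\tau_\star(\sigma,y)+(1+\sigma)/(2j_{n+1})$ uniformly on $K$.

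Next I would iterate. Set $a_n:=\sup_{y\in K}[g_n(\sigma,y)-\tau_\star(\sigma,y)]_+$. Since adding a constant to $g_n$ translates $T_n[g_n]$ by $(1-1/l_n)$ times that constant, the previous step gives $a_{n+1}\le(1-1/l_n)\,a_n+C_K/l_n+\delta_n\|\varphi\|_K$. Starting from $a_1=0$, iterating yields $a_n\le\sum_{k\ge 1}(C_K/l_k+\delta_k\|\varphi\|_K)$. Because $l_n=\lfloor K_n^\gamma\rfloor$ and $K_{n+1}=l_nK_n$ grow hyper-exponentially in $n$, both series are dominated by their first term: $\sum_k 1/l_k=O(K_1^{-\gamma})$ and $\sum_k\delta_k=O((\log K_1)^{3/2}/\sqrt{K_1})$. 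By Lemma \ref{lemma_good}, $K_1(\varepsilon)\to\infty$ as $\varepsilon\to 0$, so both sums vanish in the dilute limit, proving \eqref{dilute_g bis}.

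Two points require care. First, the supremum at level $n+1$ ranges over $\bar y\in[\sigma^-,yl_n/(l_n-1)]$, slightly larger than the interval on which the inductive bound holds at level $n$; I handle this by iterating on a shrinking family of intervals $K_n=[\sigma^-,y^{(n)}]$ with $y^{(n+1)}=y^{(n)}(l_n-1)/l_n$, whose total shrinkage $\prod(1-1/l_n)$ is a positive constant tending to $1$ as $\varepsilon\to 0$, so that any target $y<y^\sigma_\ast$ lies in every $K_n$ provided $y^{(1)}$ is chosen close enough to $y^\sigma_\ast$. The main expected obstacle is the singularity of $\tau_\star(-1,\cdot)$ at $y=1$, where both the bound on $y\partial_y\tau_\star-\tau_\star$ and the test function $\varphi$ blow up; one restricts $K$ to $[1+\eta,Y]$ for $\sigma=-1$ and obtains from a half-integer Taylor expansion a weaker per-step error of order $1/\sqrt{l_n}$, which is still summable at the same rate and extends to $y=1$ by continuity thanks to the matching value $\tau_\star(-1,1)=1$.
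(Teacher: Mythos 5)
Your proof is correct and follows essentially the same route as the paper's, though packaged differently. The paper iterates \eqref{recursion_gen} directly backward from level $N$ to level $1$ along the chain $y_{n,N}=\prod_{k>n}\frac{l_k}{l_k-1}\,y$, obtaining
$g_{N+1}(\sigma,y)\leq\prod_{n=1}^N(1-l_n^{-1})\,g_1\bigl(\sigma,\prod_{k=1}^N\tfrac{l_k}{l_k-1}y\bigr)+\tfrac{4}{r}\sum l_n^{-1}+\sum\delta_n\varphi(y_{n,N})$,
and then sends $N\to\infty$, $\varepsilon\to 0$. This needs only continuity of $g_1$ plus the inequality $y_{n,N}\leq y_1\leq y_n$ from Lemma \ref{lemma_yn}, which ensures the ``linear'' branch \eqref{gn_2} applies at every step. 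You instead recast the same iteration as a contraction of $a_n:=\sup_K[g_n-\tau_\star]_+$, which forces you to estimate $T_n[\tau_\star]-\tau_\star$ per step via Euler's relation $\sigma\partial_\sigma\tau_\star+y\partial_y\tau_\star=\tau_\star$ and a Taylor expansion; that extra step is what exposes the singularity of $\partial_y\tau_\star(-1,\cdot)$ at $y=1$, which the paper's formulation bypasses entirely since it never differentiates $g_1$. Your half-integer expansion giving an $O(l_n^{-1/2})$ per-step error near $y=1$ is legitimate, just slightly more work. Your identification of where the sup sits (endpoint vs.\ interior maximizer $\bar y^\star_n$) plays the role of Lemma \ref{lemma_yn} in the paper, and the shrinking intervals $K_n$ mirror the chain $y_{n,N}$. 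Both arguments then conclude from the same two facts: the sums $\sum l_n^{-1}$, $\sum\delta_n$ (and in your version $\sum l_n^{-1/2}$) are dominated by their first terms because the scales $K_n$ grow super-geometrically, and these first terms vanish as $\varepsilon\to 0$ because $K_1(\varepsilon)\to\infty$ (Lemma \ref{lemma_good}).
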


\section{Proof of Proposition \ref{prop:induction_gen}}
\label{subsec:outline}

In this section, we prove the recursion in Proposition \ref{prop:induction_gen}.
To this end, we decompose a path of length $K_{n+1}$ according to its traces on the interfaces between the subblocks of size $K_n$ (see Figure \ref{fig: coarse grained}).
The set of such traces will hereafter be called the ``skeleton'' of the path.
The idea is to use  \eqref{nonparbound} as an induction hypothesis for the subpaths in each block of size $K_n$. If we neglect the fluctuations of these subpaths,
the ```mean'' computation reduces to  optimizing  the positions of the traces so as to maximize the total passage times of subpaths of  level $n$. 
%
This ``mean'' induction relation is altered by an error term (see Proposition \ref{prop:fluct} below) arising from fluctuations of the subpaths as well as the entropy induced by the many possible skeletons. 
%

\subsection{Skeleton decomposition}

\begin{figure}[h]
  \centering
\includegraphics[width=7cm, angle=270]{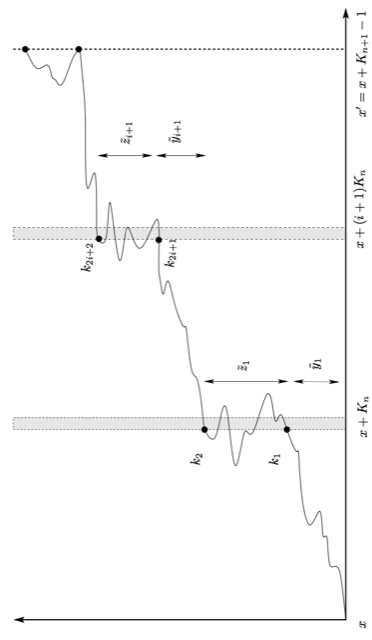}
\caption{\small A block of level $n+1$ is partitioned into blocks of length $K_n$ (only 3 blocks are depicted). The grey regions represent the boundaries between the blocks at  level $n$ which are separated by a microscopic length 1.
A coarse grained path is depicted and the black dots denote the renewal points $k_i$.} 
  \label{fig: coarse grained} 
\end{figure}

We consider $B=[x,x'=x+K_{n+1}-1]\cap\Z$ a block of order $n+1$, where $x\in\Z$. 
Let $\gamma= ( (x_k,y_k) )_{k=0,\ldots,m-1}$
be a path restricted to $B$ connecting $(x,0)=(x_0,y_0=0)$ to $(x',y'=[K_{n+1}y])=(x_{m-1},y_{m-1})$.
We define the {\it skeleton} $s(\gamma)=\tilde{\gamma}$ of $\gamma$ as follows
(see figure \ref{fig: coarse grained}). 
Let $k_0=-1$ and $y_{-1}=0$. For $i\in\textcolor{black}{\{0,\ldots,l_n-1\}}$, we set
\begin{eqnarray*}
k_{2i+1} & : = & \min\{k>k_{2i}:\,x_k=x+(i+1)K_n-1\},\\
k_{2i+2} & : = & \max\{k\geq k_{2i+1}:\,x_k=x+(i+1)K_n-1\}.
\end{eqnarray*}
\textcolor{black}{
Note that $k_{2i+1}$ and $k_{2i+2}$ are finite because our path connects $(x,0)$ to $(x',y')$ with horizontal increments $\pm 1$.
}
Because $x_{k+1}-x_k\leq 1$, we necessarily have $x_{1+k_{2i+2}}=x+(i+1)K_n$ and $y_{1+k_{2i+2}}=y_{k_{2i+2}}$. Note that $x_{k_{2l_n-1}}=x_{k_{2l_n}}=x'$ and $y_{k_{2l_n}}=y'$.
Recall that the block $B$ is made of $l_n = \lfloor K_n^\gamma \rfloor$ boxes of length $K_n$.
The skeleton $s(\gamma)$ of $\gamma$ is then the sequence $\tilde{\gamma}=(\tilde{y}_i,\tilde{z}_i)_{i=1,\ldots,l_n}\in(\N^2)^{l_n}$ given by
\begin{eqnarray}
\tilde{y}_i & := & y_{k_{2i-1}}-y_{k_{2i-2}}\label{skeleton_y},\\
\tilde{z}_i & : = & y_{k_{2i}}-y_{k_{2i-1}}\label{skeleton_z}.
\end{eqnarray}
By definition, we have 
\be\label{total_height}
\sum_{i=1}^{l_n}\left(\tilde{y}_i+\tilde{z}_i\right)=y'=  \lfloor K_{n+1} y\rfloor.
\ee
In a similar way for the paths going from right to left, if $B=[x'=x-K_{n+1}+1,x]\cap\Z$, we may define the skeleton of a path connecting $(x,0)=(x_0,y_0=0)$ to $(x',y'=[K_{n+1}y])=(x_{m-1},y_{m-1})$. 
Let $k_0=-1$ and $y_{-1}=-1$. For $i\in\N$, let
\begin{eqnarray*}
k_{2i+1} & : = & \min\{k>k_{2i}:\,x_k=x-(i+1)K_n+1\} , \\
k_{2i+2} & : = & \max\{k\geq k_{2i+1}:\,x_k=x-(i+1)K_n+1\}.
\end{eqnarray*}
Because $x_{k+1}-x_k\geq -1$, we necessarily have $x_{1+k_{2i+2}}=x-(i+1)K_n$ and $y_{1+k_{2i+2}}=1+y_{k_{2i+2}}$. Note that $x_{k_{2l_n-1}}=x_{k_{2l_n}}=x'$ and $y_{k_{2l_n}}=y'$.
The skeleton $s(\gamma)$ of $\gamma$ is then the sequence $\tilde{\gamma}=(\tilde{y}_i,\tilde{z}_i)_{i=1,\ldots,l_n}$ given by
\eqref{skeleton_y}--\eqref{skeleton_z}. Since allowed path increments are $(1,0)$ and $(-1,1)$, this sequence must now satisfy the constraint
$\tilde{y}_i\geq K_n$ for $i\geq 1$. 
\\ \\
Let $\tilde{\Gamma}_n((x,0),(x',y'))$ denote the set of skeletons of all paths $\gamma$ restricted to $B$ connecting $(x,0)$ and $(x',y')$, that is
the set of sequences $\tilde{\gamma} = (\tilde{y}_i,\tilde{z}_i)_{i=1,\ldots,l_n}\in(\N^2)^{\{1,\ldots,l_n \} }$ satisfying \eqref{total_height}, with the
constraint $\tilde{y}_i\geq K_n$ in the case $x'<x$. We will simply write $\tilde{\Gamma}_n$ when the endpoints are obvious from the context.
\subsection{\textcolor{black}{P}assage time decomposition}
Let $\sigma = \pm 1$ denote as in \eqref{eq: path direction} the direction of the paths.
To encompass both cases $\sigma=\pm 1$, we will use the following simplifying  convention: an interval can be written $[a,b]$ even if $a>b$, in which case it actually means $[b,a]$.
From now on, for notational simplicity, we consider the block $B = [ 0, \sigma(K_{n+1}-1)]$, instead of a block with arbitrary position $x\in\Z$.
For $l\in\{1,\ldots,l_n\}$, we denote by
\textcolor{black}{$B_l:=[\sigma(l-1)K_n,\sigma(lK_n-1)]\cap\Z$} the $l$-th subblock of level $n$ in the decomposition of $B$.
For a path skeleton $\tilde{\gamma}=(\tilde{y}_l,\tilde{z}_l)_{l=1,\ldots,l_n}\in\tilde{\Gamma}_n$, define 
$$
\tilde{h}_i:=\sum_{j=1}^{i-1}\left[\tilde{y}_j+\tilde{z}_j \right]
$$
if $i\geq 2$ and $\tilde{h}_1=0$. The quantity $\tilde{h}_i$ represents the height at which a path with skeleton $\tilde{\gamma}$ enters block $i$ without ever returning to block $i-1$.
\begin{figure}[htpb]
\includegraphics[width=7cm, angle = 270]{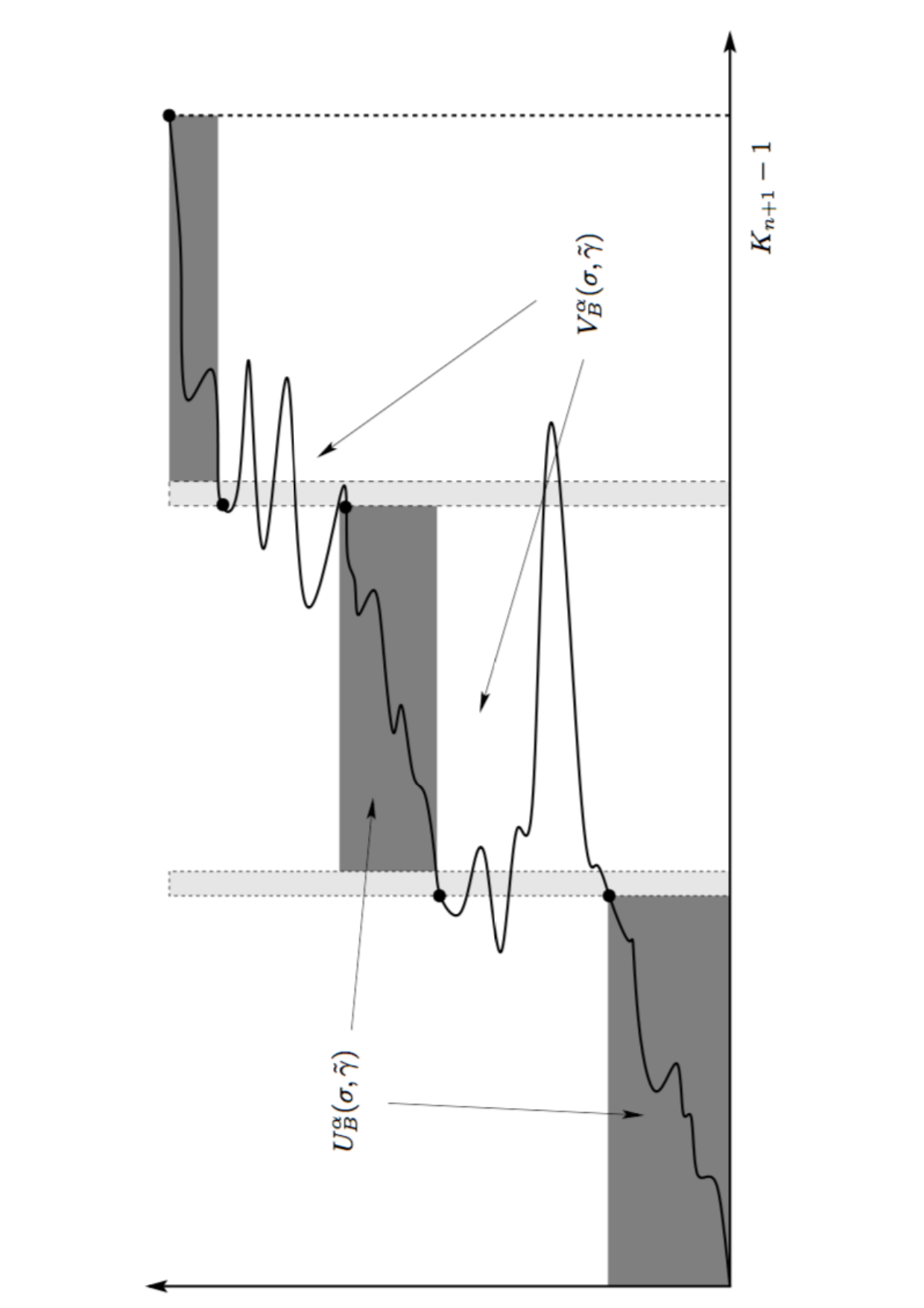}
\caption{\small A coarse grained path is depicted in a block of order $n+1$. 
The horizontal crossings through each block $B_l$ are restricted to the dark grey regions. The passage time $U^\alpha_B(\sigma,\tilde \gamma)$ depends only on the variables $\{ Y_{i,j} \}$ inside the grey regions which are disjoint from the regions used by the vertical paths contributing to  
$V^\alpha_B(\sigma,\tilde \gamma)$.} 
  \label{fig: independence} 
\end{figure}
%
%
For a path $\gamma\in\Gamma_B((0,0),(\sigma(K_{n+1}-1),y'))$  with skeleton $\tilde{\gamma}$, we have that 
\be
T^\alpha_B(\gamma)\leq U^\alpha_{B}(\sigma, \tilde{\gamma})+V^\alpha_{B}(\sigma, \tilde{\gamma}) \leq T^\alpha_B((0,0),( \sigma (K_{n+1}-1),y' ) \big),
\label{decomp_block}
\ee
where
$$
U^\alpha_{B}(\sigma,\tilde{\gamma})  := 
\sum_{l=1}^{l_n} U^\alpha_{B,l}(\sigma,\tilde{\gamma}),\quad
%
%
V^\alpha_{B}(\sigma,\tilde{\gamma}) :=  
\sum_{l=1}^{l_n} V^\alpha_{B,l}(\sigma,\tilde{\gamma}) ,
%
$$
with 
\begin{eqnarray}
U^\alpha_{B,l}(\sigma,\tilde{\gamma}) & := &
T^\alpha_{B_l} \big( \sigma (l-1)K_n,\tilde{h}_l),(\sigma (lK_n-2),\tilde{h}_l+\tilde{y}_l+\sigma-1) \big),
\nonumber \\
V^\alpha_{B,l}(\sigma,\tilde{\gamma})  &  := &  T^\alpha_{B} \big( (\sigma(lK_n-1),\tilde{h}_l+\tilde{y}_l+\frac{\sigma-1}{2}),(\sigma (lK_n-1),\tilde{h}_l+\tilde{y}_l+\frac{\sigma-1}{2}+\tilde{z}_l) \big),
\nonumber\\
& & \label{eq: def U V}
\end{eqnarray}
where $U^\alpha_{B}(\tilde{\gamma})$ is the contribution of the horizontal crossings in the 
blocks $B_l$ and  $V^\alpha_{B}(\tilde{\gamma})$ the contribution of the vertical paths at the 
junction of the blocks $B_l$ (see \textcolor{black}{F}igure \ref{fig: independence}). 
Noticing that the second inequality in \eqref{decomp_block} is an equality if and only if $\tilde{\gamma}$ is the skeleton
of the optimal path, we get
\be
%
T^\alpha_B((0,0),(\sigma(K_{n+1}-1),y'))  =  \max_{
\tilde{\gamma}\in\tilde{\Gamma}_n((0,0),(\sigma(K_{n+1}-1),y'))
}\left\{
U^\alpha_{B}(\sigma,\tilde{\gamma})+V^\alpha_{B}(\sigma,\tilde{\gamma})
\right\}.
\label{double_max}
\ee
To derive Proposition  \ref{prop:induction_gen}, we have to estimate 
\begin{equation}
\label{eq: objectif}
\tau^\alpha_{n+1,B}(\sigma,y)
:= \frac{1}{K_{n+1}} \Exp \Big( T^\alpha_B((0,0),(\sigma(K_{n+1}-1),y')) \Big)
\end{equation}
with  $y' = \lfloor K_{n+1} y\rfloor$ and  we decompose this expectation into the sum of two components:
\be\label{mean_opti}
\max_{
\tilde{\gamma}\in\tilde{\Gamma}_n((0,0),(\sigma(K_{n+1}-1),y'))
}
\left\{
\Exp \left( \frac{U^\alpha_{B}(\sigma,\tilde{\gamma})}{K_{n+1}} \right)
+\Exp \left( \frac{V^\alpha_{B}(\sigma,\tilde{\gamma}) }{K_{n+1}} \right)
\right\},
\ee
that is the ``mean optimization problem" and a  ``fluctuation part'' defined for  
$y' = \lfloor K_{n+1} y\rfloor$ as
\begin{eqnarray}
& {\mathcal F}_n (y) =  \Exp \left( \frac{1}{K_{n+1}}\max_{
\tilde{\gamma}\in\tilde{\Gamma}_n((0,0),(\sigma(K_{n+1}-1),y'))
}\Big\{
U^\alpha_{B}(\sigma,\tilde{\gamma})+V^\alpha_{B}(\sigma,\tilde{\gamma})
\Big\}  \right) 
\nonumber\\
& \qquad \qquad \qquad \qquad 
-  \max_{ \tilde{\gamma}\in\tilde{\Gamma}_n((0,0),(\sigma(K_{n+1}-1),y'))
}
\left\{
\Exp \left( \frac{U^\alpha_{B}(\sigma,\tilde{\gamma})}{K_{n+1}} \right)
+\Exp \left( \frac{V^\alpha_{B}(\sigma,\tilde{\gamma}) }{K_{n+1}} \right)
\right\}.
\label{fluct_part}
\end{eqnarray}

The term \eqref{mean_opti}, which involves known information from subblocks, will give the main recursion structure, 
while  \eqref{fluct_part} will be an error term. The latter will be controlled by fluctuations and entropy of paths.
The precise result that will be established in Section \ref{sec: prop passage time} is the following:%
\begin{proposition}
\label{prop:fluct}
With the notation \eqref{fluct_part}, one has uniformly in $y$
\begin{eqnarray}
{\mathcal F}_n (y)
\leq
\delta_n\sqrt{\frac{\sigma}{2}+y} \,\big( 1 +\log(1+y) \big)^{3/2},
\label{eq:fluct}
\end{eqnarray}
with $\delta_n$ defined in \eqref{eq: delta n}.
%
%
\end{proposition}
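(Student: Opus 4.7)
The plan is to exploit the independence structure depicted in Figure \ref{fig: independence} together with a concentration-plus-union bound over skeletons. For a fixed skeleton $\tilde{\gamma}$, write $X_{\tilde{\gamma}} := U^\alpha_B(\sigma,\tilde{\gamma}) + V^\alpha_B(\sigma,\tilde{\gamma})$, so that by definition
$$
\mathcal F_n(y) = \frac{1}{K_{n+1}}\Big( \Exp\big[\max_{\tilde{\gamma}} X_{\tilde{\gamma}}\big] - \max_{\tilde{\gamma}} \Exp[X_{\tilde{\gamma}}] \Big)
\leq \frac{1}{K_{n+1}} \Exp\Big[ \max_{\tilde{\gamma}} \big( X_{\tilde{\gamma}} - \Exp X_{\tilde{\gamma}} \big) \Big].
$$
For each $\tilde{\gamma}$, the horizontal pieces $\{U^\alpha_{B,l}\}_{l=1,\ldots,l_n}$ and the vertical pieces $\{V^\alpha_{B,l}\}_{l=1,\ldots,l_n}$ depend on disjoint subsets of the exponential service times $Y_{i,j}$, so conditionally on the disorder $\alpha_B$, the variable $X_{\tilde{\gamma}}$ is a sum of $2l_n$ independent random variables.

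The next step is a sub-exponential concentration estimate for a fixed skeleton. Each $U^\alpha_{B,l}$ is a wedge-LPP passage time through a rectangle whose total linear size is $K_n + \tilde{y}_l$, with $\mathrm{Exp}(1)$ weights and disorder taking values in $[r,1]$. Using the log-Sobolev inequality for the exponential distribution together with Herbst's argument (or a bounded-difference / Efron--Stein bound after sandwiching by homogeneous-rate LPP), one obtains
$$
\Prob\big( |U^\alpha_{B,l} - \Exp U^\alpha_{B,l}| > t \big) \leq C \exp\Big( -c \min\Big( \frac{t^2}{K_n + \tilde{y}_l},\, t \Big) \Big),
$$
with constants depending only on $r$. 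The vertical term $V^\alpha_{B,l}$ is a sum of $\tilde{z}_l$ rescaled i.i.d.\ exponentials and satisfies the analogous inequality with $\tilde{z}_l$ in place of $K_n + \tilde{y}_l$. Combining these via independence, and using $\sum_l (K_n + \tilde{y}_l + \tilde{z}_l) \leq 2 K_{n+1}(\sigma/2 + y)$, one deduces
$$
\Prob\big( |X_{\tilde{\gamma}} - \Exp X_{\tilde{\gamma}}| > t \big) \leq C \exp\Big( -c \min\Big( \frac{t^2}{K_{n+1}(\sigma/2 + y)},\, t \Big) \Big).
$$

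The third ingredient is the entropy of skeletons. Since $\tilde{\Gamma}_n$ is in bijection with the nonnegative integer solutions of $\sum_l(\tilde{y}_l + \tilde{z}_l) = y' = \lfloor K_{n+1} y\rfloor$, we get the elementary bound $|\tilde{\Gamma}_n| \leq (y' + 1)^{2 l_n}$, hence
$$
\log |\tilde{\Gamma}_n| \leq 2 l_n \log\big( K_{n+1}(1+y) \big).
$$
A union bound then yields
$$
\Prob\Big( \max_{\tilde{\gamma}} (X_{\tilde{\gamma}} - \Exp X_{\tilde{\gamma}}) > t \Big) \leq |\tilde{\Gamma}_n| \cdot C \exp\Big( -c \min\Big( \frac{t^2}{K_{n+1}(\sigma/2 + y)},\, t \Big) \Big).
$$
Integrating this tail and dividing by $K_{n+1} = l_n K_n$, one obtains an upper bound of the form $C\sqrt{(\sigma/2+y)/K_n}$ times polylogarithmic factors in $K_{n+1}$ and $1+y$. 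A dyadic chaining decomposition over the scales of $y'$ (alternatively, a careful interpolation between the Gaussian and exponential tail regimes near the threshold $t \asymp K_{n+1}(\sigma/2+y)$) then absorbs the combinatorial entropy and yields the exponents $3/2$ on both $\log K_{n+1}$ and $\log(1+y)$, giving exactly $\mathcal F_n(y) \leq \delta_n \varphi(y)$.

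The main obstacle is establishing the concentration inequality in the second step uniformly in the skeleton, that is, with a constant $c$ that does not depend on the aspect ratio $\tilde{y}_l/K_n$ nor on the particular (good) disorder realization. I would address this by sandwiching $U^\alpha_{B,l}$ between two homogeneous LPP times with rates $r$ and $1$ using the bounds $r \leq \alpha(x) \leq 1$ on good blocks, and then invoking Kesten's classical concentration for homogeneous exponential LPP. The secondary technical point is obtaining the power $(\log)^{3/2}$ rather than the naive $(\log)^{1/2}$ produced by a purely Gaussian union bound or the $\log$ from a purely sub-exponential one; matching the announced bound requires carefully balancing the two regimes through the chaining argument, which is the most delicate step of the computation.
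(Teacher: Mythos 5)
Your overall plan coincides with the paper's: decompose into horizontal and vertical pieces (independent given the disorder), get a concentration inequality for each piece, sum the variance proxies, do a union bound over skeletons, and count skeletons. Where you diverge is at the concentration step, and this is where your diagnosis of the difficulty goes wrong.

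The paper does \emph{not} apply a concentration inequality directly to the exponential weights. It truncates them (Step 1 of Section \ref{fluct_ent}, conditioning on the maximum $M_B(y)$), applies the simple Azuma-type inequality of Lemma \ref{lemma_concentration} (which only works for bounded weights), and then \emph{removes} the truncation (Step 3) by integrating over the law of $M_B(y)$. This last step produces a multiplicative factor $\Exp[M_B(y)] \le m\bigl(\lfloor y K_{n+1}\rfloor K_{n+1}\bigr) = O\bigl(\log(K_{n+1})+\log(1+y)\bigr)$. The exponent $3/2$ in the statement is exactly the product of the $(\log)^{1/2}$ from $\sqrt{\log|\tilde\Gamma_n|}$ in Lemma \ref{cor_max} and the extra $(\log)^{1}$ from $\Exp[M_B(y)]$. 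It is a cost paid for the cutoff, chosen deliberately because it ``extends to service distributions with heavier tails'' -- the paper says so explicitly after Lemma \ref{lemma_concentration}.

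Your route avoids the cutoff: you want a Bernstein-type concentration for last-passage times with \emph{unbounded} exponential weights, $\Prob(|T-\Exp T|>t)\le C\exp(-c\min(t^2/L,\,t))$. That inequality is correct and follows from the Bobkov--Ledoux modified log-Sobolev inequality for the exponential measure (variance proxy $\|\nabla T\|_2^2\le L/r^2$, Lipschitz scale $\|\nabla T\|_\infty\le 1/r$); it is, however, a less elementary tool than the Azuma/Martin bound the paper uses, and you should be aware that citing ``log-Sobolev for the exponential'' or ``Efron--Stein'' without specifying the modified (Poincaré-type) version is not enough, since the exponential distribution does not satisfy the ordinary LSI. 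Once you have this Bernstein inequality, the straightforward union bound plus integration gives
$$
\mathcal F_n(y)\;\lesssim\;\sqrt{\frac{\sigma/2+y}{K_n}}\;\sqrt{\,\log K_{n+1}+\log(1+y)\,},
$$
because you are in the sub-Gaussian regime ($\log|\tilde\Gamma_n|\lesssim l_n\log(K_n(1+y))\ll K_{n+1}(\sigma/2+y)$). That is $(\log)^{1/2}$, which is \emph{smaller} than the announced $\delta_n\varphi(y)\asymp K_n^{-1/2}(\log K_{n+1})^{3/2}\sqrt{\sigma/2+y}\,(1+\log(1+y))^{3/2}$.

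Here is the genuine gap in your write-up: you misread the statement as requiring you to \emph{reach} $(\log)^{3/2}$, and you invoke a ``dyadic chaining decomposition'' to boost $(\log)^{1/2}$ up to $(\log)^{3/2}$. That is backwards. The proposition is an upper bound; obtaining $(\log)^{1/2}$ proves it immediately. Chaining is never needed, and the paragraph where you describe it as ``the most delicate step of the computation'' should simply be replaced by the observation that the naive union-bound computation already produces a quantity dominated by $\delta_n\varphi(y)$. As written, that paragraph is a hand-wave toward an argument that you would not be able to carry out (one cannot make a bound \emph{worse} by chaining) and that the proof does not require.
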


We will in fact replace the upper bound in \eqref{eq:fluct} 
by a slightly worse one
for the sole purpose of making it a concave function of $y$, which is important for us. We therefore observe that
\be
\label{concave_bound}
\mathcal F_n(y)\leq\mathcal G_n(y):=\delta_n\varphi(y),
\ee
where $\varphi$ is the function defined by \eqref{eq: phi}.
{\color{black} The concavity of $\varphi$ can be checked by a straightforward, but tedious computation.}

\subsection{The {\bf main recursion} \eqref{recursion_gen}}
Using the skeleton decomposition, we are now going to derive Proposition \ref{prop:induction_gen}.
Let us explain the choice \eqref{level_1} of $g_1$ in Proposition \ref{prop:induction_gen}.
To initiate the induction relation, we need a bound at level $1$ for $\rho_1$ and $J_1$. 
For $n=1$, a good block {\color{black} at level 1} contains only rates $\alpha(x) =1$.
Since the restricted passage times are smaller than the unrestricted ones, 
and the latter are superadditive, the asymptotic shape \eqref{def_limheight_k}--\eqref{eq: homogeneous LPP} of the homogeneous last passage percolation 
yields the exact upper bound
$$
\tau^\alpha_{1,B}(\sigma,y)\leq\left(\sqrt{\sigma+y}+\sqrt{y}\right)^2 =:g_1(\sigma,y).
$$
By  definition, $g_1$ is concave. 
Note that, if $g_n(\sigma,.)$ is   concave, then $g_{n+1}(\sigma,.)$ defined by \eqref{recursion_gen} inherits this property.

\bigskip

Suppose now that the inequality \eqref{nonparbound} 
$$
\tau^\alpha_{n,B}(\sigma,y)\leq g_n(\sigma,y)
$$  
holds at step $n$ and that $g_n$ is concave. We will show that the recursion is valid at step $n+1$ with $g_{n+1}$ defined as in \eqref{recursion_gen}.

We first focus on the mean optimization problem \eqref{mean_opti} and consider a good block $B = [0
, \sigma (K_{n+1} - 1) ]$
at level $n+1$.
For a fixed disorder $\alpha$, by \textcolor{black}{\eqref{def_mintime} and \eqref{max_current_lpp}},
%
\be
\label{vertical_bound}
\Exp\left[V^\alpha_{B,l}(\sigma,\tilde{\gamma})\right]
\leq 
\frac{1}{j_{\infty,B}} \; \tilde{z}_l.
\ee
Since $B$ is a good block, $j_ {\infty,B}$ satisfies \eqref{condition_good}. 
Thus 
\be
\label{eq: def j n+1}
j_{\infty,B}\geq\frac{r}{4}+\frac{a}{K_{n+1}^{b/2}}=:j_{n+1},
\ee
(recall that $j_{n+1}$ was introduced in \eqref{condition_good} as one of the conditions defining a good block).
%
%
As $B$ is a good block, the subblocks $B_l$ are good for all values of $l=1,\ldots,l_n$ except for 
possibly one bad subblock with index $i_0$.
The recurrence hypothesis \eqref{nonparbound} at level $n$
implies that the mean passage time on a  good subblock $B_l$ is bounded by 
\be
\label{induction}
\Exp\left[U^\alpha_{B,l}(\sigma,\tilde{\gamma})\right]= K_n\tau^\alpha_{n,B_l}\left(
\sigma,\frac{\tilde{y}_l}{K_n} \right)
\leq 
K_n g_n\left(
\sigma,\frac{\tilde{y}_l}{K_n} \right).
%
%
%
%
\ee
For the possibly remaining value $i_0$ such that $B_{i_0}$ is a bad block, we use  a crude upper bound 
by artificially extending the path in order to compare its cost to the one of a vertical connection:
$$
U^\alpha_{B,i_0}(\sigma,\tilde{\gamma})\leq 
T^\alpha_{B_{i_0}}\left(\left(\sigma(i_0-1)K_n, \tilde{h}_{i_0} \right),\left(\sigma(i_0-1)K_n,
\tilde{h}_{i_0} +\tilde{y}_{i_0}+\frac{1+\sigma}{2}K_n\right)\right),
$$
%
which yields, as in \eqref{vertical_bound},
\be\label{crude_vertical_bound}
\Exp\left[U^\alpha_{B,i_0 }(\sigma,\tilde{\gamma})\right]
\leq \frac{1}{j_{\infty,B_{i_0}}} \; \left(\tilde{y}_{i_0}+\frac{1+\sigma}{2}K_n\right)
\leq \frac{1}{j_{n+1}} \left(\tilde{y}_{i_0} +\frac{1+\sigma}{2}K_n\right).
\ee
{\color{black} 
\textcolor{black}{
In \eqref{crude_vertical_bound} we used the inequality $j_{\infty,B_{i_0}}\geq j_{\infty,B}$, which follows from $B_{i_0}\subset B$, Lemma \ref{lemma_reform} and definition \eqref{restricted_time}. We combined this inequality
with the bound \eqref{eq: def j n+1} for $j_{\infty,B}$.
}
Note that  if there is no bad subblock, we will still apply 
\eqref{crude_vertical_bound} to an arbitrarily chosen subblock to avoid distinguishing this seemingly better case,
which ultimately would not improve our result.}
Combining the above expectation bounds, we obtain
\be
\label{mean_time_bound}
\Exp\left[
U^\alpha_{B}(\sigma,\tilde{\gamma})
+V^\alpha_{B}(\sigma,\tilde{\gamma})
\right]\leq K_{n+1}  \; g_{n+1}^{(1)}(\sigma,y,\tilde{\gamma}),
\ee
where 
\begin{equation}
\label{eq: onethird}
g_{n+1}^{(1)}(\sigma,y,\tilde{\gamma}):=
\frac{1}{l_n}\left\{
\sum_{l=1,\,l\neq i_0}^{l_n}g_n(\sigma,\bar{y}_l)+\frac{1}{j_{n+1}}\left[
\frac{1+\sigma}{2}+\bar{y}_{i_0}+\sum_{l=1}^{l_n} \bar{z}_l
\right]
\right\},
\end{equation}
where $(\bar{y}_l,\bar{z}_l)_{l=1,\ldots,l_n}\in[0,+\infty)^{2l_n}$ is the rescaled skeleton defined by
$\bar{y}_l=K_n^{-1}\tilde{y}_l$ and $\bar{z}_l=K_n^{-1}\tilde{z}_l$, which satisfies the constraint \eqref{total_height}, whence
\be\label{continuum_skeleton}
\sum_{l=1}^{l_n}\left(
\bar{y}_l+\bar{z}_l
\right)
\leq l_ny
\quad \text{with} \quad 
\bar{y}_l\geq\sigma^- .
\ee
Define
\begin{equation}
\label{eq: bar y}
\sigma^-\leq\, \bar{y}:=\frac{1}{l_n-1} \sum_{l=1,\ldots,l_n:\,l\neq i_0}\bar{y}_l
\leq  \frac{l_n}{l_n-1} y,
\end{equation}
so that from \eqref{continuum_skeleton}, we have
\be
\label{total_height_macro}
\bar{y}_{i_0}+\sum_{l=1}^{l_n}\bar{z}_l
\leq l_n y-(l_n-1)\bar{y} .
\ee
By concavity of $g_n$, \eqref{mean_time_bound}--\eqref{eq: onethird} and \eqref{total_height_macro}, we obtain an upper bound for \eqref{mean_opti}:
\begin{align}
\label{mean_time_bound_gen}
& \max_{
\tilde{\gamma}\in\tilde{\Gamma}_n((0,0),(\sigma(K_{n+1}-1),[K_{n+1}y]))
}
\left\{
\Exp \left( \frac{U^\alpha_{B}(\sigma,\tilde{\gamma})}{K_{n+1}} \right)
+\Exp \left( \frac{V^\alpha_{B}(\sigma,\tilde{\gamma}) }{K_{n+1}} \right)
\right\}\\
& \qquad \qquad\qquad \leq  \sup_{\sigma^- \leq \bar{y}\leq \frac{l_n}{l_n -1} y}
\left\{ 
\left(1-\frac{1}{l_n}\right)
\left[
g_n(\sigma,\bar{y}) - \frac{\bar{y}}{j_{n+1}} 
\right] \right\}
+ \frac{y}{j_{n+1}} + \frac{1+\sigma}{2l_nj_{n+1}} ,
\nonumber
\end{align}
where the value of $\bar y$ in \eqref{eq: bar y} has been replaced by a supremum.
To bound from above 
$\tau^\alpha_{n+1,B}(\sigma,y)$ (see  \eqref{eq: objectif}), it is enough to combine 
\eqref{mean_time_bound_gen} and Proposition \ref{prop:fluct}.
This completes the proof of Proposition \ref{prop:induction_gen}.

\section{Consequences of the main recursion}
\label{sec:other_proofs}
In this section, we prove Propositions \ref{prop:estimation_gen}, \ref{prop: estimation suites} 
and  \ref{prop:passage_dilute}.

\subsection{Proof of Proposition \ref{prop:estimation_gen}}\label{subsec:conseq_1}
%
As $g_1(\sigma,.)$ is concave, 
the recursion \eqref{recursion_gen} implies that $g_{n}(\sigma,.)$ is a concave function for all $n$. 
For notational simplicity, we shall write details of the proof for $\sigma=1$.  In this case, we simply write $g_n(.)$ for $g_n(\sigma,.)$ and $\rho_n$ for $\rho_n^\sigma$.
We will only briefly indicate what changes are involved for $\sigma=-1$.
We consider the sequence $(g_n)_{n \geq 1}$ given by the  recursion \eqref{recursion_gen} and set
\be
\label{def_yn}
y_n:=\inf\left\{
y\geq 0:\quad 
g'_n(y) \leq  \frac{1}{j_{n+1}} 
\right\},
\ee
where $g'_n$ stands for the right derivative of the concave function. Thus, if $y\geq(1-l_n^{-1})y_n$
\begin{equation}
 g_{n+1}(y) =  \left(
1-\frac{1}{l_n}
\right)\left[
g_n\left(y_n\right)- \frac{y_n}{j_{n+1}}
\right]
 +  \frac{y}{j_{n+1}} + \frac{1}{l_nj_{n+1}} +\delta_n \varphi(y),
\label{gn_1} 
\end{equation}
and if $y\leq(1-l_n^{-1})y_n$
\begin{equation}
 g_{n+1}(y) =  \left(
1-\frac{1}{l_n}
\right)\left[
g_n \left( \frac{l_n}{l_n-1} y \right)-\frac{l_n}{l_n-1} \frac{y}{j_{n+1}}
\right] +
 \frac{y}{j_{n+1}} + \frac{1}{l_nj_{n+1}} +\delta_n \varphi(y).
\label{gn_2}
\end{equation}
\begin{lemma}
\label{lemma_yn}
Assume $\hh$ with $b\in[1,2)$. Then for $\varepsilon$ small enough, the sequence $(y_n)_{n\geq 1}$ satisfies
\be
\label{value_yn}
\forall n \geq 2, \qquad 
 y_{n-1} \leq y_n  < \infty
\quad \text{and} \quad
{\varphi'(y_n)}=\frac{
j_{n+1}^{-1}-j_{n}^{-1}
}{\delta_{n-1}},
\ee
with $\varphi$ as in \eqref{eq: phi}.
\end{lemma}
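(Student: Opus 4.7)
The plan is to proceed by induction on $n\geq 2$, reading off $g_n'$ from the explicit form of the recursion. We only write the argument for $\sigma=1$; the case $\sigma=-1$ is analogous. By induction on \eqref{recursion_gen} starting from the concave $g_1$, each $g_n$ is concave, so $y_n$ in \eqref{def_yn} is well defined and $g_n'$ denotes its (nonincreasing) right derivative. Differentiating \eqref{gn_1}--\eqref{gn_2} in $y$ yields
\begin{align*}
g_n'(y) &= \tfrac{1}{j_n} + \delta_{n-1}\varphi'(y), \qquad y \geq (1-1/l_{n-1})y_{n-1},\\
g_n'(y) &= g_{n-1}'\!\bigl(\tfrac{l_{n-1}}{l_{n-1}-1}y\bigr) + \delta_{n-1}\varphi'(y), \qquad y< (1-1/l_{n-1})y_{n-1},
\end{align*}
the two expressions agreeing at $y=(1-1/l_{n-1})y_{n-1}$ since $g_{n-1}'(y_{n-1})=1/j_n$ by definition of $y_{n-1}$.

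A direct computation from \eqref{eq: phi} shows that $\varphi'(y)>0$ is continuous and tends to $0$ as $y\to\infty$. In the Case~1 regime above, $g_n'(y)\downarrow 1/j_n$ as $y\to\infty$. Since $(j_n)_n$ is strictly decreasing by \eqref{eq: delta n}, $1/j_{n+1}>1/j_n$, so the equation $g_n'(y)=1/j_{n+1}$ admits a unique solution in Case~1, provided $g_n'((1-1/l_{n-1})y_{n-1})>1/j_{n+1}$. Once this last inequality is ensured (inductively), $y_n$ lies in the Case~1 regime, is finite, and $g_n'(y_n)=1/j_{n+1}$ rearranges to
$$\varphi'(y_n)=\frac{j_{n+1}^{-1}-j_n^{-1}}{\delta_{n-1}},$$
the identity claimed in \eqref{value_yn}.

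The monotonicity $y_n\geq y_{n-1}$ then couples with the Case~1 location. Since $\varphi'$ is decreasing on $[y_{n-1},\infty)$, it is equivalent to $\varphi'(y_n)\leq\varphi'(y_{n-1})$, i.e.
$$\frac{j_{n+1}^{-1}-j_n^{-1}}{\delta_{n-1}}\ \leq\ \frac{j_n^{-1}-j_{n-1}^{-1}}{\delta_{n-2}}.$$
With $j_n=r/4+aK_n^{-b/2}$ one gets $j_{n+1}^{-1}-j_n^{-1}\asymp K_n^{-b/2}$; combined with $\delta_n\asymp(\log K_{n+1})^{3/2}/\sqrt{K_n}$ and $K_n\asymp K_{n-1}^{1+\gamma}$ from \eqref{eq: parameters}, this inequality reduces to a polynomial comparison in $K_{n-1}$ (with polylogarithmic corrections) that holds for all $n\geq 2$ once $\varepsilon$ is small enough, so that $K_1(\varepsilon)$, and hence every $K_{n-1}$, is large enough. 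This monotonicity in turn forces $y_n\geq y_{n-1}>(1-1/l_{n-1})y_{n-1}$, securing the Case~1 ansatz at the next step. The base case $n=2$ must be handled by hand since $g_1$ does not fit the Case~1 formula: $g_1'(y)\to 4<1/j_2$ gives $y_1<\infty$, and $\varphi'(y_1)\geq(1/j_3-1/j_2)/\delta_1$ (which guarantees $y_2$ falls in Case~1 of $g_2$) follows by comparing constants, since $\varphi'(y_1)$ depends only on $r$ while the right-hand side vanishes with $\varepsilon$.

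The main obstacle is the tight interplay between assumption \h, which controls the decay of $j_n\to r/4$ through the exponent $b$, and the fluctuation scale $\delta_n$ inherited from Proposition \ref{prop:fluct}: the borderline $b=2$ would saturate the crucial comparison above (up to logarithms), so the strict condition $b<2$ is precisely what is needed to propagate monotonicity through all scales. Correspondingly, $\gamma$ must be small enough (cf. the assumption $\gamma<2/b-1$ in Proposition \ref{prop: estimation suites}) for the coarse-graining not to swamp the defect signal at each scale.
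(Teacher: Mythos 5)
Your proof is essentially correct and follows the same strategy as the paper's: differentiate the recursion \eqref{gn_1}--\eqref{gn_2} to get explicit piecewise formulas for $g_n'$, locate $y_n$ in the regime beyond the junction (your ``Case~1'') to read off the identity $\varphi'(y_n)=(j_{n+1}^{-1}-j_n^{-1})/\delta_{n-1}$, and reduce monotonicity $y_n\geq y_{n-1}$ to the comparison $t_n\leq t_{n-1}$ — which is exactly the paper's condition $\psi(K_{n-2})\geq 0$, rewritten as an inequality between two successive $t$'s rather than as positivity of a single function of $K$. The base case and the propagation of the Case~1 ansatz match the paper's handling verbatim. One substantive inaccuracy in your closing remarks: the monotonicity comparison here holds for \emph{all} $b\geq 1$ and $\gamma>0$ — it is not tight at $b=2$ — since the relevant exponent $\gamma(1/2-b(1+\gamma)/2)$ is strictly negative as soon as $b(1+\gamma)>1$. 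The strict condition $b<2$ (together with $\gamma<2/b-1$) enters only later, in Lemma~\ref{lemma_induction}, where one needs $\sum_n\Delta_n\to 0$ and hence $\psi_0(K)\sim C(\log K)^6K^{b(1+\gamma)/2-1}$ to vanish; attributing the $b<2$ borderline to the present monotonicity is misleading.
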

\begin{proof}
The proof of  \eqref{value_yn} is split in 3 steps.

\smallskip

\noindent
{\it Preliminary computations.} For $n\geq 1$, we set
\begin{equation}
\label{eq: tn}
t_{n+1}:=
\frac{j_{n+2}^{-1}-j_{n+1}^{-1}}{\delta_{n}}=\psi_3(K_n),
\end{equation}
with
\begin{eqnarray*}
\psi_3(K) & := & (1+\gamma)^{-3/2} \frac{K^{1/2}}{
(\log K)^{3/2}}\left\{
\left(
\frac{r}{4}+\frac{a}{K^{\frac{b}{2}(1+\gamma)^2}}
\right)^{-1}-\left(
\frac{r}{4}+\frac{a}{K^{\frac{b}{2}(1+\gamma)}}
\right)^{-1}
\right\}\\
& \stackrel{K\to+\infty}{\sim} & (1+\gamma)^{-3/2}\frac{16a}{r^2
}K^{\frac{1}{2}-\frac{b}{2}(1+\gamma)}
\stackrel{K\to+\infty}{\longrightarrow} 0.
\end{eqnarray*}
Since $b\geq 1$ 
{\color{black} and $K_1(\varepsilon)$ diverges in the dilute limit  \eqref{conclusion_lemma_good}}, we conclude that 
\be\label{conclude_tn}
\lim_{\varepsilon\to 0}\sup_{n\geq 1}t_{n+1}(\varepsilon)=0.
\ee

\noindent
{\it 
Case $n=2$.}
Since $y_1>(1-l_1^{-1})y_1$, $g'_2(1,y_1)$ is obtained by differentiating \eqref{gn_1}:
$$
g'_2(1,y_1)-j_{3}^{-1}={j_2}^{-1}-j_3^{-1}+\delta_1\varphi'(y_1)=\psi_4(K_1),
$$
where $\psi_4(K_1)>0$, for large $K_1$, because as $K_1\to+\infty$, we have
$$
j_2^{-1}-j_3^{-1}\sim -K_1^{-b(1+\gamma)/2}
\quad \text{with} \quad 
\frac{b(1+\gamma)}{2} > \frac{1}{2}
\quad \text{and} \quad 
\delta_1=C\frac{(\log K_{1})^{3/2}}{K_1^{1/2}},
$$
{\color{black}
(recall  $b\geq 1$ and $\gamma>0$). Thus $y_2>y_1>(1-l_1^{-1})y_1$ as $g'_2(1,y_1)>j_{3}^{-1}$ for $\varepsilon$ small enough.}
Hence, for $y$ in the neighborhood of $y_2$, $g'_2(1,y)$ is also obtained by differentiating the expression \eqref{gn_1}.
It follows that
$$
y_2=\inf\left\{
y\geq 0:\qquad \varphi'(y)\leq \frac{j_3^{-1}-j_2^{-1}}{\delta_1}
\right\}.
$$
Since $\varphi$ is strictly concave and $\lim_{y\to +\infty}\varphi'(y)=0$,  \eqref{conclude_tn} implies that for $\varepsilon$ small enough, $y_2$ is the unique solution of $\varphi'(y_2)=t_2$.
Thus identity \eqref{value_yn} holds for $n=2$.

\medskip

\noindent 
{\it Case $n>2$.} We are going to prove the  claim  by induction.
Suppose that \eqref{value_yn} is valid up to rank $n$. 
To show $y_{n+1} \geq y_n$, it is enough to check that
$$
g'_{n+1}(y_n)>j_{n+2}^{-1}.
$$
Since $y_n>(1-l_n^{-1})y_n$, the above derivative is computed from the expression \eqref{gn_1}. Thus, using the induction hypothesis \eqref{value_yn}, we get for $n\geq 2$
\begin{eqnarray}\nonumber
g'_{n+1}(y_n)-j_{n+2}^{-1} & = &  j_{n+1}^{-1}-j_{n+2}^{-1}+{\delta_n}\varphi'(y_n)
= j_{n+1}^{-1}-j_{n+2}^{-1}+\frac{\delta_n}{\delta_{n-1}}\left(j_{n+1}^{-1}-j_n^{-1}\right)\\
& = & \psi(K_{n-1}) ,
\label{induction_gn}
\end{eqnarray}
 where, since $K_{n}=K_{n-1}^{1+\gamma}$,
\begin{eqnarray}
\nonumber\psi(K) & = & \left(
\frac{r}{4}+\frac{a}{K^{\frac{b}{2}(1+\gamma)^2}}
\right)^{-1}-\left(
\frac{r}{4}+\frac{a}{K^{\frac{b}{2}(1+\gamma)^3}}
\right)^{-1}\\
& + & (1+\gamma)^{3/2}K^{-\gamma/2}\left[
\left(
\frac{r}{4}+\frac{a}{K^{\frac{b}{2}(1+\gamma)^2}}
\right)^{-1}-\left(
\frac{r}{4}+\frac{a}{K^{\frac{b}{2}(1+\gamma)}}
\right)^{-1}
\right].
\label{def_psi}
%
\end{eqnarray}
Let us respectively denote by $\psi_1(K)$ and $\psi_2(K)$ the first and second line on the r.h.s. of \eqref{def_psi}.
Then as $K\to+\infty$, 
$$\psi_1(K)\sim-16ar^{-2}K^{-b(1+\gamma)^2/2},\quad
\psi_2(K)\sim16a(1+\gamma)^{3/2} \, r^{-2}K^{-b(1+\gamma)/2-\gamma/2}.
$$
Since for $b\geq 1$ and $\gamma>0$ we have
$$
\frac{b}{2}(1+\gamma)+\frac{\gamma}{2}<\frac{b}{2}(1+\gamma)^2.
$$
It follows that $\psi(K)>0$ for $K$ large enough. 
As $K_1(\varepsilon)$ diverges when  $\varepsilon$ tends to 0 (see  \eqref{conclusion_lemma_good}), 
we have that for small enough $\varepsilon$,
$y_{n+1} \geq y_n\geq(1-l_n^{-1})y_n$ holds for all $n\geq 2$.\\ \\
As $g'_{n+1}(y_{n+1})$ is given by the derivative of \eqref{gn_1} and $\varphi$ is strictly concave, we have to solve
\begin{equation}
\label{eq: solve yn+1}
g'_{n+1}(y_{n+1})=j_{n+1}^{-1}+{\delta_n}\varphi'(y_{n+1})=j_{n+2}^{-1}
\quad \Rightarrow \quad
\varphi'(y_{n+1})= \frac{j_{n+2}^{-1} - j_{n+1}^{-1}}{\delta_n} = t_{n+1}.
\end{equation}
%
%
As above, \eqref{conclude_tn} implies that, for $\varepsilon$ small enough, a solution of \eqref{eq: solve yn+1} exists for all $n\geq 2$. This proves the second part of the claim \eqref{value_yn}.  The proof is similar for $\sigma=-1$.
%
%
%
%
\end{proof}
%
%
%
Using Lemma \ref{lemma_yn}, 
we can now  complete the proof of Proposition 
\ref{prop:estimation_gen}.
We must show that  inequality \eqref{better_recursion} holds for the sequence $(\rho_n)_{n\in\N^*}$ with $\Delta_n$ satisfying \eqref{def_Delta_n}.
%
%
By definition \eqref{def_yn} of $y_n$, 
{\color{black} the supremum in \eqref{better_J} is reached at $y_n$ so that} 
$$
\rho_n=j_{n+1}g_n(y_n)-y_n.
$$
We are going to obtain a recursion for $\rho_n$. To this end, consider
$$
\rho_{n+1}=j_{n+2}g_{n+1}(y_{n+1})-y_{n+1}.
$$
By Lemma \ref{lemma_yn}, $y_{n+1} \geq y_n>(1-l_n^{-1})y_n$, so $g_{n+1}(y_{n+1})$ is obtained from \eqref{gn_1}. Thus
\begin{eqnarray}
\rho_{n+1} & =  j_{n+2}\left(1-\frac{1}{l_n}\right)\left[
g_n(y_n)- \frac{y_n}{j_{n+1}}
\right]+ \frac{j_{n+2}}{l_n \, j_{n+1}}+ \left( \frac{j_{n+2}}{j_{n+1}} - 1 \right)  y_{n+1}
+ j_{n+2}\delta_n\varphi(y_{n+1})\nonumber\\
& \leq  \frac{j_{n+2}}{j_{n+1}} \left( \left(1-\frac{1}{l_n}\right)\left[
j_{n+1}g_n(y_n)- {y_n}
\right]+ \frac{1}{l_n} +j_{n+1}\delta_n\varphi(y_{n+1})
\right),
\label{sn_part_one}
\end{eqnarray}
where on the second line we have used $j_{n+2}\leq j_{n+1}$. Setting ${\Delta}_n:=j_{n+1}\delta_n\varphi(y_{n+1})$, we recovered the inequality  \eqref{better_recursion}, and it remains to verify \eqref{def_Delta_n}.
Starting from 
$$
\varphi'(y)\stackrel{y\to+\infty}{\sim}
\frac{1}{2\sqrt{y}}
(\log y)^{3/2},
$$
we see that
\be
\label{phi_prime}
\varphi[\varphi^{'-1}(t)]\stackrel{t\to 0}{\sim}\frac{1}{2t}
\left(\log\frac{1}{4t^2}\right)^3.
\ee
Recall that 
by \eqref{eq: solve yn+1}, $y_{n+1}=\varphi^{'-1}(t_{n+1})$, where
$t_n$ is defined by \eqref{eq: tn} and satisfies \eqref{conclude_tn}.
Thus,
there exist $C',C''>0$ and
$\varepsilon_2>0$ such that, for every $0<\varepsilon\leq\varepsilon_2$ and $n \geq 1 $
$$
\varphi(y_{n+1})=\varphi[\varphi^{'-1}(t_{n+1})]\leq C''\frac{1}{t_{n+1}} \, \big|\log t_{n+1} \big|^{3}
\leq C' \frac{\delta_n}{2(j_{n+2}^{-1}-j_{n+1}^{-1})}
\left[
\log\left(
\frac{\delta_n}{j_{n+2}^{-1}-j_{n+1}^{-1}}
\right)
\right]^3.
$$
This implies \eqref{def_Delta_n} with  ${\Delta}_n=j_{n+1}\delta_n\varphi(y_{n+1})$.

\medskip

For $\sigma = -1$, 
still writing $\rho_n$ for $\rho_n^{\sigma}$, we have
$$\rho_n - 1= \sup_{y \geq 1}
\big\{ j_{n+1} g_n(-1,y) - y \big\}$$
and we get a  recursion similar to \eqref{sn_part_one}   
\begin{eqnarray*}
\rho_{n+1} - 1 \leq  \frac{j_{n+2}}{j_{n+1}} \left( \left(1-\frac{1}{l_n}\right)\left[
\rho_n - 1
\right]  + j_{n+1}\delta_n\varphi(y_{n+1})
\right),
\end{eqnarray*}
which can be rewritten
\begin{eqnarray*}
\rho_{n+1} \leq  \frac{j_{n+2}}{j_{n+1}} \left( \left(1-\frac{1}{l_n}\right) \rho_n 
+ \frac{1}{l_n} +
j_{n+1}\delta_n\varphi(y_{n+1})
+ \frac{j_{n+1}}{j_{n+2}} -1
\right).
\end{eqnarray*}
For $b \geq 1$, the remainder $\frac{j_{n+1}}{j_{n+2}} -1$ can be bounded by $\Delta_n$ so that the same type of inequality is also valid for $\sigma = -1$.

\subsection{Proof of Proposition \ref{prop: estimation suites}}
\label{proof_seq}
%
Let $a_n=1-\frac{1}{l_n}$. Then one can see by induction that 
 \eqref{better_recursion} implies
\begin{eqnarray}
\rho_n^\sigma & \leq & \rho_1^\sigma \prod_{i=1}^{n-1}a_i+\left(
1-\prod_{i=1}^{n-1}a_i
\right)
+\sum_{i=1}^{n-1}\Delta_i\prod_{j=i+1}^{n-1}a_j\nonumber\\
& \leq & \rho_1^\sigma \prod_{i=1}^{n-1}a_i+\left(
1-\prod_{i=1}^{n-1}a_i
\right)
+\sum_{i=1}^{n-1}\Delta_i ,
\label{induction_rho}
\end{eqnarray}
where we used that $\frac{j_{i+1}}{j_i}\leq 1$ for any $i \geq 1$.
Remember that the quantities $\rho_n$, $j_n$, $a_n$, $\Delta_n$ actually depend on $\varepsilon$.
Since $g_1$ is given by \eqref{level_1}, a simple computation shows that
\begin{equation}\label{eq:rho1}
\rho_1^1 :=\sup_{y\geq 0} \Big\{ j_2 \, g_1(1,y)-y \Big\} 
\end{equation}
is the smaller root $\rho$ of the equation
\begin{equation}\label{eq:j2}
\rho(1-\rho)=j_2,
\end{equation}
and that the supremum in (\ref{eq:rho1}) is achieved at 
{\color{black} 
$y_1^1:=\frac{\rho_1^2}{1-2\rho_1}.$
For $\sigma = -1$, we have
}
\begin{equation}
\label{eq:rhoprime1}
\rho_1^{-1}:=\sup_{y\geq 1} \Big\{ j_2 \, g_1(-1,y)-y \Big\}+1=\rho_1^1
\end{equation}
and the supremum achieved for {\color{black} $y_1^{ -1}:=\frac{(1-\rho_1)^2}{1-2\rho_1}
$.
}
In particular, \textcolor{black}{since  the divergence  \eqref{conclusion_lemma_good}} of $K_1$ 
  implies $\lim_{\varepsilon\to 0}j_2(\varepsilon)=r/4$, we also have
\begin{equation}
\label{rho1_dilute}
\lim_{\varepsilon\to 0}\rho_1^\sigma (\varepsilon)=\frac{1}{2}\left(1-\sqrt{1-r}\right)=\rho_c(0)
\end{equation}
that is the lower solution of (\ref{eq:j2}) with $r/4$ instead of $j_2$.
This says that  the approximation   after one step of  renormalization is close to the dilute limit.
{\color{black} Lemma \ref{lemma_induction}, stated below, shows that 
 $\rho_n(\varepsilon)$ remains close to $\rho_c(0)$ for $\varepsilon$ small.}
By (\ref{rho1_dilute}), \eqref{induction_rho} and Lemma \ref{lemma_induction} below, we have
\begin{equation}
\label{wehave}
\limsup_{\varepsilon\to 0}\limsup_{n\to+\infty}\rho_n(\varepsilon)
\leq\rho_c(0).
\end{equation}
{\color{black} This completes the proof of Proposition \ref{prop: estimation suites}.}
\begin{lemma}
\label{lemma_induction} 
Assume $\hh$ with $b\in[1,2)$ and
{\color{black}  $K_1$ satisfies \eqref{conclusion_lemma_good}.}
With the notation of Lemma \ref{lemma_good}, we fix 
$\gamma < \min \big\{ \gamma_0, \frac{2}{b} -1 \big\}$.
Then 
%
\begin{enumerate}
\item[(1)] $\lim_{\varepsilon\to 0}\prod_{n=1}^{+\infty} a_n(\varepsilon)=1$,
\item[(2)] $\lim_{\varepsilon\to 0}\sum_{n=1}^{+\infty}\Delta_n(\varepsilon)=0$.
\end{enumerate}
\end{lemma}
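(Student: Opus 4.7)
The plan is to exploit the doubly-exponential growth of the scale sequence $K_n$ together with the smallness of the fluctuation step $\delta_n$ and the structural condition $b(1+\gamma) < 2$. Both parts will follow from estimating $\sum_{n \geq 1} K_n^{-\kappa}$ for an appropriate $\kappa > 0$ and observing that this sum vanishes as $K_1(\varepsilon) \to \infty$. For $K_1$ large enough (which holds for $\varepsilon$ small by Lemma \ref{lemma_good}), $l_n = \lfloor K_n^\gamma \rfloor \geq \tfrac{1}{2}K_n^\gamma$, so $K_{n+1} \geq \tfrac{1}{2}K_n^{1+\gamma}$; iterating yields $K_n \geq K_1^{(1+\gamma/2)^{n-1}}$. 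Consequently, for any fixed $\kappa > 0$ and any polynomial $P$, one has $\sum_{n\geq 1} K_n^{-\kappa} P(\log K_n) \leq C(\kappa, P)\,K_1^{-\kappa/2}$, which vanishes as $\varepsilon \to 0$.

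\medskip

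For (1), since $l_n \to \infty$ uniformly in $n$ for $\varepsilon$ small, $\log a_n = \log(1 - 1/l_n) \geq -2/l_n$, hence $\prod_{n \geq 1} a_n \geq \exp\bigl(-2\sum_{n \geq 1} 1/l_n\bigr)$. Using $l_n \geq \tfrac{1}{2} K_n^\gamma$ and the growth estimate above, $\sum_{n \geq 1} 1/l_n \leq 4 \sum_{n \geq 1} K_n^{-\gamma} \to 0$ as $\varepsilon \to 0$, and (1) follows.

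\medskip

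For (2), I start from \eqref{def_Delta_n}. A direct computation gives
\[
j_{n+2}^{-1} - j_{n+1}^{-1} = \frac{j_{n+1} - j_{n+2}}{j_{n+1} j_{n+2}} \sim \frac{16 a}{r^2}\,K_{n+1}^{-b/2},
\]
since $j_n \to r/4$ and $K_{n+2}^{-b/2}/K_{n+1}^{-b/2} = K_{n+1}^{-b\gamma/2} \to 0$. Combining with $\delta_n^2 = C^2 K_n^{-1}(\log K_{n+1})^3$ and $K_{n+1} \leq K_n^{1+\gamma}$ yields
\[
\frac{\delta_n^2}{j_{n+2}^{-1} - j_{n+1}^{-1}} \leq C'\,K_n^{b(1+\gamma)/2 - 1}(\log K_n)^3.
\]
The ratio $\delta_n/(j_{n+2}^{-1} - j_{n+1}^{-1})$ grows at most polynomially in $K_n$ (its exponent is $-\tfrac12 + b(1+\gamma)/2 \geq \gamma/2 > 0$ since $b\geq 1$), so the cubic log factor in \eqref{def_Delta_n} contributes only $O((\log K_n)^3)$. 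Using $j_{n+1} \leq 1$, we obtain $\Delta_n \leq C''\,K_n^{-\kappa}(\log K_n)^6$ with $\kappa = 1 - b(1+\gamma)/2 > 0$, which is positive by the hypothesis $\gamma < 2/b - 1$. The growth estimate for $K_n$ then gives $\sum_{n \geq 1} \Delta_n \leq C''' K_1^{-\kappa/2} \to 0$.

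\medskip

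The delicate point will be the interplay of the polynomial decay $K_n^{-\kappa}$ with the polylogarithmic factor in \eqref{def_Delta_n}: one must verify that the argument of the cube of the logarithm grows at most polynomially in $K_n$, so that the extra $(\log K_n)^3$ is absorbed by any strictly positive polynomial decay. This is precisely what fails when $b(1+\gamma) \geq 2$, and explains the borderline nature of the threshold $b < 2$ in assumption \hh\ observed after \eqref{condition_h_2}.
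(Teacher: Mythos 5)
Your proof is correct and follows essentially the same route as the paper: exploit the doubly exponential growth of $(K_n)$, bound $1/l_n$ and $\Delta_n$ by $K_n^{-\kappa}$ (up to logarithmic factors) with $\kappa>0$ guaranteed by the constraint $\gamma<\min\{\gamma_0,\,2/b-1\}$, and conclude from the resulting summable dominating sequence. The only cosmetic differences are that you track the floor in $l_n=\lfloor K_n^\gamma\rfloor$ a bit more carefully (obtaining $K_n\geq K_1^{(1+\gamma/2)^{n-1}}$ rather than the paper's informal $K_n=K_1^{(1+\gamma)^{n-1}}$), and you produce a quantitative $O(K_1^{-\kappa/2})$ bound where the paper invokes dominated convergence.
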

%
%
\begin{proof}
\ \\
\noindent
{\it Proof of (1).} We have to show that
\be\label{limit_1}
\lim_{\varepsilon\to 0}\sum_{n=1}^{+\infty} \log \left(1-\frac{1}{l_n(\varepsilon)}\right)=0 .
\ee
Since 
$$
l_n(\varepsilon)=\exp\left[
\log \big(K_1(\varepsilon) \big) \, \gamma(1+\gamma)^{n-1}
\right]\geq \exp\left[
\gamma(1+\gamma)^{n-1}
\right],
$$
we have, for $n\geq 2$,  $l_n(\varepsilon)^{-1}\leq C(\gamma):=e^{-\gamma(1+\gamma)}<1$. Hence, for $n\geq 2$,
$$
0\leq -
\log \left(1-\frac{1}{l_n(\varepsilon)}\right)\leq \frac{1}{l_n(\varepsilon)}+\frac{C'(\gamma)}{l_n(\varepsilon)^2}
\leq (1+C'(\gamma))\exp\left[
-\gamma(1+\gamma)^{n-1}
\right].
$$
The limit \eqref{limit_1} then follows from dominated convergence, and $\lim_{\varepsilon\to 0}K_1(\varepsilon)=+\infty$, which implies $\lim_{ \varepsilon\to 0}l_n(\varepsilon)=\textcolor{black}{+\infty}$ for any $n\geq 1$.\\ \\
{\it Proof of (2).} Here we  can write  $\Delta_n \textcolor{black}{\stackrel{n\to+\infty}{\sim }}\psi_0(K_n)$, where
\begin{eqnarray}
& {\psi}_0(K)  := \frac{(\log K)^3}{K}\left[
\left(\frac{r}{4}+
\frac{a}{K^{\frac{b}{2} (1+\gamma)^2}}
\right)^{-1}
- \left(\frac{r}{4}+
\frac{a}{K^{\frac{b}{2}  (1+\gamma)}}
\right)^{-1}
\right]^{-1} \nonumber \\
& \qquad \qquad \qquad \qquad  \times  
\left(
\log\left\{
\frac{(\log K)^{3/2}}{\sqrt{K}}
\left[
\left(\frac{r}{4}+
\frac{a}{K^{\frac{b}{2} (1+\gamma)^2}}
\right)^{-1}
- \left(\frac{r}{4}+
\frac{a}{K^{\frac{b}{2}  (1+\gamma)}}
\right)^{-1}
\right]^{-1}
\right\}
\right)^3
\label{eq: b<2}\\
& \qquad \qquad 
\stackrel{K\to+\infty}{\sim} 
C'' (\log K)^{6} K^{\frac{b}{2}(1+\gamma)-1}.
\nonumber
\end{eqnarray}
%
%
%
for some constant $C''>0$. The assumption on $b$ and the choice of $\gamma$ imply that
$c:=1-\frac{b}{2}(1+\gamma)>0$
(Equation \eqref{eq: b<2} is the main reason for restricting to the case $b<2$). 
By \eqref{conclusion_lemma_good}, there exists $\varepsilon_1>0$ such that $K_n(\varepsilon)\geq 2$ for  every $n\geq 1$ and $\varepsilon\in[0,\varepsilon_1]$.
Thus, by \eqref{eq: b<2}, there exists a constant $D>0$ such that, for such $n$ and $\varepsilon$,
$$
\Delta_n(\varepsilon)\leq \frac{D}{K_n(\varepsilon)^{c}}\leq\frac{D}{K_1(\varepsilon)^{c(1+\gamma)^n}}\leq \frac{D}{2^{c(1+\gamma)^n}} .
$$
Since $\lim_{\varepsilon\to 0}K_1(\varepsilon)=+\infty$, the result follows again from dominated convergence.
\end{proof}

\subsection{Proof of Proposition \ref{prop:passage_dilute}}

Note that 
\begin{equation}
\label{dilute_product}
\lim_{\varepsilon\to 0}
\prod_{n=1}^{+\infty}\frac{l_n(\varepsilon)}{l_n(\varepsilon)-1}=1.
\end{equation}
{\color{black} 
Thus for any \textcolor{black}{$\upsilon>0$}, there exists $\varepsilon_*>0$ such that, for $\varepsilon \leq \varepsilon_*$, the following holds
\begin{equation}
\label{eq:epstar}
\prod_{n=1}^{+\infty}\frac{l_n(\varepsilon)}{l_n(\varepsilon)-1}<1+\textcolor{black}{\upsilon} ,
\end{equation}
and $(y_n)_{n\geq 0}$ is an increasing sequence thanks to Lemma \ref{lemma_yn}.
We fix $y < \frac{y_1}{1+\textcolor{black}{\upsilon}}$ and $\varepsilon \leq \varepsilon_*$.
For any $N \in \mathbb{N}^*$, we define the sequence 
\begin{equation}
\label{finiteprod_dilute_k}
y_{N,N}:=y 
\quad \text{and} \quad 
\forall n \in \{ 1, N-1 \}, \qquad y_{n,N}:=
\prod_{k= n + 1}^N \frac{l_k}{l_k-1} y  \leq y_1 \leq y_n.
\end{equation}
As $\frac{l_n}{l_n-1}y_{n,N}=y_{n-1,N}\leq y_n$, then 
 $g_{n+1}(1,y_{n,N})$ is determined by \eqref{gn_2} so that 
\begin{eqnarray}
\label{recursion}
g_{n+1}(1,y_{n,N}) = \left(
1-\frac{1}{l_n}
\right)g_n\left(
1,
y_{{n-1,N}}
\right)+\frac{1}{l_n}{j_{n+1}}+\delta_n\varphi\left(y_{{{n,N}}}\right).
\end{eqnarray}
Starting from $y_{N,N}:=y $ and proceeding recursively, we deduce that 
\begin{align} 
g_{N+1}(1,y) \leq  \prod_{n=1}^N\left(1-\frac{1}{l_n}\right)g_{\textcolor{black}{1}}\left[
%
%
1,\prod_{n=1}^N\frac{l_n}{l_n-1}y
\right]
+\frac{4}{r}\sum_{n=1}^N 
\frac{1}{l_n}
+\sum_{n=1}^N\delta_n\varphi \left(   
\prod_{r =n+1}^N\frac{l_r}{l_r-1}y 
\right) .
\label{eventually}
\end{align}
From \eqref{rho1_dilute}, we know that $y_1 = y_1 (\varepsilon)$ converges to $y_1(0)
=\frac{\rho_c(0)^2}{1-2\rho_c(0)}$. Furthermore  $\lim_{\varepsilon\to 0}l_n(\varepsilon)=+\infty$ and $\lim_{\varepsilon\to 0}\delta_n(\varepsilon)=0$. Thus  it follows from (\ref{eventually}) that 
\begin{equation}
\label{lim_gN}
\forall y< \frac{y_1(0)}{1+\textcolor{black}{\upsilon}}, \qquad 
\limsup_{\varepsilon\to 0} \limsup_{N\to+\infty}g_N(1,y){\leq}g_1(y) .
\end{equation}
In the dilute limit,  $\textcolor{black}{\upsilon}$ can be  arbitrarily small so that the inequality above holds more generally for $y<y_1 (0)$.
A similar result holds for $\sigma = -1$.}

\section{Fluctuation bounds : Proof of Proposition \ref{prop:fluct}} 
\label{sec: prop passage time}

{\color{black}
Proposition \ref{prop:fluct} is proved in this section. 
Preliminary estimates are stated in Subsection \ref{subsec:conc} and then applied in Subsection \ref{fluct_ent}, which is the body of the proof.
}

\subsection{Concentration estimates}
\label{subsec:conc}
We shall need  a classical gaussian concentration inequality for last passage times. In the following lemma, it is assumed that the service times $Y_{i,j}$  involved in the definition \eqref{def_passage}--\eqref{def_last} of last passage times
are i.i.d. random variables  {\it bounded} by $M$ instead of being exponentially distributed. To avoid confusion with the previous notation, the corresponding probability $\Prob_M$ and expectation $\Exp_M$ are denoted below by an index $M$.
\begin{lemma}\cite[Lemma 3.1]{mar}
\label{lemma_concentration}
Assume that $Y=(Y_{i,j}:\,(i,j)\in\Z\times\N)$ is a vector of non negative independent random variables bounded from above by $rM$.
Let $(x_1,y_1)$ and $(x_2,y_2)$ in $\Z\times\N$ be such that $(x_2-x_1,y_2-y_1) \in \mathcal W$. Then 
\begin{eqnarray}\nonumber
T^\alpha \big( (x_1,y_1),(x_2,y_2) \big)  =  \Exp_M\left[
T^\alpha \big( (x_1,y_1),(x_2,y_2) \big)
\right]
+  8M\sqrt{L((x_1,y_1),(x_2,y_2))}Z,
\label{concentration_inequality}
%
\end{eqnarray}
where $L((x_1,y_1),(x_2,y_2)):=(x_2-x_1)+2(y_2-y_1)$ is the length of any path connecting $(x_1,y_1)$ to $(x_2,y_2)$, and $Z$ is a random variable with subgaussian tail
$$
\forall t \geq 0, \qquad \Prob_M(|Z|\geq t)\leq \exp(-t^2) \, .
$$
\end{lemma}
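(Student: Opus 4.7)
The plan is to recognize $T^\alpha\!\big((x_1,y_1),(x_2,y_2)\big)$ as a convex Lipschitz function of a bounded independent family, and to invoke Talagrand's concentration inequality for such functionals. First I would rescale the service times by setting $\tilde Y_{i,j}:=Y_{i,j}/\alpha(i)$. Since $\alpha(i)\ge r$ and $Y_{i,j}\le rM$, these variables are independent and take values in $[0,M]$. The passage time, viewed as a function of $\tilde Y$, is
\[
F(\tilde Y):=T^\alpha\!\big((x_1,y_1),(x_2,y_2)\big)=\max_{\gamma\in\Gamma((x_1,y_1),(x_2,y_2))}\;\sum_{(i,j)\in\gamma}\tilde Y_{i,j},
\]
a maximum of linear functions of the $\tilde Y_{i,j}$, hence convex.

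Next I would bound the $\ell^2$-Lipschitz constant of $F$. Every path in $\Gamma((x_1,y_1),(x_2,y_2))$ has the same number of vertices, namely $L+1$ where $L=(x_2-x_1)+2(y_2-y_1)$ is the length from the statement. For two configurations $\tilde y,\tilde y'$, picking $\gamma^*$ optimal for $\tilde y$ and applying Cauchy--Schwarz,
\[
F(\tilde y)-F(\tilde y')\;\le\;\sum_{(i,j)\in\gamma^*}\big(\tilde y_{i,j}-\tilde y'_{i,j}\big)\;\le\;\sqrt{L+1}\;\|\tilde y-\tilde y'\|_2,
\]
and by symmetry $F$ is $O(\sqrt{L})$-Lipschitz.

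Talagrand's concentration inequality for convex Lipschitz functions of independent random variables supported in $[0,M]$ then yields
\[
\Prob_M\!\big(|F-\Exp_M[F]|\ge s\big)\;\le\;C\exp\!\big(-c\,s^2/(M^2 L)\big)
\]
for universal constants $C,c>0$. Absorbing these constants into the prefactor and setting $s=8M\sqrt{L}\,t$, the deviation can be written as $T^\alpha-\Exp_M[T^\alpha]=8M\sqrt{L}\,Z$ with $\Prob_M(|Z|\ge t)\le\exp(-t^2)$, which is the claim.

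The only delicate point is the Cauchy--Schwarz step that converts the naive $\ell^1$ bound of order $L$ into an $\ell^2$ bound of order $\sqrt{L}$: without this refinement, a direct Azuma-type martingale argument would pick up the total number of service times in the enclosing rectangle (of order $L^2$), and would be far too weak to sustain the fluctuation estimate used in Proposition~\ref{prop:fluct}.
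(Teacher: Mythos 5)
The paper does not reprove this lemma; it cites it directly from Martin \cite[Lemma~3.1]{mar}. Your proposal correctly reconstructs Martin's argument: the rescaled passage time is a maximum of linear functionals of independent $[0,M]$-valued variables, hence convex, the fact that every admissible path has exactly $L+1$ vertices combined with Cauchy--Schwarz gives the crucial $O(\sqrt{L})$ rather than $O(L)$ $\ell^2$-Lipschitz constant, and Talagrand's concentration inequality for convex Lipschitz functionals of bounded independent variables then yields the subgaussian tail. Two small points you pass over, which Martin must handle to get the clean constant $8$ with prefactor $1$ in $\Prob_M(|Z|\geq t)\le e^{-t^2}$: Talagrand's inequality concentrates around the median, so one must fold the median-to-mean gap (itself of order $M\sqrt{L}$) into the prefactor; and one must check that the resulting numerical constants actually fit under $e^{-t^2}$ for all $t\ge 0$, not just asymptotically. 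These are routine but not automatic, and they are exactly what the generous factor $8$ is designed to absorb.
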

We stress the fact that Gaussian bounds on last passage times are by no means optimal in the case of exponential service times, for which more refined (but also more specific)  gaussian-exponential estimates are available (see e.g. \cite{tal}). However, for our purpose, they have the advantage of being both simple and sufficient,
while also extending to service distributions with heavier tails, as a result of the cutoff procedure introduced in Subsection \ref{fluct_ent}. 

\medskip

The above concentration inequality will be combined with the following result,  established in Appendix \ref{app:proof_cor_max}.
\begin{lemma}\label{cor_max}
Let $\mathcal A$ and $\mathcal I$ be finite sets. Assume that for each $a\in\mathcal A$, we have a 
family 
$({\mathcal Y}_{a,i})_{i\in \mathcal I}$ of independent random variables such that, for every $i\in\mathcal I$,
\be
\label{hyp_normal}
{\mathcal Y}_{a,i}  \, = \,
\Exp \big( {\mathcal Y}_{a,i} \big) + \sqrt{V_{a,i}} Z_{a,i},
\ee
where $V_{a,i}>0$, and $Z_{a,i}$ is a random variable such that 
\be
\label{normal_tail}
\Prob(Z_{a,i}\geq t)\leq e^{-t^2},
\ee
for every $t\geq 0$. Then 
\begin{eqnarray}
\Exp \left( \max_{a\in\mathcal A}\sum_{i\in\mathcal I} {\mathcal Y}_{a,i} \right) 
& \leq & 
\max_{a\in\mathcal A}\sum_{i\in\mathcal I} 
\Exp \big( {\mathcal Y}_{a,i}\big)  \nonumber\\
& &  +
\left(
\max_{a\in\mathcal A}\sum_{i\in\mathcal I}V_{a,i}
\right)^{\frac{1}{2}}
\left(
\sqrt{\pi}\sqrt{|\mathcal I|}+\sqrt{\pi}\sqrt{A}+\sqrt{A}\sqrt{\log|\mathcal A|}
\right),
\label{exp_max}
\end{eqnarray}
where $|.|$ denotes  the cardinality, and $A$ is \textcolor{black}{a universal constant.}
\end{lemma}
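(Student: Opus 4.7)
The plan is to separate the deterministic means from the fluctuations and then bound the expected maximum of the fluctuation sums by combining a first-moment estimate (via Cauchy--Schwarz and integration of the tail \eqref{normal_tail}) with a sub-Gaussian maximal inequality (via the MGF bound of Lemma \ref{lemma_normal}(ii)).

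First I would set $W_a := \sum_{i\in\mathcal I}\sqrt{V_{a,i}}\,Z_{a,i}$. Taking expectations in \eqref{hyp_normal} forces $\Exp Z_{a,i}=0$, hence $\Exp W_a=0$. The elementary inequality $\max_a(x_a+y_a)\leq \max_a x_a+\max_a y_a$ then gives
\[
\Exp\!\Big[\max_a\sum_i \mathcal Y_{a,i}\Big] \leq \max_a\sum_i \Exp\mathcal Y_{a,i} + \Exp[\max_a W_a],
\]
and since $W_a\leq|W_a|$, the task reduces to controlling $\Exp[\max_a|W_a|]$, which I would split further as $\max_a \Exp|W_a| + \Exp[\max_a(|W_a|-\Exp|W_a|)]$.

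For the first piece, integrating \eqref{normal_tail} yields $\Exp Z_{a,i}^+\leq \int_0^\infty e^{-t^2}dt=\sqrt{\pi}/2$; since $Z_{a,i}$ is centered, $\Exp|Z_{a,i}|=2\Exp Z_{a,i}^+\leq\sqrt{\pi}$. By triangle inequality and Cauchy--Schwarz in $i$,
\[
\Exp|W_a| \leq \sum_i \sqrt{V_{a,i}}\,\Exp|Z_{a,i}| \leq \sqrt{\pi}\sqrt{|\mathcal I|}\sqrt{\textstyle\sum_i V_{a,i}} \leq \sqrt{\pi}\sqrt{|\mathcal I|}\,\sigma,
\]
where $\sigma := \sqrt{\max_a \sum_i V_{a,i}}$, producing the first term in \eqref{exp_max}. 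For the second piece, Lemma \ref{lemma_normal}(ii) provides $\Exp e^{\lambda Z_{a,i}}\leq e^{A\lambda^2}$; by independence in $i$, $\Exp e^{\lambda W_a}\leq e^{A\lambda^2\sigma^2}$ for all $\lambda\in\mathbb R$, which gives the two-sided sub-Gaussian tail $\Prob(|W_a|>t)\leq 2e^{-t^2/(4A\sigma^2)}$. A union bound across $\mathcal A$ and the identity $\Exp X^+=\int_0^\infty\Prob(X>t)\,dt$ then yield
\[
\Exp\!\big[\max_a(|W_a|-\Exp|W_a|)\big] \leq \int_0^\infty \min\!\big(1,\, 2|\mathcal A|\,e^{-t^2/(4A\sigma^2)}\big)\,dt,
\]
which I would split at the threshold $t_0=2\sigma\sqrt{A\log(2|\mathcal A|)}$ where the union bound saturates: the lower part contributes $t_0\sim \sigma\sqrt{A\log|\mathcal A|}$, and the shifted Gaussian estimate $2|\mathcal A|\int_{t_0}^\infty e^{-t^2/(4A\sigma^2)}\,dt\leq \sqrt{\pi A}\,\sigma$ (obtained by the substitution $u=t/(2\sigma\sqrt A)$ and the translation $u\mapsto u+\sqrt{\log(2|\mathcal A|)}$ which extracts the factor $e^{-u_0^2}=1/(2|\mathcal A|)$) produces the residual $\sqrt{\pi}\sqrt{A}\,\sigma$. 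Adding the three contributions yields \eqref{exp_max}.

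The only real obstacle is bookkeeping: pinning down the three constants $\sqrt{\pi}\sqrt{|\mathcal I|}$, $\sqrt{\pi}\sqrt{A}$ and $\sqrt{A\log|\mathcal A|}$ exactly requires the specific tail manipulations described above, and one needs to verify that the MGF bound from Lemma \ref{lemma_normal}(ii) is indeed two-sided (otherwise the tail of $|W_a|$ would not be controlled on both sides). Conceptually there is no further difficulty beyond this calibration.
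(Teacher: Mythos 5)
Your decomposition into means plus fluctuations is natural, and the first-moment piece ($\max_a\Exp|W_a|\leq\sqrt\pi\sqrt{|\mathcal I|}\,\sigma$) is fine. The genuine gap is in the second piece: you assert a two-sided sub-Gaussian bound $\Exp e^{\lambda Z_{a,i}}\leq e^{A\lambda^2}$ for all $\lambda\in\mathbb R$, hence $\Prob(|W_a|>t)\leq 2e^{-t^2/(4A\sigma^2)}$. Neither the hypothesis \eqref{normal_tail} nor Lemma \ref{lemma_normal}(ii) gives this. The tail hypothesis is purely one-sided; it says nothing about $\Prob(Z_{a,i}\leq -t)$, and the centering $\Exp Z_{a,i}=0$ does not help: it only forces $\Exp Z_{a,i}^-=\Exp Z_{a,i}^+\leq\sqrt\pi/2$, but $\Exp (Z_{a,i}^-)^2$ can be arbitrarily large or infinite, so $\Exp e^{\lambda Z_{a,i}}$ for $\lambda<0$ is uncontrolled and can even be infinite. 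Moreover, the one-sided MGF estimate inside the proof of Lemma \ref{lemma_normal}(ii) is for the shifted variable $Z_{a,i}-\sqrt\pi$, not for $Z_{a,i}$ itself, and $\Exp e^{\theta Z_{a,i}}\leq e^{A\theta^2}$ fails even for small $\theta>0$ when $\operatorname{Var}(Z_{a,i})$ is large. So the bound $\Prob(|W_a|>t)\leq 2e^{-t^2/(4A\sigma^2)}$ is not just unverified, it is false under the stated assumptions, and the subsequent integral estimate collapses.

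The fix is that you never needed $|W_a|$. Since you only want an upper bound on $\Exp[\max_a W_a]$, a one-sided tail is enough, and that is exactly what the paper uses. Its route is: apply Lemma \ref{lemma_normal}(ii) to each $a$ to write $\sum_i\mathcal Y_{a,i}=\sum_i\Exp\mathcal Y_{a,i}+\sqrt\pi\sum_i\sqrt{V_{a,i}}+(A\sum_i V_{a,i})^{1/2}Z_a$ with $\Prob(Z_a\geq t)\leq e^{-t^2}$; absorb the $\sqrt\pi$-shift via Cauchy--Schwarz into a deterministic level $m$; then a union bound over $\mathcal A$ gives $\Prob(\max_a\sum_i\mathcal Y_{a,i}\geq m+t)\leq|\mathcal A|\,e^{-t^2/(AV)}$, and Lemma \ref{lemma_normal}(i) converts this directly into $m+\sqrt{AV\log|\mathcal A|}+\sqrt{AV}\,Z$ with $\Prob(Z\geq t)\leq e^{-t^2}$, whence $\Exp Z\leq\Exp Z^+\leq\sqrt\pi$. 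If you drop the detour through $|W_a|$ and bound $\Exp[\max_a W_a]$ by integrating the one-sided tail after the union bound (using the $\sqrt\pi$-shifted form of Lemma \ref{lemma_normal}(ii), not a raw MGF for $Z_{a,i}$), your scheme becomes essentially the paper's argument and the constants come out as in \eqref{exp_max}.
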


\subsection{Path renormalization: fluctuation and entropy}
\label{fluct_ent}
We now proceed in three steps. 
In step one, we define a cutoff procedure for the service times $Y_{i,j}$, by conditioning on their maximum,  in order to replace them with bounded variables, to which the  results of Subsection \ref{subsec:conc} apply.
In step two, we apply Lemma \ref{cor_max} to passage times in subblocks. This yields for the cutoff service times a result similar to the statement of proposition \ref{prop:fluct},
but without the whole logarithmic correction.
Finally, in step three, we remove the cutoff and use a bound on the expectation of the maximum of exponential variables, to obtain a quasi-gaussian estimate with a logarithmic correction.\\ \\ 
{\bf Step 1.} {\it Notation and conditional measure.}
Pick $\gamma$ such that 
\be\label{choose_gamma}
0<\gamma<\min \big\{ \gamma_0,(2/b)-1 \big\} ,
\ee
with $\gamma_0$ introduced in Lemma \ref{lemma_good}, and $b$ in  \eqref{condition_h_2} and
\eqref{condition_h}.
Let $B=\Z\cap[0,\sigma(K_{n+1}-1)]$ be a block of order $n+1$
and partition $B$ into  subblocks of  level $n$ denoted by
$B_l=[\sigma (l-1)K_n,\sigma (lK_n-1)]\cap\Z$, where $l=1,\ldots,l_n$.

\medskip

Set $y'= \lfloor K_{n+1}y\rfloor$, 
$\tilde{\Gamma}_n=\tilde{\Gamma}_n((0,0),(\sigma(K_{n+1}-1),y'))$
and define 
%
$$
%
M_B(y):=
\max \big \{ Y_{i,j}:\,i\in B,\,j=0,\ldots,y'= \lfloor K_{n+1}y\rfloor  \big  \} .
$$
Given $M>0$, denote by $\Prob_{B,M,y'}$ the distribution of $(X_{i,j}:\,\,i\in B,j=0,\ldots,y')$, where $X_{i,j}$ are i.i.d. and have the same distribution as  $Y_{i,j}$ conditioned on $Y_{i,j}\leq rM$.
(Note that after conditioning by $rM$, the percolation paths have weights $\frac{Y_{i,j}}{\alpha} \leq M$ as $\alpha \geq r$).
Denote by $\Prob'_{B,M,y'}$ the distribution of $(Y_{i,j}:\,i\in B,\,j=0,\ldots,y')$ conditioned on $M_B(y)=rM$. 
%
\textcolor{black}{
The reason why we introduce the two different distributions $\Prob_{B,M,y'}$ and $\Prob'_{B,M,y'}$
is that we can apply Lemma \ref{lemma_concentration} to the former, while the latter is obtained by conditioning the actual joint law of the r.v.'s $Y_{i,j}$ on their maximum. A useful relation between these two distributions is the following.
%
\begin{lemma}\label{lemma_useful}
The distribution of $T^\alpha_B((0,0),(\sigma(K_{n+1}-1),y'))$ under $\Prob'_{B,M,y'}$ is stochastically dominated by the distribution of $T^\alpha_B((0,0),(\sigma(K_{n+1}-1),y'))+M$ under $\Prob_{B,M,y'}$.
\end{lemma}
\begin{proof}
Let $X=(X_{i,j}:\,\,i\in B,j=0,\ldots,y')$ be a family of i.i.d. random variables whose distribution is the distribution of  $Y_{i,j}$ conditioned on $Y_{i,j}\leq rM$. Pick a uniformly distributed $(i_0,j_0)$ in $B\times\{0,\ldots,y'\}$; then give value $rM$ to $Y'_{i_0,j_0}$, and let the other $Y'_{i,j}$ for $(i,j)\neq (i_0,j_0)$ be independent with the same distribution as the above $X_{i,j}$. Then the family $Y=(Y'_{i,j}:\,(i,j)\in B\times\{0,\ldots,y'\})$ has distribution $\Prob'_{B,M,y'}$. It suffices to now to show that
\be\label{suffices}
T^\alpha_B((0,0),(\sigma(K_{n+1}-1),y'))[Y']\leq T^\alpha_B((0,0),(\sigma(K_{n+1}-1),y'))[X]+M,
\ee
where the notation $T^\alpha_B((0,0),(\sigma(K_{n+1}-1),y'))[X]$ denotes the passage time as a function of $X$, that is \eqref{def_passage}--\eqref{def_last} with the r.v.'s $Y_{i,j}$ replaced by $X_{i,j}$.
Let $\gamma^*=(x_k,y_k)_{k=0,\ldots,n}$ denote the optimal path that achieves $T^\alpha_B((0,0),(\sigma(K_{n+1}-1),y'))[Y']$, i.e. such that
\be
\label{optimal_path}
T^\alpha_B((0,0),(\sigma(K_{n+1}-1),y'))[Y']=T^\alpha_B(\gamma^*)[Y']:=\sum_{k=0}^n 
\frac{Y'_{x_k,y_k}}{\alpha(x_k)}.
\ee
Then
\be\label{compare_times_2}
T^\alpha_B(\gamma^*)[Y']\leq T^\alpha_B(\gamma^*)[X]+M\leq T^\alpha_B((0,0),(\sigma(K_{n+1}-1),y'))[X]+M.
\ee
%
%
Indeed, if $(i_0,j_0)$ does not lie on $\gamma^*$, the first inequality in \eqref{compare_times_2} is an equality; otherwise, the inequality holds because 
$$\frac{Y'_{i_0,j_0}}{\alpha(i_0)}\leq M\leq M+\frac{X_{i_0,j_0}}{\alpha(i_0)}.$$
This establishes \eqref{suffices}.
\end{proof}
}
%
%
%

\bigskip

\noindent
{\bf Step 2.} {\it Fluctuation and entropy bounds.}  
Given $\alpha$, the random variables $\{ U^\alpha_{B,l'}(\sigma,\tilde{\gamma}),$ $V^\alpha_{B,l}(\sigma,\tilde{\gamma}) \}_{l,l'}$ (defined in \eqref{eq: def U V}) are independent under $\Prob_{B,M,y'}$, because they depend on disjoint subvectors of $Y$ (see \textcolor{black}{F}igure \ref{fig: independence}). 
On the other hand, by Lemma \ref{lemma_concentration}, we get
\begin{eqnarray}
\label{concentration_h}
\begin{cases}
U^\alpha_{B,l}(\sigma,\tilde{\gamma}) 
=
\Exp_M\left[
U^\alpha_{B,l}(\sigma,\tilde{\gamma})
\right]+8M\sqrt{\sigma K_n+2\tilde{y}_l} \, Z_l^{(1)},
\\
V^\alpha_{B,l}(\sigma,\tilde{\gamma}) 
=
\Exp_M\left[ V^\alpha_{B,l}(\sigma,\tilde{\gamma})\right]+8M\sqrt{2\tilde{z}_l} \, Z_l^{(2)},
\end{cases}
\end{eqnarray}
where $(Z_l^{(i)})_{l=1,\ldots,l_n;i=1,2}$ is a family of r.v.'s independent under $\Prob_{B,M,y'}$ and such that
\begin{equation}
\label{eq: one sided}
\Prob_{B,M,y'}\left( Z_l^{(i)}\geq t \right)\leq \exp( -t^2) ,
\end{equation}
for all $t\geq 0$. To apply Lemma \ref{cor_max} to the random variables in \eqref{concentration_h},
 we take 
$\mathcal A=\tilde{\Gamma}_n \big((0,0),(\sigma(K_{n+1}-1),y') \big)$ with $\mathcal I=\{1,\ldots,2l_n\}$,
and for $a=\tilde{\gamma}\in \mathcal A$, we set 
$$
l\in\{1,\ldots,l_n\},  \qquad {\mathcal Y}_{a,2l-1}=U^\alpha_{B,l}(\sigma,\tilde{\gamma})
\quad \text{and} \quad 
{\mathcal Y}_{a,2l}=V^\alpha_{B,l}(\sigma,\tilde{\gamma}).
$$ 
Thus in \eqref{exp_max} we have $|\mathcal I|=2l_n=2K_{n+1}/K_n$, and (cf. \eqref{concentration_h} and \eqref{total_height})
$$
\sum_{i\in\mathcal I}V_{a,i}=64M^2 (\sigma K_{n+1}+2y')\leq 64M^2 K_{n+1}(\sigma+2y).
$$
To estimate the cardinality $|\mathcal A|$ of the skeletons, we need the following 
\begin{lemma}
\label{count_skeleton}
For every $y'\in\N$, one has
$$
\log | \mathcal A| = 
\log \big |  \tilde{\Gamma}_n  \big( (0,0),(\pm(K_{n+1}-1),y')  \big)  \big |
\leq
2\frac{K_{n+1}}{K_n} \, \left[ 1+\log \left( 1+{K_n y} \right) \right].
$$
\end{lemma}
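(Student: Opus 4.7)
The plan is to count skeletons as ordered nonnegative integer decompositions of the total height $y'$ and apply a standard binomial bound.

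First I would observe that, by the very definition of the skeleton in \eqref{skeleton_y}--\eqref{skeleton_z} together with the constraint \eqref{total_height}, the set $\tilde{\Gamma}_n((0,0),(\sigma(K_{n+1}-1),y'))$ is in bijection with the set of $2l_n$-tuples $(\tilde y_1,\tilde z_1,\ldots,\tilde y_{l_n},\tilde z_{l_n})\in\mathbb{N}^{2l_n}$ satisfying $\sum_i(\tilde y_i+\tilde z_i)=y'$ (in the case $\sigma=-1$, additionally $\tilde y_i\geq K_n$, but this only removes elements and so gives a smaller cardinality; in fact one can shift $\tilde y_i\mapsto \tilde y_i-K_n$ and obtain decompositions of $y'-K_{n+1}\leq y'$, so the bound we derive for $\sigma=+1$ applies a fortiori). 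Hence
\[
|\mathcal A|\leq \binom{y'+2l_n-1}{2l_n-1}\leq \binom{y'+2l_n}{2l_n}.
\]

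Next I would use the elementary inequality $\binom{m+k}{k}\leq \left(\frac{e(m+k)}{k}\right)^{k}$ with $m=y'$ and $k=2l_n$, which yields
\[
\log|\mathcal A|\leq 2l_n\left[1+\log\!\left(1+\frac{y'}{2l_n}\right)\right].
\]
Since $K_{n+1}=l_n K_n$, we have $2l_n=2K_{n+1}/K_n$, and $y'=\lfloor K_{n+1}y\rfloor\leq K_{n+1}y$ gives $y'/(2l_n)\leq K_n y/2\leq K_n y$. The logarithm being monotone, one concludes
\[
\log|\mathcal A|\leq 2\frac{K_{n+1}}{K_n}\bigl[1+\log(1+K_n y)\bigr],
\]
as claimed.

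The only mild subtlety is the $\sigma=-1$ case, where the lower-bound constraint $\tilde y_i\geq K_n$ must be handled; the shift argument above reduces it to the free case with $y'$ replaced by $y'-K_{n+1}$, which if nonnegative produces fewer decompositions (and if negative produces the empty set). No genuine obstacle arises; the statement is a purely combinatorial counting bound, and the main (trivial) point is simply to pick the right binomial inequality so that the resulting factors match the target expression $2(K_{n+1}/K_n)[1+\log(1+K_ny)]$.
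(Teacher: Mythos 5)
Your proof is correct and follows essentially the same route as the paper: both count skeletons by stars-and-bars as $\binom{y'+2l_n-1}{2l_n-1}$ (with the $\sigma=-1$ case subsumed since the extra constraint only removes tuples) and then apply a standard binomial upper bound. The only cosmetic difference is that the paper passes through the entropy inequality $\log\binom{N}{k}\leq N h(k/N)$ together with $u\,h(1/u)\leq 1+\log u$, whereas you use the equivalent $\binom{n}{k}\leq (en/k)^k$ directly; both yield the bound $2l_n\,[1+\log(1+K_ny)]$.
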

\begin{proof}
The number of such skeletons satisfies the inequality
\be
\label{skeleton_number}
\sigma\in\{-1,1\}, \qquad 
\Big| \tilde{\Gamma}_n \big( (0,0),(\sigma(K_{n+1}-1),y')  \big) \Big|
\leq {2l_n+y'-1 \choose 2l_n-1}.
\ee
The previous upper  bound follows by noticing that choosing a skeleton amounts to choosing
$2l_n-1$ heights corresponding to the different renewal times to reach the total height $y'$. In fact, 
when $\sigma=1$, some of these heights can be equal if $\tilde y_i =0$ or $\tilde z_i =0$ for some $i 
\leq 2l_n-1$. Thus, the number of ways \textcolor{black}{for} choosing the heights is bounded by the number of ways \textcolor{black}{for} choosing $2l_n-1$
items from a set of $2l_n+y'-1$ items.
Estimate \eqref{skeleton_number} is actually an equality if $\sigma=1$.
Recall the inequality
\be
\label{binomial_bound}
\log \left( \ba{l}
N\\
k
\ea
\right)\leq N h\left(
\frac{k}{N}
\right),
\ee
where $h$ is defined on $[0,1]$ by
\be\label{def_h}
-h(x):=x \log x+(1-x) \log (1-x)
{\color{black}
\quad \text{with} \quad h(0) = h(1) = 0.}
\ee
\textcolor{black}{
Bound \eqref{binomial_bound} follows from Cramer's exact large deviation uppper bound. For completeness,
we give a derivation of \eqref{binomial_bound} at the end of this proof.\\ \\
}
Furthermore,
$$
u h(1/u) \leq 1+ \log u
$$
for $u\geq 1$, and 
$$
\frac{2l_n+y'-1}{2l_n-1}\leq 1+\frac{y'}{l_n}
= 1 + \frac{K_{n+1}}{l_n} y,
$$
(the inequality follows from $l_n\geq 1$). This completes the  proof of Lemma \ref{count_skeleton}.\\ \\
\textcolor{black}{
{\it Proof of \eqref{binomial_bound}.}
The bound follows from
the inequality
\be\label{pre_cramer}
\left(
\ba{l}
N\\
k
\ea
\right)
 \leq  2^N\Prob\left( \sum_{i=1}^N \zeta_i \geq k
\right),
\ee
where $(\zeta_i)_{i=1,\ldots,n}$ is a sum of i.i.d. Bernoulli variables with parameter $1/2$,
and Cramer's exact large deviation upper bound applied to the r.h.s. of \eqref{pre_cramer}. For completeness
we recall the derivation of Cramer's bound:
for $k\geq N/2$ and $\theta\in\R$, by Markov's inequality,
\be\label{cramer}
\Prob\left( \sum_{i=1}^N \zeta_i \geq k
\right)
\leq  e^{-N\theta\frac{k}{N}}
\Exp\left( 
e^{\theta\sum_{i=1}^N \zeta_i}
\right)=
e^{
-N\left(
\theta\frac{k}{N}-L(\theta)
\right)
},
\ee
where
$$
L(\theta):=\log \Exp e^{\theta\zeta_1}=\log \frac{1+e^\theta}{2}.
$$
The Legendre transform of $L$ is $h+\log 2$, where $h$ is given by \eqref{def_h}.
Optimizing the upper bound in \eqref{cramer} over $\theta$ yields
$$
2^N\Prob\left( \sum_{i=1}^N \zeta_i \geq k
\right)\leq 2^N\exp\left\{
-N\sup_{\theta\in\R}\left[
\theta\frac{k}{N}-L(\theta)
\right]\right\}
=e^{-Nh\left(\frac{k}{N}\right)}.
$$
}
\end{proof}
\medskip
Combining \eqref{exp_max} with the entropy estimate of Lemma \ref{count_skeleton}, we obtain 
\begin{align}
\label{eq:fluct:3}
&\Exp_M \left( \max_{\tilde{\gamma}\in\tilde{\Gamma}_n((0,0),(\sigma(K_{n+1}-1),[K_{n+1}y]))}
\Big\{
U^\alpha_{B}(\sigma,\tilde{\gamma})
+V^\alpha_{B}(\sigma,\tilde{\gamma})
\Big\} \right)\\
& \quad \leq  \max_{\tilde{\gamma}\in\tilde{\Gamma}_n((0,0),(\sigma(K_{n+1}-1),[K_{n+1}y]))}
\Big\{
\Exp_M\left(
U^\alpha_{B}(\sigma,\tilde{\gamma})
+V^\alpha_{B}(\sigma,\tilde{\gamma})
\right)
\Big\}
\nonumber\\
%
%
%
& \quad + 8M\sqrt{K_{n+1}}\sqrt{\sigma+2y}
\sqrt{ 2\frac{K_{n+1}}{K_n} }
\left(
\sqrt{\pi}
+\sqrt{A}+
\sqrt{A}
\sqrt{ \left[ 1+\ln\left( 1+{K_n y} \right) \right] }
\right)\nonumber,
%
%
%
\end{align}
where we used that $2\frac{K_{n+1}}{K_n}  \geq \pi$.

\medskip
\noindent
{\bf Step 3.} {\it  Removing the cut-off on $Y$.}
The random variables  $\{ U^\alpha_{B,l}(\sigma,\tilde{\gamma}), \; V^\alpha_{B,l}(\sigma,\tilde{\gamma}) \}_{l,l'}$ are nondecreasing functions of $Y=(Y_{i,j}:\,(i,j)\in\Z\times\N)$ with respect to the product order.
Therefore, their distributions under $\Prob_{B,M,y'}$ are stochastically dominated by their distributions under $\Prob$ and one has 
%
\be
\label{eq: comparaison E et EM}
\Exp \left[U^\alpha_{B,l}(\sigma,\tilde{\gamma}) \right]
\geq 
\Exp_M \left[U^\alpha_{B,l}(\sigma,\tilde{\gamma}) \right],
\qquad 
\Exp \left[V^\alpha_{B,l}(\sigma,\tilde{\gamma})\right]
\geq 
\Exp_M \left[V^\alpha_{B,l}(\sigma,\tilde{\gamma})\right]  .
\ee
On the other hand, 
%
\textcolor{black}{Lemma \ref{lemma_useful}} combined with \eqref{eq: comparaison E et EM} and \eqref{eq:fluct:3} yields 
\begin{align}
\label{eq:fluct:3bis}
& \Exp'_M \left( \max_{\tilde{\gamma}\in\tilde{\Gamma}_n((0,0),(\sigma(K_{n+1}-1),[K_{n+1}y]))}
\Big\{ U^\alpha_{B}(\sigma,\tilde{\gamma}) +V^\alpha_{B}(\sigma,\tilde{\gamma}) \Big\}
\right)\\
& \qquad \leq  \max_{\tilde{\gamma}\in\tilde{\Gamma}_n((0,0),(\sigma(K_{n+1}-1),[K_{n+1}y]))}
\Big\{
\Exp\left[
U^\alpha_{B}(\sigma,\tilde{\gamma})
+V^\alpha_{B}(\sigma,\tilde{\gamma})
\right]  \Big\}+M\nonumber\\
%
%
%
& \qquad +  8M K_{n+1}\sqrt{\sigma+2y} \sqrt{ \frac{2}{K_n} } 
\left(
\sqrt{\pi}
+\sqrt{A}+
\sqrt{A}
\sqrt{
 \left[
1+\log \left(
1+{K_n y}
\right)
\right]
}
\right).\nonumber
\end{align}
%
Recall that $\Exp'_M$ on the left-hand site of \eqref{eq:fluct:3bis} stands for the expectation with respect to $\Prob$ conditioned on the maximum $M_B(y)=M$.
We can now remove this conditioning by integrating both sides of \eqref{eq:fluct:3bis} with respect to the law of $M_B(y)$. We first write
\be\label{def_expmax}
\Exp \big[ M_B(y) \big]=m\left([yK_{n+1}]K_{n+1}\right),
\ee
where the function $t\in[0,+\infty)\mapsto m(t)$ is defined as the expectation of the maximum of $1+[t]$ i.i.d. exponential variables of rate 1. In particular, we have
\be\label{bound_expmax}
m(t)\leq C[1+\log(1+t)],
\ee
for some constant $C>0$.
Thus, after conditioning on $M_B(y)$, we obtain
%
%
%
%
\begin{align}
\label{after_conditioning}
& \frac{1}{K_{n+1}}
\Exp \left( \max_{\tilde{\gamma}\in\tilde{\Gamma}_n((0,0),(\sigma(K_{n+1}-1), [ K_{n+1}y ]))}
\Big\{
U^\alpha_{B}(\sigma,\tilde{\gamma})
+V^\alpha_{B}(\sigma,\tilde{\gamma})
\Big\} \right) \\
& \quad \leq  \frac{1}{K_{n+1}}
\max_{\tilde{\gamma}\in\tilde{\Gamma}_n((0,0),(\sigma(K_{n+1}-1),[K_{n+1}y]))} 
\Big\{ 
\Exp\left( 
U^\alpha_{B}(\sigma,\tilde{\gamma})
+V^\alpha_{B}(\sigma,\tilde{\gamma})
\right) \Big\}
\nonumber\\
%
%
%
& \qquad \qquad 
+m\left([yK_{n+1}]K_{n+1}\right)\Delta_n(y) , \nonumber
\end{align}
where 
\begin{eqnarray*}
\tilde \Delta_n(y) := \frac{1}{K_{n+1}}
+
8 \frac{ \sqrt{\sigma+2y}}{\sqrt{K_{n}}} 
\left(
\sqrt{A}\sqrt{\pi}
+ \sqrt{2\pi}
+  \sqrt{A} \sqrt{ 1+\log \left( 1+{K_n y} \right) }
\right) .
\end{eqnarray*}
A simple computation shows that 
$$
m \big([yK_{n+1}]K_{n+1}\big) \; \tilde \Delta_n(y)
\leq
\delta_n\sqrt{\sigma/2+y} \, \big[1+\log (1+y) \big]^{3/2} ,
$$
%
with $\delta_n$ given by \eqref{eq: delta n}.
Using the notation of \eqref{fluct_part}, we get 
$$
\mathcal F_n (y) \leq
\delta_n\sqrt{\frac{\sigma}{2} +y} \;  [1+\log (1+y)]^{3/2} .
$$ 
This completes the proof of Proposition \ref{prop:fluct}.
\qed

\section{ Completion of proofs of Theorems \ref{th_plateau} and \ref{th_dilute_limit}}
\label{sec:completion}

In this section, we complete the remaining parts in the proof of Theorem \ref{th_plateau}.
In Subsection \ref{unrestricted}, we deduce from Proposition \ref{prop_block} a similar statement for unrestricted 
passage times (that is, when the paths are not restricted to the box defined by the endpoints). Finally, 
Theorem \ref{th_plateau} is completed, in Subsection \ref{conclude}, using the fact that most boxes are good.
The dilute limit (Theorem \ref{th_dilute_limit}) is studied in Section \ref{sec:proof_dilute}.

\subsection{Bounds on unrestricted passage times}
\label{unrestricted}

To obtain Theorem \ref{th_plateau} from Proposition \ref{prop_block}, we first
deduce from Proposition \ref{prop_block} the following result for unrestricted passage times, i.e. passage times obtained by maximizing over paths not bound to stay
in the interval between the two endpoints (see \textcolor{black}{F}igure \ref{fig: unrestricted}).
\begin{figure}[htpb]
\centering
\includegraphics[width=.7\columnwidth]{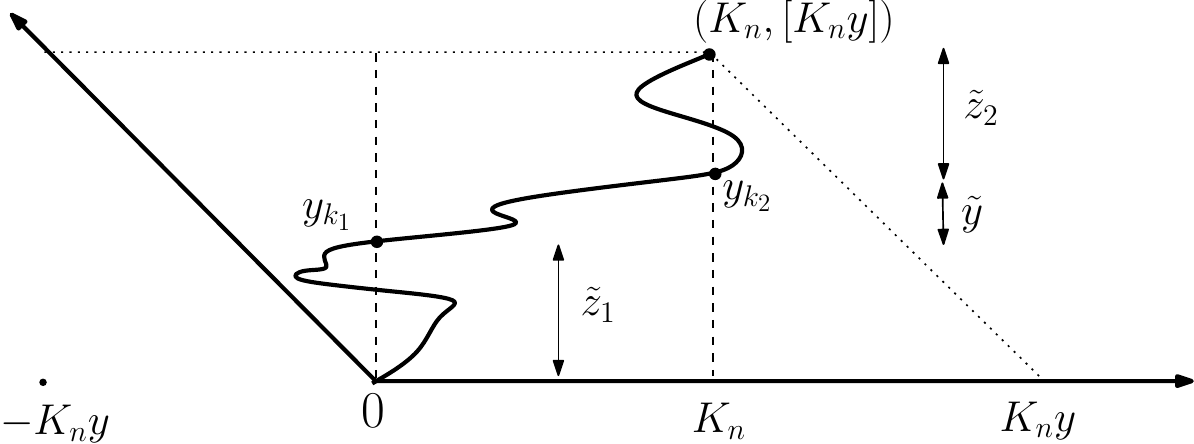}
\caption{\small The optimal path is not restricted to the box $[0,K_n] \times [0,K_n y]$ but can wander around the whole parallelogram marked by the dotted line. The optimal path is split into 3 parts 
$(0,y_{k_1})$, $(y_{k_1},y_{k_2})$ and $(y_{k_2}, (K_n,  \lfloor K_n y\rfloor))$.
 } 
\label{fig: unrestricted} 
\end{figure}

\margincom{\textcolor{black}{c'est un peu genant d'avoir $\sigma$ en indice par intermittence ?}\textcolor{black}{c'etait $\rho_n$ en fait, j'ai corrige la suite}}
Given the sequence {$(\rho_n)_{n \geq 1}$ of Proposition \ref{prop: estimation suites}, we set 
\begin{equation}
\label{eq: rhoc}
\overline{\rho}_c=\overline{\rho}_c(\varepsilon) : = \limsup_{n \to \infty} \rho_n\in[0,1/2).
\end{equation}
{\color{black} In the rest of this section, the asymptotics $\lim_{n\to+\infty}$ means that 
we restrict to a subsequence (fixed once and for all) of $(\rho_n)_{n\geq 1}$ that achieves the $\limsup$ in \eqref{eq: rhoc}.}
\begin{corollary}
\label{cor_block}
For $\sigma = \pm 1$ and $y\geq \sigma^-$, we consider the unrestricted passage time
\be\label{def_unrestricted}
{\tau}_n^\alpha(\sigma,y):=\Exp \left[ \frac{1}{K_n}T^\alpha(\sigma K_n,[K_ny]) \right].
\ee
Then there are functions $e_n(\sigma,y)$  such that, for all $n\in\N^*$ and environments $\alpha$ for which $[0,\sigma(K_n-1)]$ is a good block, the following bound holds:
\be
\label{limit_time}
{\tau}_n^\alpha(\sigma,y)\leq\tau^{\overline{\rho}_c,r/4}(\sigma,y)+e_n(\sigma,y).
\ee
{\color{black} Furthermore $e_n(\sigma,.)$ does not depend on $\alpha$ and
converges locally uniformly to 0 on $[\sigma^-,+\infty)$ as $n\to+\infty$.}
\end{corollary}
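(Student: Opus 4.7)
The plan is to decompose any unrestricted path from $(0, 0)$ to $(\sigma K_n, \lfloor K_n y\rfloor)$ into three consecutive pieces separated by the last entry to and first exit from the block $B = [0, \sigma(K_n - 1)] \cap \Z$, apply Proposition \ref{prop_block} to the middle piece (which stays inside $B$), and use a crude homogeneous rate $r$ estimate for the two side excursions. Take $\sigma = 1$ for concreteness; the case $\sigma = -1$ is handled symmetrically. For a path from $(0, 0)$ to $(K_n, \lfloor K_n y\rfloor)$ in $\mathcal W$, let $h_1$ be the largest height at which the path visits the column $x = 0$ and $h_2$ the smallest height above $h_1$ at which it visits $x = K_n - 1$. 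Cutting at these vertices yields an initial excursion from $(0, 0)$ to $(0, h_1)$ (possibly wandering into $x<0$), a middle segment from $(0, h_1)$ to $(K_n - 1, h_2)$ restricted to $B$, and a terminal excursion from $(K_n - 1, h_2)$ to $(K_n, \lfloor K_n y\rfloor)$. Arranging the three pieces to use disjoint horizontal strips of service times (as in Section \ref{subsec:coarse}, Figure \ref{fig: independence}, up to $O(1)$ boundary corrections),
\begin{equation*}
T^\alpha((0, 0), (K_n, \lfloor K_n y\rfloor)) = \max_{0 \leq h_1 \leq h_2 \leq \lfloor K_n y\rfloor} \Big\{ T^\alpha_L(h_1) + T^\alpha_B((0, h_1), (K_n - 1, h_2)) + T^\alpha_R(h_2) \Big\}.
\end{equation*}

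For the middle piece, Proposition \ref{prop_block}(i) applied to the good block $B$ yields $\Exp[T^\alpha_B] \leq (K_n \rho_n + h_2 - h_1)/J_n$. For the two side pieces, the pointwise bound $T^\alpha \leq (1/r) T^1$ (valid because $\alpha \geq r$) combined with the shape estimate $\Exp[T^1((0, 0), (x, y))] \leq (\sqrt{x + y} + \sqrt{y})^2$, which follows from superadditivity of $\Exp[T^1]/N$ together with \eqref{eq: homogeneous LPP}, gives $\Exp[T^\alpha_L(h_1)] \leq h_1/(r/4)$ and $\Exp[T^\alpha_R(h_2)] \leq (\lfloor K_n y\rfloor - h_2)/(r/4) + O(\sqrt{K_n y}/r)$. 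The resulting affine sum
\begin{equation*}
F(h_1, h_2) := \frac{h_1}{r/4} + \frac{K_n \rho_n + h_2 - h_1}{J_n} + \frac{\lfloor K_n y\rfloor - h_2}{r/4}
\end{equation*}
has slope $(r/4)^{-1} - J_n^{-1} \geq 0$ in $h_1$ and slope $J_n^{-1} - (r/4)^{-1} \leq 0$ in $h_2$ because $J_n \geq r/4$ by Proposition \ref{prop_block}(ii); its supremum over the triangle $\{h_1 \leq h_2\}$ is therefore attained on the diagonal and equals $K_n y/(r/4) + K_n \rho_n/J_n$.

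To convert $\max\Exp$ into $\Exp[\max]$, invoke the Gaussian-concentration-plus-entropy machinery of Section \ref{fluct_ent}: after the cutoff on the service times, Lemmas \ref{lemma_concentration} and \ref{cor_max} applied to the three independent pieces (for each fixed $(h_1, h_2)$) contribute a correction of order $\sqrt{K_n(1 + y) \log K_n}$ coming from the entropy of the $O(K_n^2)$ choices of $(h_1, h_2)$. Dividing by $K_n$,
\begin{equation*}
\tau_n^\alpha(1, y) \leq \frac{y}{r/4} + \frac{\rho_n}{J_n} + O\left(\sqrt{\frac{y \log K_n}{K_n}}\right).
\end{equation*}
Along the subsequence fixed after \eqref{eq: rhoc}, $\rho_n \to \overline{\rho}_c$ and $J_n \to r/4$ by Proposition \ref{prop_block}(ii), so the right-hand side converges locally uniformly in $y$ to $\tau^{\overline{\rho}_c, r/4}(1, y)$. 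Collecting the fluctuation correction and the gap $|\rho_n/J_n - \overline{\rho}_c/(r/4)|$ into $e_n(1, y)$ gives \eqref{limit_time}; $e_n$ is deterministic (independent of $\alpha$) because all bounds above depend on $\alpha$ only through the event that $B$ is good.

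The main obstacle is organising the decomposition so that the three pieces are independent conditionally on $\alpha$, which is needed for Lemmas \ref{lemma_concentration} and \ref{cor_max} to apply cleanly. This forces the careful horizontal-strip partitioning sketched above and absorbs the boundary vertices into the $O(1)$ overlap term; it also requires verifying that the side excursions, which traverse uncontrolled environment outside the good block, contribute at most the same rate $1/(r/4)$ that appears in the target — this is where the elementary bound $\alpha \geq r$ suffices. The case $\sigma = -1$ is symmetric, with $(\rho_n - 1)/J_n$ replacing $\rho_n/J_n$ in the final bound and the target being $\tau^{\overline{\rho}_c, r/4}(-1, y) = (y - 1 + \overline{\rho}_c)/(r/4)$, the inequality relying on $\rho_n^{-1} \leq \rho_n$ to handle the offset.
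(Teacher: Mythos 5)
Your proof is correct and follows essentially the same strategy as the paper: a three-piece cut of the unrestricted path into a left excursion, a middle segment confined to $B$ (to which Proposition \ref{prop_block} applies), and a right excursion, followed by a crude $\alpha\geq r$ bound on the excursions, a mean-optimization over the two cut heights, and the cutoff-plus-concentration-plus-entropy machinery (the paper's Proposition \ref{prop:fluct_2}) to pass from $\max\Exp$ to $\Exp\max$. The only differences are cosmetic: you cut at the last visit to $x=0$ and the first subsequent visit to $x=K_n-1$, whereas the paper cuts at the last exit from $x<0$ and the first arrival at $x=K_n$, and the paper shifts the middle segment's start to $(\sigma,\tilde{z}_1)$ so the three pieces use strictly disjoint sets of service times with no one-vertex overlap — you acknowledge this via your ``$O(1)$ boundary corrections'' remark, which is fine. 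Your stated fluctuation error $O(\sqrt{y\log K_n/K_n})$ is slightly optimistic in the logarithm (the paper's $e_n^{(2)}$ carries a $(\log K_n)^{3/2}$ factor from the expected maximum of the exponentials times the entropy), but this does not affect the local-uniform convergence to zero.
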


\begin{proof}[Proof of Corollary \ref{cor_block}]
The unrestricted passage time $T^\alpha(\sigma K_n,\lfloor K_n y\rfloor)$ 
may use paths that do not stay in $B := [0,\sigma K_n]$. 
To control the contribution outside $B$, we use a decomposition of the path in the same spirit as Section \ref{subsec:outline}. The problem here is simpler because there is no more
renormalization, and there are only three regions to consider for the path according to its $x$-coordinate (recall the simplifying notational convention $[a,b]=[b,a]$), 
namely the interval 
$[0,\sigma (K_n-1)]$ and the two intervals on either side of it, which are also bounded by the fact that the only  possible increments are $(1,0)$ and $(-1,1)$.
If $\sigma=1$, these intervals are $[-\lfloor K_n y\rfloor ,-1]$ and $[K_n, K_n + \lfloor K_n y\rfloor ]$. 
If $\sigma=-1$ then $y \geq 1$ and these intervals are 
$[-  \lfloor K_n y\rfloor  ,-K_n]$ and $[1,-K_n+ \lfloor K_n y\rfloor ]$.
We thus define a simpler path skeleton 
$(\tilde{z}_1,\tilde{y},\tilde{z}_2)=\tilde{\gamma}$ as described below. 

\medskip

Let $\gamma=(x_k,y_k)_{k=0,\ldots,m-1}$ be a path connecting
$(0,0)=(x_0,y_0)$ to $(x_{m-1},y_{m-1})=(\sigma K_n,[K_ny]=y')$. We set
\begin{eqnarray*}
& k_1 :=  1+
\max\{k=0,\ldots,m-1:\,\sigma x_k<0\},\\
& k_2 :=  \min\{k= k_1, \ldots,  m-1:\,x_k=\sigma K_n\},
\end{eqnarray*}
with the convention that the $\max$ is $-1$ if the corresponding set is empty. Since allowed  path increments are 
$(1,0)$ and $(-1,1)$, we have $x_{k_1}=0$. We then define (see \textcolor{black}{F}igure \ref{fig: unrestricted})
\begin{eqnarray*}
\tilde{z}_1 :=  y_{k_1}, \qquad
\tilde{y}  :=  y_{k_2}-y_{k_1}, \qquad 
\tilde{z}_2 :=  y'-y_{k_2}.
\end{eqnarray*}
Let $\tilde{\Gamma}_n$  denote the set of these new ``skeletons'', that is the set of
triples $\tilde{\gamma}=(\tilde{z}_1,\tilde{y},\tilde{z}_2)$ such that 
\be\label{def_new_skeleton}
\tilde{z}_1+\tilde{y}+\tilde{z}_2=\lfloor K_n y\rfloor =:y',\quad
(\tilde{z}_1,\tilde{y},\tilde{z}_2)\in\N\times(\N\cap[\sigma^-K_n,+\infty)) \times \N.
\ee
The path between $k_1$ and $k_2$ corresponds to the restricted part which has already been studied in the previous sections. 
As in \eqref{double_max}, we write
\begin{eqnarray}
T^\alpha \big( (0,0), (K_n,\lfloor K_n y\rfloor ) \big)  = 
\max_{(\tilde{z}_1,\tilde{y},\tilde{z}_2)\in\tilde{\Gamma}_n }\left[
V^\alpha_1 (\sigma,\tilde{\gamma})+U^\alpha_B(\sigma,\tilde{\gamma})+V^\alpha_2(\sigma,\tilde{\gamma})
\right],
\label{cut_in_three}
\end{eqnarray}
where $B:=[0, \sigma K_n-1]$, and
\begin{eqnarray*}
V^\alpha_1(\sigma,\tilde{\gamma}) & := & T^\alpha((0,0),(0,\tilde{z}_1)),\\
U^\alpha_B(\sigma,\tilde{\gamma}) & := & T^\alpha_B\left(\left(\sigma,\tilde{z}_1+\frac{1-\sigma}{2}\right),\left(\sigma(K_n-1),\tilde{z}_1+\tilde{y}-\frac{1-\sigma}{2}\right)\right),\\
V^\alpha_2(\sigma,\tilde{\gamma}) & := & T^\alpha((\sigma K_n,\tilde{z}_1+\tilde{y}),(\sigma K_n,\tilde{z}_1+\tilde{y}+\tilde{z}_2)).
\end{eqnarray*}
Note that the second passage time in \eqref{cut_in_three} is restricted to $B$ by definition of the skeleton.
We then proceed as in Section \ref{subsec:outline} by studying the mean optimization problem (that is the maximum of the expectations of the three terms in \eqref{cut_in_three})
and estimating the error due to  this approximation.

\medskip

\textcolor{black}{Using  \eqref{def_mintime} with $B=\Z$, and \eqref{eq: homogeneous LPP} with $(x,y)=(0,1)$}, 
%
\begin{eqnarray*}
\Exp \Big( V^\alpha_1(\sigma,\tilde{\gamma}) \Big) & \leq & \frac{4}{r}\tilde{z}_1,
\qquad \Exp \Big(  V^\alpha_2(\sigma,\tilde{\gamma}) \Big)  \leq  \frac{4}{r}\tilde{z}_2
\end{eqnarray*}
\textcolor{black}{On the other hand, by definition of restricted passage times $\tau^\alpha_B$},
\begin{eqnarray*}
\Exp \Big(  U^\alpha_B(\sigma,\tilde{\gamma}) \Big)  & \leq  & K_n\tau^\alpha_B(\sigma,K_{n}^{-1}\tilde{y})\leq K_n\tau^{\rho_n,J_n}(\sigma,K_n^{-1}\tilde{y}),
\end{eqnarray*}
where the last inequality follows from Propositions \ref{prop:induction_gen} and \ref{prop:estimation_gen}.
From the definition  \eqref{ref_time} of $\tau^{\rho,J}$, we get for $i = 1,2$
$$
\frac{4}{r}\tilde{z}_i = \tau^{\overline{\rho}_c,r/4}(0,\tilde{z}_i)
\quad \text{and} \quad
\frac{1}{J_n}\tilde{z}_i = \tau^{\rho_n,J_n} (0,\tilde{z}_i).
$$
Thus we deduce that 
\begin{align}
& \frac{1}{K_n}
\left(
\frac{4}{r}\tilde{z}_1+\frac{4}{r}\tilde{z}_2+K_n\tau^{\rho_n,J_n}(\sigma,K_n^{-1}\tilde{y})
\right)
 \leq  \tau^{\overline{\rho}_c,r/4}(\sigma,y)\nonumber\\
& \qquad \qquad  \qquad \qquad  \qquad 
+  \tau^{\rho_n ,J_n}(\sigma,y)-\tau^{\overline{\rho}_c,r/4}(\sigma,y)
 +  \left|
\frac{4}{r} -  \frac{1}{J_n}
\right|y,
\label{upper_bound_three}
\end{align}
where we used that $ \tilde{z}_1 + \tilde{z}_2 \leq K_n y$.
Define $e_n^{(1)}(\sigma,y)$ as the second line
of the r.h.s. of \eqref{upper_bound_three}. We have thus shown that
$$
\frac{1}{K_n} \max_{(\tilde{z}_1,\tilde{y},\tilde{z}_2)\in\tilde{\Gamma}_n }
\Big\{ 
\Exp \big( V^\alpha_1(\sigma,\tilde{\gamma}) \big) +
\Exp  \big(  U^\alpha_B(\sigma,\tilde{\gamma}) \big) +
\Exp  \big( V^\alpha_2(\sigma,\tilde{\gamma})  \big)
\Big\}
\leq \tau^{\overline{\rho}_c,r/4}( \sigma,y)+e_n^{(1)}(\sigma,y),
$$
where by the definition of $\tau^{\rho,J}$ \eqref{ref_time}, of $\overline{\rho}_c$ \eqref{eq: rhoc} and the convergence of $J_n$ to $r/4$, we deduce the  (locally uniform) convergence 
\margincom{{\color{black} mettre $e_n^{(1)}$ dans l'enonce ?}\textcolor{black}{pourquoi ?}}
$$
\lim_{n \to \infty} e_n^{(1)}(\sigma,.)  = 0.
$$
We conclude by controlling the error thanks to Proposition \ref{prop:fluct_2}, in the same spirit as Proposition \ref{prop:fluct}. 
\end{proof}
\begin{proposition}
\label{prop:fluct_2}
For  $\sigma\in\{-1,1\}$, there exist functions $e_n^{(2)}(\sigma,y)$ such that
\begin{align}
&  \Exp \left( \frac{1}{K_n} \; \max_{(\tilde{z}_1,\tilde{y},\tilde{z}_2)\in\tilde{\Gamma}_n }
\Big\{
V^\alpha_1(\sigma,\tilde{\gamma})
+U^\alpha_B(\sigma,\tilde{\gamma})
+V^\alpha_2(\sigma,\tilde{\gamma})
\Big\}
\right)
 \nonumber\\
& \qquad  -  \max_{(\tilde{z}_1,\tilde{y},\tilde{z}_2) \in \tilde{\Gamma}_n }
\Big\{ \frac{1}{K_n}
\Big[
\Exp \big( V^\alpha_1(\sigma,\tilde{\gamma}) \big)+
\Exp  \big(  U^\alpha_B(\sigma,\tilde{\gamma}) \big)+
\Exp  \big(  V^\alpha_2(\sigma,\tilde{\gamma})  \big)
\Big]  \Big\}
\leq  e_n^{(2)}(\sigma,y),
\label{fluct_2}
\end{align} 
and  $e_n^{(2)}(\sigma,.)$ converges locally uniformly to $0$ on $[\sigma^-,+\infty)$ as $n\to+\infty$.
\end{proposition}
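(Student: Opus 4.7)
The plan is to run essentially the same three-step argument (cutoff, concentration with entropy, removal of cutoff) used for Proposition \ref{prop:fluct} in Section \ref{fluct_ent}, but applied to the much simpler three-piece skeleton decomposition \eqref{cut_in_three} instead of the $l_n$-piece one of Section \ref{subsec:outline}. Concretely, I would introduce the cutoff threshold $M$ via the maximum $M^{(2)}_{n}(y) := \max\{Y_{i,j}\}$ taken over all $(i,j)$ reachable by a path that contributes to some $V_1^\alpha + U_B^\alpha + V_2^\alpha$ (this reachable set is contained in a parallelogram of area at most $C K_n^2 (1+y)^2$ for some constant), work first under the conditional law $\Prob_{B,M,y'}$ under which the $Y_{i,j}$ become i.i.d. bounded by $rM$, and only at the very end integrate the resulting bound against the law of $M^{(2)}_{n}(y)$, using $\Exp[M^{(2)}_n(y)] = O(\log(1+K_n y))$ as in \eqref{def_expmax}--\eqref{bound_expmax}.

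The first key point is independence of the three random variables $V_1^\alpha(\sigma,\tilde\gamma)$, $U_B^\alpha(\sigma,\tilde\gamma)$, $V_2^\alpha(\sigma,\tilde\gamma)$ under $\Prob_{B,M,y'}$, which is what makes the cut-off version of Lemma \ref{cor_max} applicable. This holds because the three pieces use disjoint families of service times: by construction the $j$-ranges of the reachable vertices are $[0,\tilde z_1]$ for $V_1^\alpha$, $[\tilde z_1+(1-\sigma)/2,\tilde z_1+\tilde y - (1-\sigma)/2]$ for $U_B^\alpha$, and $[\tilde z_1+\tilde y,y']$ for $V_2^\alpha$, so they intersect only at the two common heights $j=\tilde z_1$ and $j=\tilde z_1+\tilde y$; at those heights the $i$-ranges are disjoint as well, since $V_1^\alpha$ uses $i\le 0$ while $U_B^\alpha$ is restricted to $i\in[0,\sigma(K_n-1)]$ starting from $i=\sigma$, and analogously for the junction with $V_2^\alpha$. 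Lemma \ref{lemma_concentration} then yields representations
\[
V_1^\alpha = \Exp_M[V_1^\alpha] + 8M\sqrt{2\tilde z_1}\,Z^{(1)},\quad U_B^\alpha = \Exp_M[U_B^\alpha]+8M\sqrt{\sigma K_n + 2\tilde y}\,Z^{(2)},\quad V_2^\alpha = \Exp_M[V_2^\alpha]+8M\sqrt{2\tilde z_2}\,Z^{(3)},
\]
with $Z^{(i)}$ independent and subgaussian.

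Next I would apply Lemma \ref{cor_max} with $\mathcal I=\{1,2,3\}$ and $\mathcal A=\tilde\Gamma_n$ as defined in \eqref{def_new_skeleton}. The sum of variances is at most $64 M^2 (K_n + 2 y') \leq 64 M^2 K_n (\sigma + 2y+o(1))$ uniformly in $\tilde\gamma$, and the entropy is controlled by the elementary count
\[
|\tilde\Gamma_n| \;\le\; (y'+1)^2,\qquad\text{hence}\qquad \log|\tilde\Gamma_n|\;\le\;2\log(1+K_n y),
\]
obtained by choosing $\tilde z_1\in\{0,\ldots,y'\}$ and $\tilde z_2\in\{0,\ldots,y'-\tilde z_1\}$. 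Thus the conditional version of \eqref{eq:fluct:3}--\eqref{eq:fluct:3bis} gives, after division by $K_n$,
\[
\tfrac{1}{K_n}\mathcal F^{(2)}_n(y) \;\le\; \tfrac{C M}{\sqrt{K_n}}\sqrt{1+y}\,\bigl(1+\sqrt{\log(1+K_n y)}\bigr) + \tfrac{M}{K_n},
\]
where $\mathcal F^{(2)}_n(y)$ denotes the left-hand side of \eqref{fluct_2} under $\Prob_{B,M,y'}$. Removing the cutoff by integrating against the law of $M^{(2)}_n(y)$ and using $\Exp[M^{(2)}_n(y)]=O(\log(1+K_n y))$ yields
\[
e_n^{(2)}(\sigma,y) \;\le\; \frac{C'(1+\log(1+K_n y))^{3/2}\sqrt{1+y}}{\sqrt{K_n}} + \frac{C'(1+\log(1+K_n y))}{K_n},
\]
which is independent of $\alpha$ and vanishes locally uniformly in $y$ as $n\to\infty$, since the prefactor $K_n^{-1/2}$ beats all polylogarithmic factors.

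The only genuine obstacle is the independence verification in the second paragraph: because $V_1^\alpha$ and $V_2^\alpha$ are \emph{unrestricted} last passage times, one must check carefully that the parallelogram-shaped reachable sets of the three pieces avoid each other, so that conditioning on $M^{(2)}_n(y)$ preserves independence under $\Prob_{B,M,y'}$. Once this geometric fact is nailed down, everything else is a direct transcription of Section \ref{fluct_ent} with the exponent $l_n$ replaced by $3$ and the binomial entropy bound replaced by the above quadratic count.
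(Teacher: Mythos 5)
Your proposal reproduces the paper's proof essentially verbatim: same three-step scheme (condition on the maximum $M$ of the service times in a box containing all reachable vertices, apply Lemma \ref{lemma_concentration} plus Lemma \ref{cor_max} to the three independent pieces under $\Prob_{n,M,y'}$, then remove the cutoff by integrating over the law of the maximum and invoking \eqref{bound_expmax}); same variance bound $64M^2(\sigma K_n+2y')$; same entropy bound $\log|\tilde\Gamma_n|=O(\log(K_n y))$; same final polylogarithmic-over-$\sqrt{K_n}$ error. The one place you go beyond the paper is welcome: you make explicit the geometric fact (disjointness of the parallelogram-shaped reachable sets of $V_1^\alpha$, $U_B^\alpha$, $V_2^\alpha$, with equality of $j$-ranges only at the two junction heights for $\sigma=1$ and strict disjointness of $j$-ranges for $\sigma=-1$) that justifies the independence of the three terms under the product cutoff law, which the paper merely asserts. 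That verification is correct as stated, and the overall proof is sound.
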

\begin{proof}
We proceed in three steps as in the proof of Proposition \ref{prop:fluct}.\\ \\
{\bf Step 1.} {\it Cutoff.}
We again use a truncation procedure for the service times $Y_{i,j}$ as in step one of Subsection \ref{fluct_ent}.
 Here, we define  $\Prob'_{n,M,y'}$ as
 the distribution of the family
$(Y_{i,j}:\,i\in \textcolor{black}{[-  y', \sigma K_n+  y']},\; j\in[0,y'])$ conditioned on their maximum $M_B(y)$ being $rM$, and $\Prob_{n,M,y'}$ as the distribution of the family $(X_{i,j}:\,i\in \textcolor{black}{[-  \sigma y', \sigma K_n+  y']},\;  j\in[0,y'])$,
where $X_{i,j}$ are i.i.d. random variables, and the law of $X_{i,j}$ is the law of $Y_{i,j}$ conditioned on 
$Y_{i,j}\leq rM$. 
For  simplicity, we will only write $\Prob_M$ and $\Prob'_M$ for these distributions. 

\medskip

\noindent
{\bf Step 2.} {\it Fluctuations under cutoff.}
 Applying Lemma \ref{lemma_concentration} under $\Prob_{n,M,y'}$, we have
 \begin{eqnarray}
 V^\alpha_1(\sigma,\tilde{\gamma}) & \leq & 
 \Exp_M \big( V^\alpha_l(\sigma,\tilde{\gamma}) \big)
 +8M\sqrt{2\tilde{z}_1}Z_1  ,  \nonumber\\
 V^\alpha_2(\sigma,\tilde{\gamma})
  & \leq & \Exp_M \big(  V^\alpha_2(\sigma,\tilde{\gamma}) \big) + 8M\sqrt{2\tilde{z}_2}Z_2, 
  \nonumber\\
 U^\alpha_B(\sigma,\tilde{\gamma}) & \leq & \Exp_M \big( U^\alpha_B(\sigma,\tilde{\gamma}) \big)
 +8M\sqrt{ \sigma K_n+2\tilde{y}}Z_0,
 \label{concentration_three}
 \end{eqnarray}
 where $Z_1$, $Z_2$ and $Z_0$ are independent random variables such that
 $$
 \Prob_{n,M,y}(Z_k\geq t)\leq e^{-t^2},
 $$
for $k\in\{0,1,2\}$. 
We now apply Lemma \ref{cor_max} with $\mathcal A=\tilde{\Gamma}_n$, $\mathcal I=\{1,2,B\}$, 
and for $a=\tilde{\gamma}\in\tilde{\Gamma}_n$, ${\mathcal Y}_{a,1}=V^\alpha_1(\sigma,\tilde{\gamma})$,
${\mathcal Y}_{a,2}=V^\alpha_2(\sigma,\tilde{\gamma})$, ${\mathcal Y}_{a,B}=U^\alpha_B(\sigma,\tilde{\gamma})$,
$V_{a,1}=2\tilde{z}_1$, $V_{a,2}=2\tilde{z}_2$, $V_{a,B}=\sigma K_n+2\tilde{y}$.
Since (see \eqref{def_new_skeleton})
\be\label{nb_new_skeletons}
|\tilde{\Gamma}_n|=\left(
\ba{l}
2+ \lfloor K_n y\rfloor-\sigma^- K_n\\
2
\ea
\right)
\leq K_n^2 (1+y)^2,
\ee
%
%
we obtain
\begin{align}
& 
 \Exp_M \left( \frac{1}{K_n} \; \max_{(\tilde{z}_1,\tilde{y},\tilde{z}_2)\in\tilde{\Gamma}_n }
\Big\{
V^\alpha_1(\sigma,\tilde{\gamma})
+U^\alpha_B(\sigma,\tilde{\gamma})
+V^\alpha_2(\sigma,\tilde{\gamma})
\Big\}
\right)
 \nonumber\\
&\qquad \quad \leq \max_{(\tilde{z}_1,\tilde{y},\tilde{z}_2) \in \tilde{\Gamma}_n }
\Big\{ \frac{1}{K_n}\left[
\Exp_M  \big(  V^\alpha_1(\sigma,\tilde{\gamma}) \big) +
\Exp_M \big(  U^\alpha_B(\sigma,\tilde{\gamma}) \big) +
\Exp_M \big(  V^\alpha_2(\sigma,\tilde{\gamma}) \big)
\right]  \Big\}
\label{fluct_3}\\
&  \qquad \qquad \qquad  \nonumber 
+
8M \frac{ \sqrt{\sigma+2y}}{\sqrt{K_n}} 
\left(\sqrt{3\pi}+
\sqrt{\pi}\sqrt{A}+\sqrt{A}\sqrt{\log|\tilde{\Gamma}_n|}
\right)\nonumber\\
&\qquad \quad \nonumber 
\leq \max_{(\tilde{z}_1,\tilde{y},\tilde{z}_2) \in \tilde{\Gamma}_n }
\Big\{ \frac{1}{K_n}\left[
\Exp  \big( V^\alpha_1(\sigma,\tilde{\gamma}) \big) +
\Exp \big( U^\alpha_B(\sigma,\tilde{\gamma}) \big) +
\Exp \big( V^\alpha_2(\sigma,\tilde{\gamma}) \big)
\right]  \Big\} \\
& \qquad \qquad \qquad 
+  8M
\frac{
\sqrt{\sigma+2y}
}{
\sqrt{K_n}
}\left(
\sqrt{3\pi}+
\sqrt{\pi}\sqrt{A}+\sqrt{A}\sqrt{\log|\tilde{\Gamma}_n|}
\right).
\nonumber
\end{align} 
In the last inequality, we have used the fact that the passage times under $\Prob_M$ are stochastically dominated by the passage times under $\Prob$.\\ \\
{\bf Step 3.} {\it Removing the cutoff.}
As in step three of the proof of Lemma \ref{cor_max}, a coupling argument shows that
the distribution under $\Prob'_{M}$ of any passage time $T$ depending only on the previous set of $Y_{i,j}$ is dominated by the distribution under $\Prob_{M}$ of $T+M$. Therefore 
\begin{eqnarray}
& & \Exp'_M \left( \frac{1}{K_n} \; \max_{(\tilde{z}_1,\tilde{y},\tilde{z}_2)\in\tilde{\Gamma}_n }
\Big\{
V^\alpha_1(\sigma,\tilde{\gamma})
+U^\alpha_B(\sigma,\tilde{\gamma})
+V^\alpha_2(\sigma,\tilde{\gamma})
\Big\}
\right)\nonumber\\
& &\qquad  \quad \nonumber 
\leq \max_{(\tilde{z}_1,\tilde{y},\tilde{z}_2) \in \tilde{\Gamma}_n }
\Big\{ \frac{1}{K_n} \Big(
\Exp \big(  V^\alpha_1(\sigma,\tilde{\gamma}) \big) +
\Exp \big( U^\alpha_B(\sigma,\tilde{\gamma}) \big) +
\Exp \big( V^\alpha_2(\sigma,\tilde{\gamma}) \big)
\Big)  \Big\} \\
&  &\qquad \qquad+ \frac{M}{K_n}+ 8M\frac{
\sqrt{\sigma+2y}
}{
\sqrt{K_n}
}\left(
\sqrt{3\pi}+
\sqrt{\pi}\sqrt{A}+\sqrt{A}\sqrt{\log|\tilde{\Gamma}_n|}
\right).
\nonumber
\end{eqnarray} 
Integrating the above inequality with respect to the distribution of $M_B(y)$ yields \eqref{fluct_2}, with
$$
e_n^{(2)}(\sigma,y)  :=  m \big(  \lfloor K_n y\rfloor (\sigma K_n+2 \lfloor K_n y\rfloor) \big) \, E_n(\sigma,y),
$$
where $m(.)$ satisfies the bound \eqref{bound_expmax}, and
$$
E_n(\sigma,y):= \frac{1}{K_n}+ 8\frac{
\sqrt{\sigma+2y}
}{
\sqrt{K_n}
}\left(
\sqrt{3\pi}+
\sqrt{\pi}\sqrt{A}+\sqrt{A}\sqrt{\log|\tilde{\Gamma}_n|}
\right),
$$
from which one can see that $e_n^{(2)}(\sigma,.)$ converges locally uniformly to $0$.
\end{proof}

\subsection{Proof of Theorems \ref{thm: passage upper} and \ref{th_plateau}}
\label{conclude}
Theorem  \ref{th_plateau} is a consequence of Theorem \ref{thm: passage upper} which we prove now.
Given $\sigma\in\{-1,1\}$, by Theorem \ref{th_sep}, we have 
$$
\tau_\varepsilon(\sigma,y)  =  \lim_{n\to\infty} \mathcal E_\varepsilon \times \Exp
\left( \frac{1}{K_n}T(\sigma K_n, \lfloor K_n y\rfloor ) \right)
= \lim_{n\to\infty}\mathcal E_\varepsilon\left(\tau^\alpha_n(\sigma,y)\right), 
$$
where $\mathcal E_\varepsilon$ stands for the expectation with respect to the disorder
$\alpha$.
Note that the above limit does not follow directly from Theorem \ref{th_sep}, which yields an a.s. limit. 
However, the convergence in Theorem \ref{th_sep} holds also in $L^1$. This follows from a quasi-Gaussian tail estimate for the passage time $T(\sigma K_n, \lfloor K_n y\rfloor)$, obtained from Lemma \ref{lemma_concentration} and a cutoff as in step four of Subsection \ref{fluct_ent}.
%
Let ${\bf G}_n(\sigma)$ be the set of environments $\alpha$ for which $[0,\sigma(K_n-1)]$ is a good block. 
The mean passage time 
can be decomposed as
$$
\mathcal E_\varepsilon[\tau^\alpha_n(\sigma,y)]
=
\mathcal E_\varepsilon\left[
\tau^\alpha_n(\sigma,y)\indicator{{\bf G}_n(\sigma)}
\right]+\mathcal E_\varepsilon\left[
\tau^\alpha_n(\sigma,y)\indicator{{\bf A}\backslash{\bf G}_n(\sigma)}
\right],
$$
where ${\bf A}:=[0,1]^\Z$ is the set of environments.
By Corollary \ref{cor_block} (recall that the function \textcolor{black}{$e_n$} in \eqref{limit_time} does not depend on $\alpha$), the $\limsup$ of the first term is bounded above by $\tau^{\overline{\rho}_c,r/4}( \sigma,y)$. On the other hand, the second term is bounded above by
$$
\mathcal E_\varepsilon\left[ \left(\tau^\alpha_n(\sigma,y)\right)^2\right]^{1/2}
\,
\mathcal P_\varepsilon
\left[
{\bf A}
\backslash
{\bf G}_n(\sigma)
\right]^{1/2}.
$$
The $\mathcal P_\varepsilon$-probability vanishes as $n\to\infty$ by Lemma \ref{lemma_good}, while
the expectation of the squared passage time {\color{black} can be bounded by} 
$$
\mathcal E_\varepsilon \left[ \left(\tau^\alpha_n(\sigma,y)\right)^2\right]
\leq\tau_n(\sigma,y)^2,
$$
where $\tau_n(\sigma,y)$ is defined as \eqref{def_unrestricted} for a homogeneous environment $\alpha(x)\equiv r$ (that is for rate $r$ homogeneous TASEP).
The limit $\tau_n(\sigma,y)\to r^{-1} (\sqrt{\sigma+y}+\sqrt{y})^2$ as $n \to \infty$,  follows from the above remark on $L^1$-convergence of rescaled passage times in Theorem \ref{th_sep}. This implies
$\tau_n(\sigma,y)^2\to r^{-2} (\sqrt{\sigma+y}+\sqrt{y})^4$ as $n\to\infty$. 
We finally get
\be
\label{final_bound_passage}
\tau_\varepsilon({\color{black} \sigma},y)\leq\tau^{\overline{\rho}_c,r/4}( {\color{black} \sigma} ,y),
\ee
for every ${\color{black} \sigma} \in\{-1,1\}$ and $y \geq {\color{black} \sigma}^-$. Since $\tau_\varepsilon$ and $\tau^{\overline{\rho}_c,r/4}$ are homogeneous functions, 
\eqref{lower_bounds} follows  for $\rho=\overline{\rho}_c$.

We now show that
$$
\lim_{\varepsilon\to 0}\rho_c(\varepsilon)=\rho_c(0).
$$
Indeed, by \eqref{critical_density_2} and \eqref{final_bound_passage},
we have $\rho_c\leq\overline{\rho}_c$.
Then, by Proposition \ref{prop: estimation suites}, 
$$
\limsup_{\varepsilon\to 0}{\rho}_c(\varepsilon)
\leq
\limsup_{\varepsilon\to 0}\overline{\rho}_c(\varepsilon)\leq\rho_c(0).
$$
{\color{black} The reversed inequality will be proved by contradiction. Suppose that we have}
$$
\liminf_{\varepsilon\to 0}\rho_c(\varepsilon)<\rho_c(0),
$$
then for some $\varepsilon>0$ we would have $\rho_c(\varepsilon)<\rho_c(0)$, hence
$$
\frac{r}{4}=\max f_\varepsilon=f_\varepsilon[\rho_c(\varepsilon)]
\leq f_{\rm TASEP}[\rho_c(\varepsilon)]<f_{\rm TASEP}[\rho_c(0)]=\frac{r}{4},
$$
where $f_{\rm TASEP}$ denotes the flux of the homogeneous rate 1 TASEP, and the last inequality follows from $\rho_c(\varepsilon)<\rho_c(0)\leq 1/2$.

\subsection{Proof of Theorems \ref{th:passage_dilute} and \ref{th_dilute_limit}}
\label{sec:proof_dilute}

{\color{black} 
Using Proposition \ref{prop:passage_dilute}, we are going to derive the limiting passage time of 
Theorem \ref{th:passage_dilute} and then conclude Theorem \ref{th_dilute_limit}.


\medskip

By coupling with a rate $1$ homogenous TASEP, we get 
\begin{equation}
\label{trivial_bound}
\tau_\varepsilon(1,y)\ge g_1(1,y).
\end{equation}
Combining this with Proposition \ref{prop:passage_dilute}, 
we deduce that 
}
$$
\forall y<y_1, \qquad 
\lim_{\varepsilon\to 0}\tau_\varepsilon(1,y)= g_1(1,y) = 
(\sqrt{1+y}+\sqrt{y})^2,
$$
where $\tau_\varepsilon$ denotes the limiting rescaled passage time. 
%
%
Similarly, one can show that 
$$
{\color{black} \forall  y \in [1,y'_1]}, \qquad 
\lim_{\varepsilon\to 0}\tau_\varepsilon(-1,y)=g_1(-1,y)=(\sqrt{-1+y}+\sqrt{{y}})^2.
$$
As the height profile $h_\varepsilon(t,x)=tk_{\textcolor{white}{\varepsilon}}(\frac{x}{t})$ \eqref{corr_4}
is the inverse of $\tau_\varepsilon(x,y)$ wrt $y$, we obtain
\begin{equation}\label{k_tasep}
\lim_{\varepsilon\to 0}k_\varepsilon(v)=\frac{(1-v)^2}{4},\quad\forall v\in [1-2\rho_1(0),1]\cup [-1,2\rho_1(0)-1].
\end{equation}
Next we use
\begin{equation}\label{duality}
f_\varepsilon(\rho)=\inf_{v}[\rho v+k_\varepsilon(v)].
\end{equation}
For $\rho\not\in[\rho_1(0),1-\rho_1(0)]$, it follows from (\ref{k_tasep}) that the minimum in (\ref{duality}) is achieved for
$v=v_\varepsilon\to 1-2\rho$ as $\varepsilon\to 0$, thus
$$
\lim_{\varepsilon\to 0}f_\varepsilon(\rho)=\rho(1-\rho).
$$
Since the above expression takes value $r/4$ for $\rho\in\{\rho_1(0),1-\rho_1(0)\}$, 
and $f_\varepsilon$ is a concave function with maximum value $r/4$, 
we then necessarily have
$$
\lim_{\varepsilon\to 0}f_\varepsilon(\rho)=\frac{r}{4},\quad\forall \rho\in[\rho_1(0),1-\rho_1(0)].
$$

\begin{appendix}
\section{Proof of Proposition \ref{prop_max}.}
\label{appendix_max}
%
%
To show that the maximum value of the flux is at least $r/4$, we use Definition \textcolor{black}{(\ref{flux_current})} and couple the process $(\eta_t^\alpha)_{t\geq 0}$ with generator \eqref{generator}
with a homogeneous rate $r$ TASEP denoted by $(\eta_t^r)_{t\geq 0}$. 
\textcolor{black}{Lemma \ref{lemma_j_alpha}} shows that $J_x^\alpha(t,\eta^\rho)\geq J_x^r(t,\eta^\rho)$, where $J_x^r$ denotes the current in the homogeneous rate $r$ TASEP.
It is known (see e.g. \cite{sep0}) that
$$
\lim_{t\to+\infty} \frac{1}{t} J_x^r(t,\eta^\rho)=r\rho(1-\rho)
$$
and  it is maximum for 
$\rho=1/2$.

\medskip

\margincom{\color{black} On a perdu l'autre definition du flux.\\ On la rajoute ?\textcolor{black}{bien oblige}}
We now prove that $f(\rho)\leq r/4$ for all $\rho\in[0,1]$.
\textcolor{black}
{
In \cite{bgrs4, bb} it is shown that there exists a closed subset $\mathcal R$  of $[0,1]$ containing $0$ and $1$,
and a family $(\nu^\alpha_\rho)_{\rho\in{\mathcal R}}$ of invariant measures for the disordered TASEP, such that, for every $\rho\in\mathcal R$,
\be\label{flux_line}
f(\rho)=\int j_x^\alpha(\eta)d\nu^{\alpha}_{\rho}(\eta),\quad x\in\Z,
\ee
%
%
where $j_x^\alpha(\eta):=\alpha(x)\eta(x)[1-\eta(x+1)]$, and that $f$ is interpolated linearly outside $\mathcal R$. Note (this follows from stationarity) that the integral in \eqref{flux_line} does not depend on $x$.
}
It is thus enough to consider $\rho\in\mathcal R$.
%
Since the random variables $\alpha(x)$ are i.i.d. and the infimum of their support is $r$,
for $\mathcal P$-a.e. environment $\alpha\in{\bf A}$, 
there exist sequences $(x_N)_{N \geq 1}$, $(y_N)_{N  \geq 1}$ and $(\varepsilon_N)_{N  \geq 1}$ such that 
$\lim_{N\to\infty}x_N=+\infty$, $\lim_{N\to\infty}[y_N-x_N]=+\infty$, $\lim_{N\to\infty}\varepsilon_N=0$, and
\be
\label{interval_r}
r\leq\min_{x=x_N,\ldots,y_N}\alpha(x)\leq\max_{x=x_N,\ldots,y_N}\alpha(x)\leq r+\varepsilon_N .
\ee
{\color{black} The intervals $[x_N,y_N]$ with low rates act as bottlenecks for the particle flux.}
Set 
$$
a_N=\frac{2x_N+y_N}{3},\quad
b_N=\frac{x_N+2y_N}{3},
$$
which satisfy $x_N \leq a_N \leq b_N \leq y_N$ and $b_N-a_N\to+\infty$.
By \eqref{flux_line},
\begin{eqnarray}
f(\rho) =  \frac{1}{b_N-a_N+1}
\sum_{x=a_N}^{b_N}\int_{\bf X}j^\alpha_x(\eta)d\textcolor{black}{\nu^\alpha_\rho}(\eta)
\leq [r+\varepsilon_N] \int_{\bf X}\tilde{j}(\eta)d{\mu}_N (\eta),
\label{flux_inequality} 
\end{eqnarray}
where 
$
\tilde{j}(\eta)=\eta(0)[1-\eta(1)]
$, and 
\be
\label{averaged_measure}
{\mu}_N:= \frac{1}{b_N-a_N+1} \sum_{x=a_N}^{b_N}\textcolor{black}{\theta}_x\textcolor{black}{\nu^\alpha_\rho}.
\ee
\textcolor{black}{where $(\theta_x)_{x\in\Z}$ is the group of spatial shifts, whose
action on particle configurations is defined
by $(\theta_x\eta)(y)=\eta(x+y)$ for every $\eta\in\bf X$ and $y\in\Z$. Similarly, $(\theta_x\alpha)(y)=\alpha(x+y)$ for every $\alpha\in\bf A$ and $y\in\Z$.  If $\varphi$ is a cylinder function of $\bf X$, we set
$\theta_x \varphi:=\varphi\circ\theta_x$. If $\mu$ is a probability measure on $\bf X$, 
$\theta_x\mu:=\mu\circ(\theta_x)$. Finally, the action of $\theta_x$ on the generator $L^\alpha$ given by \eqref{generator} is defined by $(\theta_x L^\alpha)\varphi=\theta_x(L^\alpha \varphi)$ for every cylinder function $\varphi$ on $\bf X$.}\\ \\
The sequence $(\mu_N)_{N\in\N^*}$ of probability \textcolor{black}{measures} on the compact space $\bf X$  is tight. Let $\mu^\star$ be one of its limit points. 
It follows from \eqref{averaged_measure} that $\mu^\star$
is shift invariant, i.e. $\textcolor{black}{\theta_x}\mu^\star = \mu^\star$ for all $x\in\Z$. We claim and prove below that $\mu^\star$ is an invariant measure for the homogeneous TASEP, that is the process with generator \eqref{generator} with $\alpha(x)\equiv 1$. By Liggett's characterization result \cite{lig} for shift-invariant stationary measures, $\mu^\star$ is then of the form
$$
\mu^\star  = \int_{[0,1]}\textcolor{black}{\nu_\rho}\gamma(d\rho).
$$
where $\gamma$ is a probability measure on $[0,1]$, and \textcolor{black}{$\nu_\rho$} is the product Bernoulli measure on $\bf X$ with parameter $\rho$. Thus 
$$
\int_{\bf X}\tilde{j}(\eta)d\mu^\star (\eta)=\int_{[0,1]}\rho(1-\rho)d\gamma(\rho)\leq\frac{1}{4}.
$$
Letting $N\to\infty$ in \eqref{flux_inequality} implies $f(\rho)\leq r/4$.\\ \\
We now prove that $\mu^\star$ is an invariant measure for the homogeneous TASEP. Let $g:{\bf X}\to\R$ be a local function that depends on $\eta$ only through sites $x\in\Z$ such that $|x|\leq\Delta$,
where $\Delta\in\N$. Take $N$ large enough so that
$\Delta<(y_N-x_N)/3$. Notice that  the generator $L^\alpha$ defined in \eqref{generator} satisfies the commutation relation
\be\label{commutation}
\textcolor{black}{\theta_x}  L^{\textcolor{black}{\theta_x} \alpha} f=L^{\alpha} (\textcolor{black}{\theta_x} f).
\ee
It follows that
$$
\int_{\bf X} \textcolor{black}{L^{\textcolor{black}{\theta_x}\alpha}} g\,d(\textcolor{black}{\theta_x}\textcolor{black}{\nu_\rho^\alpha})=\int_{\bf X} L^{\alpha}(\textcolor{black}{\theta_x} g)\,d\textcolor{black}{\nu_\rho^\alpha}=0.
$$
The last equality follows from invariance of $\nu_\rho^\alpha$. 
On the other hand, for $x\in[a_N,b_N]$ and $|y|\leq(y_N-x_N)/3$,  $\tau_x\alpha(y)\in[r,r+\varepsilon_N]$. Let $L$ denote the generator of the homogeneous TASEP on $\Z$,
that is the one obtained from \eqref{generator} when $\alpha(x)\equiv 1$. Since
$$
\Big| \textcolor{black}{L^{\textcolor{black}{\theta_x}\alpha}} g(\eta)-r Lg(\eta) \Big|
\leq  
2 ||g||_\infty \sum_{y=-\Delta-1}^{\Delta}|\textcolor{black}{\theta_x}\alpha(y)-r|,
$$
it follows that
$$
\lim_{N\to\infty} \max_{x=a_N,\ldots,b_N}\sup_{\eta\in{\bf X}}\left|
\textcolor{black}{L^{\textcolor{black}{\theta_x}\alpha}}g(\eta)-r Lg(\eta)
\right|=0.
$$
Hence
$$
\int_{\bf X}Lg(\eta) \, d \mu^\star (\eta)=\lim_{N\to\infty}\int_{\bf X}Lg(\eta)d\mu_N(\eta)
=\lim_{N\to+\infty}
%
\textcolor{black}{\frac{1}{b_N-a_N}\sum_{x=a_N}^{b_N}
\int_{\bf X}\frac{1}{r}L^{\alpha}(\textcolor{black}{\theta_x} g)\,d\nu^\alpha_\rho}
=0
$$
holds for every local function $g$.
%


\textcolor{black}{
\section{Proof of Proposition \ref{prop_monotone}}\label{app:std}
The proof uses the following lemma.
\begin{lemma}\label{lemma_j_alpha}
Let $B$ be a nonempty interval of $\Z$, and $B^\#$ as in \eqref{def_bdiese}
Assume $\alpha$ and $\alpha'$ are two environments such that $\alpha(x)\leq\alpha'(x)$ 
for every $x\in B^\#$. Then
\begin{itemize}
\item[(i)] For every $t\geq 0$, $x\in B$ and $\eta\in \{0,1\}^B$,
\be\label{compare_currents_B}
J^{\alpha,B^\#}_x(t,\eta)\leq J^{\alpha',B^\#}_x(t,\eta),
\ee
where $J^{\alpha,B^\#}_x(t,\eta)$ denotes the rightward current across site $x$ up to time $t$ for the process with generator \eqref{generator_open}.
\item[(ii)] If $B$ is finite, then $j_{\infty,B^\#}(\alpha_{B^\#})\leq j_{\infty,B^\#}(\alpha'_{B^\#})$.
\end{itemize}
\end{lemma}
\begin{proof}[Proof of Proposition \ref{prop_monotone}]
Statement (ii) follows from (i) and \eqref{def_critical_density}. We now prove (i).
Let $U=(U_x)_{x\in\Z}$ be a family of i.i.d. $\mathcal U(0,1)$ random variables, and $V=(V_x)_{x\in\Z}$ be a family of i.i.d.  random variables with distribution $Q$, independent of $U$.   
We set
\be
\label{couple_env}
\alpha_\varepsilon(x)=V_x{\bf 1}_{\{ U_x\leq\varepsilon \} }+{\bf 1}_{ \{ U_x>\varepsilon \}},
\ee
Thus
$\alpha_\varepsilon$ has distribution $Q_\varepsilon$
defined in \eqref{law_disorder}, and since $V_x\leq 1$, 
\be
\label{order_alpha}
\alpha_\varepsilon\leq\alpha_{\varepsilon'}\quad\mbox{if } \varepsilon\leq\varepsilon',
\ee
where the inequality is in the sense of product order. 
Let $\rho\in[0,1]$ and $\eta^\rho\in \bf X$ satisfying \eqref{uniform_profile}. By \eqref{flux_current}, for almost every realization of $(U,V)$,   the following limit holds in probability
$$
f_\varepsilon(\rho)=\lim_{t\to+\infty} \frac{1}{t} J_0^{\alpha_\varepsilon}(\eta^\rho,t)=f_\varepsilon(\rho).
$$
The conclusion then follows from \eqref{order_alpha} and Lemma \ref{lemma_j_alpha}. 
\end{proof}
\begin{proof}[Proof of Lemma \ref{lemma_j_alpha}.]
Statement (ii) follows from (i) and \eqref{flux_current}. We now prove (i). Let $B=[x_1,x_2]\cap\Z$, with 
$x_1,x_2\in\overline{\Z}$ such that $x_1\leq x_2$.
We are going to couple the TASEP's $(\eta^{\alpha}_t)_{t\geq 0}$ and $(\eta^{\alpha'}_t)_{t\geq 0} $ in environments $\alpha, \alpha'$ starting from the same configuration $\eta\in\{0,1\}^B$.
If $x_1 \not = -\infty$,   the initial configuration $\eta$ is extended so that site $x_1-1$ has an infinite stack of particle, and there is no particle to the left of $x_1-1$.
Particles of the extended initial configuration are labelled  increasingly towards the left.
We choose the initial labeling so that the lowest label of a particle in the stack is $1$. Thus
the set of labels is
$$
I=\Z\cap[n,+\infty[,\quad\mbox{where }n:=1-\sum_{x\in B}\eta(x).
$$
The coupled evolution is defined using a Harris type construction, as in \cite{har}, of both TASEP's from a common Poisson point measure on $(0,+\infty)\times\Z\times(0,1)$ with intensity $dt dx{\bf 1}_{(0,1)}(u)du$, where $dx$ is the counting measure on $\Z$.
If $(t,x,u)$ is a Poisson point, and $\beta\in\{\alpha,\alpha'\}$, a particle jumps from $x$ to $x+1$ 
in the configuration $\eta^\beta_.$ if 
\begin{itemize}
\item[(i)] 
$x\in\Z\cap[x_1-1,x_2]$, 
\item[(ii)]
$u\leq\beta(x)$, 
\item[(iii)] 
one of the following holds: either $x=x_1-1$ and $\eta^\beta_{t-}(x_1)=0$, or 
$x=x_2$ and $\eta^\beta_{t-}(x_2)=1$, or $x_1<x<x_2$, $\eta^\beta_{t-}(x)=1$ and $\eta^\beta_{t-}(x+1)=0$.
\end{itemize}
If a jump occurs from the stack, the particle that jumps is the stack particle with the lowest label.
Let $\sigma_i(t)$, resp. $\sigma'_t(i)$,  denote the position at time $t$ of particle $i$ in $\eta^\alpha_t$,
resp. $\eta^{\alpha'}_t$.  
{\color{black} 
The initial conditions are identical, i.e. $\sigma_0(i)=\sigma'_0(i)$ for every $i\in I$. As a consequence of the assumption $\alpha\leq\alpha'$ and of the rules (i)--(iii),  
the order between the particle configurations is preserved by the coupling at any time} 
\be
\label{ordered_particles}
\forall i\in\N,
\qquad\sigma_t(i) \leq \sigma'_t(i) .
\ee
{\color{black} By construction, for $\beta\in\{\alpha,\alpha'\}$, $J_x^\beta(t,\eta)$ is equal to} the highest label of a particle in $\eta^\beta_.$ having left site $x$ by time $t$. Thus \eqref{ordered_particles} implies (i).
\end{proof}
}
\section{Proof of Lemma \ref{lemma_reform}}
\label{appendix_lpp}
Before deriving  Lemma  \ref{lemma_reform}, we first explain a mapping between the restricted passage times in a box $B$ and the TASEP restricted to $B$ with reservoirs.

\subsection{last passage times in  a finite domain}
\margincom{
\textcolor{black}{
Plusieurs modifications dans cette sous-section.
}
}
Let $B:=[x_1,x_2]\cap\Z$. The purpose of this subsection is to give an interpretation of the passage times \eqref{restricted_time} restricted to $B$ in terms of an open disordered TASEP
on $B':=[x_1+1,x_2]\cap\Z$ with generator $L^\alpha_{B}$, see \eqref{generator_open} (recall from \eqref{def_bdiese} and \eqref{def_bprime} that
$(B')^\#=B$).
It is convenient to view the dynamics generated by \eqref{generator_open} as follows.
We add an infinite stack of particles (reservoir) at site $x_1$, and a site $x_2+1$ where the number of particles is not restricted.
Particles enter $B'$ from the stack at $x_1$, and when they leave, they stay at $x_2+1$ forever.
%
%
%
We are going to check that  $T^\alpha_B \big((x_1,0),(i,j) \big)$ has the same distribution as the time when particle $j$ reaches site $i+1$ in the process generated by $L^\alpha_B$,
if the initial state is given by
\be
\label{special_initial}
\sigma_0(j)=x_1\indicator{\{j\geq \textcolor{black}{0}\}}+(x_2+1)\indicator{\{j\leq-1\}}.
\ee
where $\sigma_0(j)$ denotes the initial position of  the particle with label $j$, and particles are numbered increasingly from right to left. 
In fact,  we may define passage times associated with more general labeled initial configurations in $B'$. By this we mean that $\sigma_0$, instead of being defined by \eqref{special_initial},
can be any nonincreasing function $\sigma_0$ from $\Z$ to $[x_1,x_2+1]\cap\Z$.
%
Let
\begin{equation}
\label{new_passage_bulk}
\tilde{B}  := \{ (i,j)\in B\times\Z:\,
i\geq\sigma_0(j) \}
\quad \text{and} \quad 
\bar{B} :=  \{ (i,j)\in B\times\Z:\,  i<\sigma_0(j)  \}.
\end{equation}
For $(i,j)\in B\times\Z$, let $T^\alpha_{B,\sigma_0} (i,j)$
%
%
denote the time at which particle $j$ reaches site $i+1$.
These passage times are determined by the boundary condition
\be\label{passage_boundary}
T^\alpha_{B,\sigma_0} (i,j)=0\mbox{ for }(i,j)\in\bar{B}
\ee
together with the following recursions:
%
%
\begin{equation}
\label{recursion_passage}
T^\alpha_{B,\sigma_0} (i,j) =  \frac{Y_{i,j}}{\alpha(i)}+\max[T^\alpha_{B,\sigma_0} (i-1,j),
T^\alpha_{B,\sigma_0} (i+1,j-1)]
\end{equation}
for $(i,j)\in\tilde{B}$ such that  $x_1<i<x_2$,
 \begin{equation}
 \label{recursion_passage_right}
 T^\alpha_{B,\sigma_0} (i,j) =  \frac{Y_{i,j}}{\alpha(i)}+T^\alpha_{B,\sigma_0} (i-1,j),
 \end{equation}
 for  $(i,j)\in\tilde{B}$ such that $i=x_2$,
 \begin{equation}
 \label{recursion_passage_left}
 T^\alpha_{B,\sigma_0} (i,j) =  \frac{Y_{i,j}}{\alpha(i)}+
 T^\alpha_{B,\sigma_0} (i+1,j-1),
 \end{equation}
for  $(i,j)\in\tilde{B}$ such that $i=x_1$.
In the special case \eqref{special_initial},
we have
\be
\label{old_bulk}
\tilde{B}  =  [x_1,x_2]\times\N
\quad \text{and} \quad 
\bar{B} = [x_1,x_2]\times(\Z\setminus\N).
\ee
By plugging \eqref{old_bulk} into \eqref{recursion_passage}, one recovers 
$$
T^\alpha_{B,\sigma_0}(i,j)=T^\alpha_B((x_1,0),(i,j)).
$$
where the r.h.s. was defined in \eqref{restricted_time}.
For notational simplicity, in the \textcolor{black}{sequel of this subsection}, we omit dependence on \textcolor{black}{$\alpha$}, $B$ and $\sigma_0$, and write $T(i,j)$ \textcolor{black}{instead of $T^\alpha_{B,\sigma_0}(i,j)$}.
The position of particle $j$ at time $t$, denoted by $\sigma_t(j)\in[x_1,x_2+1]$, is given by
\be
\label{position_time}
\sigma_t(j)  = \left\{
\ba{lll}
x_1 & \mbox{if} & T(x_1,j)>t\\
x_2+1 & \mbox{if} & T(x_2,j)\leq t\\
i\in[x_1+1,x_2]\cap\Z & \mbox{if} & T(i-1,j)\leq t<T(i,j)
\ea
\right.
\ee
\be
\label{time_position}
T(i,j)  =  \sup\{t\geq 0:\quad \sigma_t(j)\leq i\}.
\ee
The particle process $(\sigma_t)_{t\geq 0}$ is equivalent to the {\color{black} following growing cluster process:}
%
$$
\mathcal C_t:=\{(i,j)\in [x_1,x_2]\times\Z:\,T(i,j)\leq t\}=\{(i,j)\in B\times\Z:\,i<\sigma_t(j)\}
$$
with initial state $\mathcal C_0=\bar{B}$. One can proceed as in \cite{sepcg} to show that both processes are Markovian and that the undistinguishable particle process $(\eta_t)_{t\geq 0}$ 
{\color{black} defined by}
%
\be\label{occupation}
\eta_t(x):=\sum_{j\in\Z}\indicator{\{\sigma_t(j)=x\}}
\ee
is Markov with generator $L^\alpha_{B}$.

\subsection{Proof of Lemma \ref{lemma_reform}}
\  \\

{\color{black}

\noindent
\textbf{Step 0: proof of \eqref{def_mintime}.} 
Let $\Omega$ denote the set of sequences $(Y_{i,j})_{i,j\in\Z\times\N}$ equipped with
the product $\sigma$-algebra and the product $\mathcal E(1)$ probability measure. This measure is invariant for the family of shift operators $(\Theta_{n},\,n\in\N)$ defined on $\Omega$ by 
\be\label{def_bigshift}
(\Theta_{n}Y)_{i,j}:=Y_{i,j+n}.
\ee
We may view $T^\alpha_B((x,y),(x',y'))$ defined by \eqref{restricted_time} as a function on $\Omega$.
For $x_0\in B$, let 
$$
\widetilde{T}^\alpha_{B,x_0}(n):=T^\alpha_{B}(x_0,n).
$$
By \eqref{restricted_time}, the above function on $\Omega$ satisfies the superadditivity property
\begin{eqnarray}\label{superadditivity}
\widetilde{T}^\alpha_{B,x_0}(n+m) & \geq & T^\alpha_B((x_0,0),(x_0,n))+T^\alpha_B((x_0,n),(x_0,n+m))\nonumber\\
& = & \widetilde{T}^\alpha_{B,x_0}(n)+\widetilde{T}^\alpha_{B,x_0}(m)\circ\Theta_n.
\end{eqnarray}
Note that the last term on the third line of \eqref{superadditivity} coincides with the last term on the second line because the disorder is uniform along a column. Therefore a {\it quenched} subadditivity property holds for vertical passage-times, whereas it would not be true (nor would the stationarity be true) along other directions.
This superadditivity combined with Kingman's subadditive ergodic theorem (see \cite{kin}) implies \eqref{def_mintime}.\\ \\
}

\noindent
{\bf Step 1.} We prove that  definition \eqref{def_mintime} does not depend on $x_0$. Let $x_0,x'_0\in B$ with $x_0<x'_0$. Then \eqref{restricted_time} implies
\be\label{compare_times}
T^{\textcolor{black}{\alpha}}_B \big((x_0,0),(x_0,m+x'_0-x_0) \big)
\geq T^{\textcolor{black}{\alpha}}_B \big( (x'_0,0),(x'_0,m) \big) 
\geq T^{\textcolor{black}{\alpha}}_B \big((x_0,x'_0-x_0),(x_0,m) \big).
\ee
Since the sequence $(Y_{i,j}:\,i\in\Z,j\geq 0)$ is stationary with respect to shifts of $j$, the expectation of the last quantity is equal to that of
$T_B \big( (x_0,0),(x_0,m- x'_0 + x_0) \big)$. Thus taking expectations, dividing by $m$ and letting $m\to\infty$ yields the result.

\medskip

\noindent
{\bf Step 2.} {\it Proof of \eqref{max_current_lpp}.}
Given this statement, let us denote by $J^{\alpha,B}_x (t,\eta_0)$ the current up to time $t$ across site $x\in B'$, in the open system on $B'$, when starting from $\eta_0$. Assume $\eta_0$ is the occupation configuration associated with $\sigma_0$ via \eqref{special_initial}--\eqref{occupation}. Then it is clear that
$$
J^{\alpha,B}_x(t,\eta_0)=\min\{j\in\Z:\,T^{\alpha}_B(x,j)>t\}, 
$$
which implies the $\Prob$ a.s. limit
\begin{eqnarray*}
\lim_{t\to\infty}\frac{1}{t}J^{\alpha,B}_x(t,\eta_0) & = & \frac{1}{T_{\infty,B}}
 =  \lim_{t\to\infty}\Exp  \left( \frac{1}{t}J^{\alpha,B}_x(t,\eta_0)  \right)
 =   \lim_{t\to\infty} \Exp  \left( \frac{1}{t}\int_0^t j_x^{\alpha,B}(\eta_s)ds \right)\\
& = & \lim_{t\to\infty}\int j_x^{\alpha,B}(\eta)d\nu_t(\eta) 
=  \int j_x^{\alpha,B}(\eta)d\nu^\alpha_{B}(\eta),
\end{eqnarray*}
where
$$
j_x^{\alpha,B}(\eta)=\left\{
\ba{lll}
\alpha(x)\eta(x)[1-\eta(x+1)] & \mbox{if} & \textcolor{black}{x_1+1<x\leq x_2-1},\\
\alpha(x_2)\eta(x_2) & \mbox{if} & \textcolor{black}{x=x_2},\\
1-\eta(x_1+1) & \mbox{if} & \textcolor{black}{x=x_1+1},
\ea
\right.
$$
and 
$$
\nu_t:=\frac{1}{t}\int_0^t\delta_{\eta_0}e^{s L^\alpha_{B}}ds .
$$
The second equality follows from the fact that the family of random variables
 $(\frac{1}{t} J_x^{\alpha,B}(t,\eta_0))_{t\geq 0}$ is uniformly integrable, because 
 $\big( J_x^{\alpha,B}(t,\eta_0) \big)_{t\geq 0}$ is dominated in distribution by a Poisson random variable with parameter $t$.
The last equality follows from the fact that $\nu_t$ converges to the invariant measure $\nu^\alpha_{B}$ 
as $t$ tends to infinity.  \qed
%
%
\margincom{CH 31/10: enlev\'e l'appendice C preuve Lemme 2.1}
\section{Proof of Lemma \ref{cor_max}}\label{app:proof_cor_max}
\margincom{CH 31/10: ajout\'e preuve Lemme 7.3}
The proof of Lemma \ref{cor_max} relies on the following elementary estimates.
\begin{lemma}\label{lemma_normal}
\  \\
(i)
Let $Y$ be a random variable such that
$\Prob(Y\geq t)\leq Ce^{-t^2/V}$
for all $t\geq 0$, where $C\geq 1$ and $V>0$. Then, we have
$$
Y=\sqrt{V \log C}+\sqrt{V}X,
$$
where $\Prob(X\geq t)\leq e^{-t^2}$.

\medskip
\noindent
(ii) There exists a positive constant $A$ such that the following holds.
Let $(X_k)_{k=1,\ldots,n}$ be \textcolor{black}{independent} random variables such that
$\Prob(X_k\geq t)\leq e^{-t^2}$  for all $t\geq 0$,
and $(V_k)_{k=1,\ldots,n}$ be nonnegative numbers. 
Then 
$$
\sum_{k=1}^n \sqrt{V_k}X_k=\sqrt{\pi}\sum_{k=1}^n \sqrt{V_k}+\left(A\sum_{k=1}^n V_k\right)^{1/2}Z ,
$$
where $Z$ is a r.v. such that $\Prob(Z\geq t)\leq e^{-t^2}$  for all $t\geq 0$.
%
%
\end{lemma}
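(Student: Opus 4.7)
My plan for Lemma \ref{lemma_normal} is as follows. Part (i) I will prove by direct verification: set $X := (Y-\sqrt{V\log C})/\sqrt{V}$, so that $Y=\sqrt{V\log C}+\sqrt{V}X$ by construction; then for $t\geq 0$,
\[
\Prob(X\geq t) = \Prob\bigl(Y\geq \sqrt{V\log C}+t\sqrt{V}\bigr) \leq C\exp\!\left(-\tfrac{(\sqrt{V\log C}+t\sqrt V)^2}{V}\right) = \exp(-t^2-2t\sqrt{\log C})\leq e^{-t^2},
\]
using $C\geq 1$ and $t\geq 0$.

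For part (ii), I plan to pass to the positive parts $X_k^+ := \max(X_k,0)$ in order to apply a standard one-sided Chernoff bound. Integrating the tail gives $\Exp[X_k^+]\leq \int_0^\infty e^{-u^2}du = \sqrt{\pi}/2$. Setting $\tilde X_k := X_k^+ - \Exp[X_k^+]$, the variables $(\tilde X_k)$ are independent, centered, bounded below by $-\sqrt{\pi}/2$, and satisfy $\Prob(\tilde X_k\geq u)\leq e^{-u^2}$ for $u\geq 0$. Combining these three properties classically yields a universal bound $\Exp[e^{\lambda\tilde X_k}]\leq e^{K^2\lambda^2}$ for all $\lambda\in\R$ and some absolute $K$: for $\lambda\geq 0$ one integrates the subgaussian upper tail, while for $\lambda\leq 0$ one uses the deterministic lower bound $\tilde X_k\geq -\sqrt{\pi}/2$.

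The concentration step is then a standard Chernoff computation: for $s\geq 0$,
\[
\Prob\!\Big(\textstyle\sum_k \sqrt{V_k}\,\tilde X_k\geq s\Big) \leq \inf_{\lambda>0}e^{-\lambda s}\displaystyle\prod_{k}\Exp[e^{\lambda\sqrt{V_k}\tilde X_k}] \leq \exp\!\Big(-\tfrac{s^2}{4K^2\sum_k V_k}\Big).
\]
Setting $A:=4K^2$ and $s := t(A\sum_k V_k)^{1/2}$ yields the subgaussian tail $\Prob(\sum_k\sqrt{V_k}\tilde X_k\geq s)\leq e^{-t^2}$. Finally, using $X_k\leq X_k^+$ together with $\Exp[X_k^+]\leq\sqrt{\pi}/2\leq\sqrt{\pi}$, I deduce
\[
\Prob\!\Big(\textstyle\sum_k\sqrt{V_k}X_k \geq \sqrt{\pi}\displaystyle\sum_k\sqrt{V_k} + t(A\textstyle\sum_k V_k)^{1/2}\Big) \leq \Prob\!\Big(\textstyle\sum_k\sqrt{V_k}\tilde X_k\geq t(A\textstyle\sum_k V_k)^{1/2}\Big)\leq e^{-t^2},
\]
whence the decomposition follows by defining $Z$ to be the corresponding rescaled left-hand variable.

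The one subtle point is the uniform subgaussian MGF bound for $\tilde X_k$: the one-sided hypothesis $\Prob(X_k\geq t)\leq e^{-t^2}$ places no constraint at all on the lower tail of $X_k$, so working with $X_k$ directly is hopeless. Passing through the nonnegative variable $X_k^+$ is essential, because its positivity combined with the subgaussian upper tail automatically produces a two-sided subgaussian character for the centered $\tilde X_k$. The slight looseness $\sqrt{\pi}$ rather than $\sqrt{\pi}/2$ in the shift is precisely the slack needed to absorb the positive mean correction $\Exp[X_k^+]$ without tracking sharp constants in the Chernoff argument.
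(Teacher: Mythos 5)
Your proof is correct, and both parts of it reach the conclusion by the same general route as the paper (direct verification for (i), a Chernoff/MGF argument for (ii)). The main technical difference is in how you obtain the moment generating function bound in (ii), and here a remark you make is actually wrong and worth correcting.

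You claim that ``working with $X_k$ directly is hopeless'' because the hypothesis says nothing about the lower tail, and you therefore pass to $X_k^+ := \max(X_k,0)$, center it by $\Exp[X_k^+]\leq\sqrt{\pi}/2$, and argue that the resulting variable is two-sided subgaussian (bounded below plus subgaussian upper tail). This does work, but it is a detour: the paper bounds $\Exp\big[e^{\theta X_k}\big]$ directly, writing for $\theta\geq 0$
\[
\Exp\big[e^{\theta X_k}\big]\;\leq\; 1+\int_0^{\infty}\theta e^{\theta t}\,\Prob(X_k\geq t)\,dt \;\leq\; 1+\sqrt{\pi}\,\theta\,e^{\theta^2/4},
\]
where the first inequality uses $\Exp[W]=\int_0^\infty\Prob(W\geq s)\,ds$ for $W=e^{\theta X_k}\geq 0$ and bounds the contribution of $s\in[0,1]$ by $1$. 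The absence of any lower-tail control is harmless here precisely because, for $\theta\geq 0$, mass far to the left only makes $e^{\theta X_k}$ small, i.e.\ it can only \emph{decrease} the MGF. Since the Chernoff step only ever uses $\theta\geq 0$, there is no need for a two-sided subgaussian statement and hence no need for the $X_k^+$ device. The paper then sets $Y_k := X_k-\sqrt{\pi}$ and checks $\log\Exp\big[e^{\theta Y_k}\big]\leq A\theta^2/4$ for $\theta\geq 0$ by a small-$\theta$/large-$\theta$ comparison; this is the same step you attribute to a ``classical'' two-sided subgaussian fact, but done by hand in the one-sided form that is actually needed. Your version buys a cleaner conceptual statement (``$\tilde X_k$ is a genuine subgaussian variable''), at the cost of the extra intermediate object $X_k^+$ and a factor-of-two slack that turns out to be irrelevant; the paper's version is slightly shorter and makes clear that the one-sidedness of the hypothesis never becomes an obstacle.

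One minor point: in your sketch the step ``combining these three properties classically yields $\Exp[e^{\lambda\tilde X_k}]\leq e^{K^2\lambda^2}$'' is the part the paper actually computes explicitly. If you intend this to be a self-contained proof rather than an outline, you should either cite a specific equivalence (e.g.\ bounded-below $+$ subgaussian upper tail $\Rightarrow$ two-sided subgaussian tail with a universal prefactor $\Rightarrow$ MGF bound for the centered variable) or carry out the short Taylor/tail-integration computation as the paper does.
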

\textcolor{black}{
\begin{remark}
The random variables $Y$ and $Y_k$ in Lemma \ref{lemma_normal} need not have a definite sign.
\end{remark}
}
%
%
%
\begin{proof}[Proof of Lemma \ref{lemma_normal}]
Assertion (i) follows from an immediate computation. To obtain (ii) we note that, for $\theta\geq 0$, 
\begin{eqnarray*}
\Exp \big( e^{\theta X_k} \big)  \leq  1+\int_0^{+\infty}\theta e^{\theta t}\Prob(X_k\geq t)dt
\leq 1+\theta e^{\theta^2/4}\int_{-\theta/2}^{+\infty}e^{-t^2}dt\leq1+\sqrt{\pi}\theta e^{\theta^2/4} \, .
\end{eqnarray*}
Setting $Y_k=X_k-\sqrt{\pi}$, we have, for $\theta\geq 0$,
$$
\Lambda(\theta) := \log \Exp \big( e^{\theta Y_k} \big) 
\leq  \log \left[
1+\sqrt{\pi}\theta e^{\theta^2/4}
\right]-\sqrt{\pi}\theta.
$$
Thus there exists $A>0$ such that $\Lambda(\theta)\leq A\theta^2/4$ for $\theta\geq 0$. Hence, by independence of the random variables $X_k$, we get
$$
\log \Exp \left[ \exp \left( \theta \left(
\sum_{k=1}^n \sqrt{V_k}X_k-\sqrt{\pi}\sum_{k=1}^n\sqrt{V_k} \right) 
\right) \right]
\leq \frac{A}{4}{\theta^2}\sum_{k=1}^n V_k.
$$
{\color{black} The estimate on the tail of $Z$ follows by an exponential Markov inequality.}
\end{proof}


\begin{proof}[Proof of Lemma \ref{cor_max}]
By (ii) of Lemma \ref{lemma_normal}, for every $a\in\mathcal A$, we have
\be\label{normal_1}
\sum_{i\in\mathcal I} {\mathcal Y}_{a,i}=
\sum_{i\in\mathcal I}\Exp \big( {\mathcal Y}_{a,i} \big)+\sqrt{\pi}\sum_{i\in\mathcal I}\sqrt{V_{a,i}}+\left(
A\sum_{i\in\mathcal I}V_{a,i}
\right)^{1/2}Z_a,
\ee
where
%
$Z_a$ is a random variable satisfying $\Prob(Z_a\geq t)\leq e^{-t^2}$ for all $t\geq 0$. 
On the other hand, by Cauchy-Schwarz inequality,
\be
\label{cs_var}
\sum_{i\in\mathcal I}\sqrt{V_{a,i}}\leq\sqrt{|\mathcal I|}\left(\sum_{i\in\mathcal I}V_{a,i}\right)^{1/2}.
\ee
Thus, for every $a\in\mathcal A$,
\be\label{normal_2}
\sum_{i\in\mathcal I} {\mathcal Y}_{a,i}\leq m+(A\,V)^{1/2}Z^+_a,
\ee
where
\begin{eqnarray*}
V  := \max_{a\in\mathcal A}\sum_{i\in\mathcal I}V_{a,i}
\quad \text{and} \quad
m  :=  \sqrt{\pi}\sqrt{|\mathcal I|} \sqrt{V}
+\max_{a\in\mathcal A} \; \sum_{i\in\mathcal I} \Exp \big( Y_{a,i} \big) .
\end{eqnarray*}
\textcolor{black}{
(note that \eqref{normal_2} may be false with $Z_a$ instead of $Z_a^+$ if $Z_a<0$,
because the last term on the r.h.s. of \eqref{normal_1} is then greater than $(AV)^{1/2}Z_a$).
}
Next, for any $t\geq 0$, we have
\begin{eqnarray*}
\Prob\left(
\max_{a\in\mathcal A}\sum_{i\in\mathcal I} {\mathcal Y}_{a,i}\geq m+t
\right) & \leq & \Prob\left(
\bigcup_{a\in\mathcal A}\left\{
\sum_{i\in\mathcal I} {\mathcal Y}_{a,i}\geq m+t
\right\}
\right)\\
& \leq & 
\sum_{a\in\mathcal A}
\Prob
\left(
(A\,V)^{1/2}Z^+_a\geq t
\right)
\leq |\mathcal A| \, e^{-\frac{t^2}{AV}} .
\end{eqnarray*}
It follows from (i) of Lemma \ref{lemma_normal} that
$$
\max_{a\in\mathcal A}\sum_{i\in\mathcal I} {\mathcal Y}_{a,i}= m+\sqrt{\log |\mathcal A|}(A\,V)^{1/2} +(A\,V)^{1/2}Z,
$$
where $Z$ is a random variable satisfying  $\Prob(Z\geq t)\leq e^{-t^2}$ for all $t\geq 0$. The result then follows from
$$
\Exp(Z)\leq\Exp(Z^+)\leq\int_0^{+\infty}\Prob(Z^+\geq t)dt\leq \int_0^{+\infty}e^{-t^2}dt=\sqrt{\pi}.
$$
\end{proof}
\end{appendix}
\end{document}